\newcolumntype{C}{>{$}c<{$}}
\theoremstyle{plain}
\newtheorem{definitionenv}{Definition}
\newtheorem{lemmaenv}[definitionenv]{Lemma}
\newtheorem{theoremenv}[definitionenv]{Theorem}
\newtheorem{corollaryenv}[definitionenv]{Corollary}
\newtheorem{propositionenv}[definitionenv]{Proposition}
\newtheorem{conjectureenv}[definitionenv]{Conjecture}
\newtheorem{remarkenv}[definitionenv]{Remark}
\newenvironment{remark}{\begin{remarkenv}\rm}{\end{remarkenv}}
\newcommand{\br}{\begin{remark}}
	\newcommand{\er}{\end{remark}}
\newtheorem{exampleenv}{Example}
\newtheorem{app-lemmaenv}[section]{Lemma}
\newenvironment{definition}{\begin{definitionenv}\rm}{\end{definitionenv}}
\newenvironment{lemma}{\begin{lemmaenv}\rm}{\end{lemmaenv}}
\newenvironment{theorem}{\begin{theoremenv}\rm}{\end{theoremenv}}
\newenvironment{corollary}{\begin{corollaryenv}\rm}{\end{corollaryenv}}
\newenvironment{example}{\begin{exampleenv}\rm}{\end{exampleenv}}
\newenvironment{proposition}{\begin{propositionenv}\rm}{\end{propositionenv}}
\newenvironment{conjecture}{\begin{conjectureenv}\rm}{\end{conjectureenv}}
\newenvironment{app-lemma}{\begin{app-lemmaenv}\rm}{\end{app-lemmaenv}}
\newcommand{\bd}{\begin{definition}}
	\newcommand{\ed}{\end{definition}}
\newcommand{\bl}{\begin{lemma}}
	\newcommand{\el}{\end{lemma}}
\newcommand{\elp}{\hspace*{\fill} $\Box$
\end{lemma}}
\newcommand{\bt}{\begin{theorem}}
\newcommand{\et}{\end{theorem}}
\newcommand{\etp}{\hspace*{\fill} $\Box$
\end{theorem}}
\newcommand{\bc}{\begin{corollary}}
\newcommand{\ec}{\end{corollary}}
\newcommand{\ecp}{\hspace*{\fill} $\Box$
\end{corollary}}
\newcommand{\bcj}{\begin{conjecture}}
\newcommand{\ecj}{\end{conjecture}}
\newcommand{\be}{\begin{example}}
\newcommand{\ee}{\end{example}}
\newcommand{\eep}{\hspace*{\fill} $\Box$
\end{example}}
\newcommand{\bp}{\begin{proposition}}
\newcommand{\ep}{\end{proposition}}
\newcommand{\epp}{
\end{proposition}}
\newcommand{\eeq}{ \setcounter{equation} {\value{enumi}}}
\def\beq{\begin{equation}}
\def\eeq{\end{equation}}
\def\bean{\begin{IEEEeqnarray*}{rCl}}
\def\eean{\end{IEEEeqnarray*}}
\begin{document}
%
\title{RIP-based Performance Guarantee for Low Rank Matrix Recovery via $L_{*-F}$ Minimization}
%
%
%

\author{Yan Li
           and Liping Zhang
           \thanks{This work was supported in part by the National Natural Science
Foundation of China under Grant No. 12171271 (Corresponding author: Liping Zhang)}
\thanks{Yan Li and Liping Zhang are with the Department of Mathematical Sciences, Tsinghua University, Beijing 100084, China (email:li-yan20@mails.tsinghua.edu.cn; lipingzhang@tsinghua.edu.cn)}
 }
\maketitle

\begin{abstract}
In the undetermined linear system $\bm{b}=\mathcal{A}(\bm{X})+\bm{s}$, vector $\bm{b}$ and operator $\mathcal{A}$ are the known measurements and $\bm{s}$ is the unknown noise. In this paper, we investigate sufficient conditions for exactly reconstructing desired matrix  $\bm{X}$ being low-rank or approximately low-rank. We use the difference of nuclear norm and Frobenius norm ($L_{*-F}$) as a surrogate for rank function and establish a new nonconvex relaxation of such low rank matrix recovery, called the $L_{*-F}$ minimization, in order to approximate the rank function closer.   For such nonconvex and nonsmooth constrained $L_{*-F}$ minimization problems, based on whether the noise level is $0$, we give the upper bound estimation of the recovery error respectively. Particularly, in the noise-free case, one sufficient condition for exact recovery is presented. If linear operator $\mathcal{A}$ satisfies the restricted isometry property with $\delta_{4r}<\frac{\sqrt{2r}-1}{\sqrt{2r}-1+\sqrt{2}(\sqrt{2r}+1)}$, then $r$-\textbf{rank} matrix $\bm{X}$ can be exactly recovered without other assumptions. In addition, we also take insights into the regularized $L_{*-F}$ minimization model since such regularized model is more widely used in algorithm design. We provide the recovery error estimation of this regularized $L_{*-F}$ minimization model via RIP tool. To our knowledge, this is the first result on exact reconstruction of low rank matrix via regularized $L_{*-F}$ minimization.
\end{abstract}

\begin{IEEEkeywords}
Low rank matrix recovery, nonconvex optimization, nuclear norm, restricted isometry property, Frobenius norm
\end{IEEEkeywords}

%
\IEEEpeerreviewmaketitle

\section{Introduction}
%
%
%
%
\IEEEPARstart{L}{ow} rank matrix recovery (LMR) has been a rapidly growing filed of research in machine learning~\cite{chang2018unified}\cite{candes2010power}\cite{candes2012exact} and computer vision~\cite{candes2011robust}\cite{tomasi1993shape}. Mathematically, we hope to acquire the low rank matrix $\bm{X}^o$ satisfying:
\begin{equation}\label{no-noise}
    \bm{b}=\mathcal{A}(\bm{X}^o),
\end{equation}
where $\bm{b}$ is a given nonzero vector and $\mathcal{A}:\mathbb{R}^{m\times n}\rightarrow \mathbb{R}^l$ is a predesigned measurement linear operator. 
Hence LMR can be formulated as follows:
\begin{equation}\label{rank_pro}
    \begin{aligned}
        &\min_{\bm{X}}\ \textbf{rank}(\bm{X})\\
                &\bf{s.t.}\ \mathcal{A}(\bm{X})=\bm{b}.
    \end{aligned}
\end{equation}
 A particular class of~(\ref{rank_pro}) is to utilize a small number of observation entries to reconstruct matrix $\bm{X}$, referred as low rank matrix completion,  where $\mathcal{A}:=\mathcal{P}_{\Omega}$ the projection operator  samples entries from the index set $\Omega$. 
 Unfortunately, problem~(\ref{rank_pro}) is generally NP-hard and ill-posed~\cite{meka2008rank}, a famous convex surrogate function for rank function is the nuclear norm proposed by Fazel et al.~\cite{fazel2001rank} and they established a convex optimization problem over the same constraints: 
 \begin{equation}\label{mod_nuclear}
     \begin{aligned}
         &\min_{\bm{X}}\ \Vert \bm{X}\Vert_*\\
         &\textup{s.t.}\ \mathcal{A}(\bm{X})=\bm{b},
     \end{aligned}
 \end{equation}
where $\Vert \cdot\Vert_*$ equals the summation of  singular values. Note that the relaxation method is conceptually analogous to the relaxation from $
\ell_0$ norm to $\ell_1$ norm in compressing sensing~\cite{donoho2006compressed}. Many simple and computationally efficient optimization methods~\cite{cai2010singular}\cite{toh2010accelerated} exist to solve this type of nuclear norm  minimization problem. Variants of nuclear norm, including the truncated nuclear norm~\cite{zhang2012matrix} and weighted nuclear norm~\cite{gu2014weighted} were also proposed in the literature and enhance the recovery performance. Under some suitable conditions related to restricted
isometry property (RIP), problem~(\ref{mod_nuclear}) can be guaranteed to produce the the minimum-rank solution~\cite{recht2010guaranteed}. 

In this study, we focus on a non-convex surrogate for rank function, i.e., the difference of nuclear norm and Frobenius norm ($L_{*-F}$) to solve problem~(\ref{noise_AX}). Invoking the definition of Frobenius norm $\Vert \bm{X}\Vert_F:=\sqrt{\langle \bm{X}, \bm{X}\rangle}$, direct manipulations yield $\Vert \bm{X}\Vert_*-\Vert \bm{X}\Vert_F=\Vert \sigma(\bm{X})\Vert_1-\Vert \sigma(\bm{X})\Vert_2$. The contour plot of the $\Vert\cdot\Vert_1-\Vert\cdot\Vert_2$~($\ell_1-\ell_2$) metric presents in Figure \ref{fig:1}, 
\begin{figure}[t]
\centering
\subfigure{
 \includegraphics[width=2.5in]{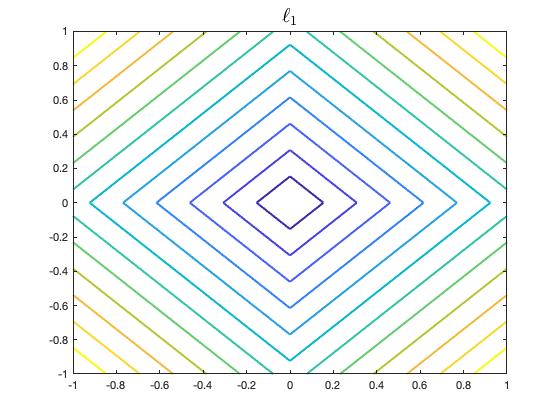}
}
\subfigure{
 \includegraphics[width=2.5in]{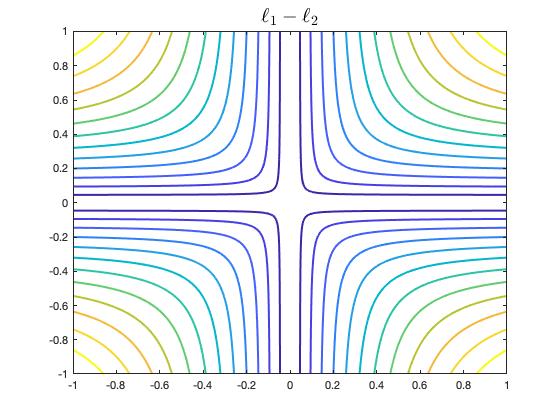}
}
\caption{\rm{From outside to inside, the value on the contour decreases gradually. It is notable that the solutions of $\ell_1-\ell_2$ 
approach two axes, when the value is close to $0$. Hence $\ell_1-\ell_2$ norm gets more sparse solutions than $\ell_1$ norm.}}\label{fig:1}
\end{figure}
which implies it can achieves the goal of sparsity and hence we can build a new nonconvex relaxation of the low rank matrix model (\ref{rank_pro}) as follows:
\begin{equation}
\min_{\bm{X}\in\mathbb{R}^{m\times n}}\Vert \bm{X}\Vert_*-\Vert \bm{X}\Vert_F\quad \textup{s.t.}\ \mathcal{A}(\bm{X})=\bm{b}.\label{exact_pro}
\end{equation}

In practice, since the measurement $\bm{b}$ is possibly contaminated by unknown noise $\bm{s}$, there produces a type of robustly recovering a low rank matrix in the form of 
\begin{equation}\label{noise_AX}
    \bm{b}=\mathcal{A}(\bm{X}^o)+\bm{s}.
\end{equation}
 Under this case, we can formulate as the following models, one minimizing the same function with~(\ref{exact_pro}) executes  the robust constraints to complete the low rank matrix recovery:
\begin{equation}
     \min_{\bm{X}\in\mathbb{R}^{m\times n}}\Vert \bm{X}\Vert_*-\Vert \bm{X}\Vert_F\quad \textup{s.t.}\ \Vert\mathcal{A}(\bm{X})-\bm{b}\Vert_2\leq\epsilon,\label{unexact_pro}
\end{equation}
where nonnegative parameter $\epsilon$ represents the noise level, and the other one is given by a sparsity regularized optimization problem:
\begin{equation}\label{unonstrainpro}
    \min_{\bm{X}\in\mathbb{R}^{m\times n}} \mathcal{J}(\bm{X}):= \Vert \bm{X}\Vert_*-\Vert \bm{X}\Vert_F+\frac{1}{2\lambda}\Vert\mathcal{A}(\bm{X})-b\Vert_2^2,
\end{equation}
where  $\lambda>0$ is a tradeoff hyperparameter. Both  problems (\ref{unexact_pro}) and (\ref{unonstrainpro}) belong to a special case of difference of convex function (DC) programming. For more details on DC programming and its algorithm implementation, see, e.g.,~\cite{tao1997convex}\cite{gong2013general}. 
In fact, no matter what function is chosen to replace the rank function, it is necessary to consider the recovery conditions and their resultant recovery error estimates. Subsequently, we will provide sufficient conditions to guarantee the robust reconstruction in bound of $\Vert\cdot\Vert_F$ or exact reconstruction of the desired low rank matrix $\bm{X}^o$ through above three minimization problems including noise setting ($\epsilon\neq0$) and noiseless context ($\epsilon=0$). 

For this purpose, One of the most commonly used tools  is the
RIP condition. The matrix version of RIP notion as defined in Definition~\ref{def_vec_rip}, which was first introduced by Cand$\grave{e}$s and Tao~\cite{candes2005decoding}, is widely used in sparse signal recovery~\cite{ge2020new}\cite{davenport2010analysis}\cite{chang2014improved}.\begin{definition}\label{def_vec_rip}
    For $T\subseteq\{1,\cdots,n\}$ and each number $r$, $r$-restricted isometry constants of matrix $\bm{A}$ is the smallest quantity $\delta_r$  such that  
    \begin{equation*}
        (1-\delta_r)\Vert \bm{x}\Vert_2^2\leq\Vert A_{T}\bm{x}\Vert^2_2\leq (1+\delta_r)\Vert \bm{x}\Vert_2^2
    \end{equation*}
    for all subsets $T$ with $\vert T\vert \leq r$ and all $\bm{x}\in\mathbb{R}^{\vert T\vert}$. The matrix $\bm{A}$ is said to satisfy the $r$-RIP with $\delta_r$.
\end{definition}
Inspired by this, Cand$\grave{e}$s and Plan~\cite{candes2010tight} introduced the isometry constants of a linear map $\mathcal{A}$, as defined in Definition~\ref{def_map_rip}. The linear map $\mathcal{A}$ is said to satisfy the RIP at rank $r$ if $\delta_r$ is bounded by a sufficiently small constant between $0$ and $1$. As they mentioned, fix $0\leq\delta<1$ and let $\mathcal{A}$ be a random measurement ensemble obeying the following condition: for any given $\bm{X}\in\mathbb{R}^{n_1\times n_2}$ and any fixed $0<\hat{t}<1$, $P(\vert \Vert\mathcal{A}(\bm{X})\Vert^2_2-\Vert \bm{X}\Vert^2_F\vert>\hat{t}\Vert \bm{X}\Vert_F^2)\leq c_1\exp(-c_2m)$ for fixed constants $c_1, c_2>0$. Then if $m\geq d_1nr$, $\mathcal{A}$ satisfies the RIP with isometry constant $\delta_r\leq \delta$ with probability exceeding $1-c_1e^{-d_2m}$ for fixed constants $n, d_1, d_2>0$. There is a rich literature providing a range of theoretical guarantees under which it is possible to recover a matrix based on the assumption that linear map $\mathcal{A}$ satisfies certain RIP conditions. See, e.g., ~\cite{cai2013sparse}\cite{tu2016low}\cite{bhojanapalli2016global}\cite{liu2022robust}. Many types of linear map, including random sensing designs~\cite{krahmer2011new}\cite{do2011fast}, are known to satisfy the RIP with high probability.

\begin{definition}\label{def_map_rip}
For each integer $r=1,2,\cdots,n$, the isometry constant $\delta_r$ of a linear map $\mathcal{A}$ is the smallest quantity such that
\begin{equation}\label{ripmap}
    (1-\delta_r)\Vert \bm{X}\Vert_F^2\leq\Vert\mathcal{A}(\bm{X})\Vert_2^2\leq(1+\delta_r)\Vert \bm{X}\Vert_F^2
\end{equation}
holds for all $r$-rank matrices (any matrix of rank no greater than $r$). 
\end{definition}
\subsection{Relation to Existing Work}
In the present study, some sufficient conditions based on the RIP analysis to guarantee the recovery of desired  matrices through the $L_{1-2}$ metric have been provided. Cai~\cite{cai2020minimization} gave a stably recovery condition on imposing an additional assumption on the dimension of desired matrix $\bm{X}^o$ besides the RIP condition. Let $\bm{X}^o$ satisfy~(\ref{noise_AX}) with $\Vert\bm{s}\Vert_2\leq\epsilon$, if there exists $r\leq\min\{m,n\}$ so that $\alpha(r):=(\frac{\sqrt{2r}-1}{\sqrt{r}+1})^2>1$ and the linear map $\mathcal{A}$ satisfies $\delta_{2r}+\alpha(r)\delta_{3r}<\alpha(r)-1$, the error estimation deriving from problem~(\ref{unexact_pro}) is bounded by $C_1\Vert \bm{X}^o-\bm{X}^o_r\Vert_F+C_2\epsilon$, where $C_1, C_2>0$ and $\bm{X}^o_r$ is the best rank-$r$ approximation matrix of $\bm{X}^o$. Hence in the noise-free case ($\epsilon=0$), $\bm{X}^o$ can be recovered exactly by the constrained optimization problem (\ref{exact_pro}), if $\bm{X}^o$ is $r$-rank matrix. Ma et al.~\cite{ma2017truncated} proposed a truncated $L_{1-2}$ metric and gave the theoretical guarantees to recovery the low rank matrix based on the corresponding constrained model. Note that $L_{1-2}$ metric is a special case for the truncated $L_{1-2}$ metric, hence following from ~\cite{ma2017truncated} it yields that for the $r$-rank matrix $\bm{X}^o$ satisfying (\ref{noise_AX}) with $\Vert\bm{s}\Vert_2\leq\epsilon$, the error estimation deriving from problem (\ref{unexact_pro}) is bounded by $C_3\epsilon$, where $C_3>0$. Hence in the noise-free case ($\epsilon=0$), $\bm{X}^o$ satisfying (\ref{no-noise}) can be recovered exactly by problem (\ref{exact_pro}). Above observation suggests that they can not provide a robust error estimation when the information about range of $\textbf{rank}(\bm{X}^o)$ is missing. There also exist other forms of characterizations for isometry constant of a linear map by replacing the vector $\ell_2$ norm with other vector norms, such as $\ell_p(0<p\leq 1)$ quasi norm~\cite{zhang2013restricted}. Under the framework of RIP with $p=1$, Guo et al.~\cite{guo2022low} presented a recovery guarantee through the truncated $L_{1-2}$ minimization and adopting $\ell_1$ constraints, naturally, the recovery estimation of $L_{1-2}$ minimization problem can be acquired. These works give the recovery estimation concerning $L_{1-2}$ minimization approach with constraints. In general, problem~(\ref{exact_pro}) and problem~(\ref{unexact_pro}) is not convenient to be solved in numerical implementation, the common choice is to solve a regularized variant~(\ref{unonstrainpro}) so that some algorithms for unconstrained minimization problem such as DCA~\cite{tao1997convex} and PG~\cite{boyd2004convex} can be adopted to complete the recovery of the low rank matrix~\cite{cai2020minimization}\cite{ma2017truncated}\cite{yao2016fast}. Hence it becomes very significant and necessary to develop some theoretical results for problem~(\ref{unonstrainpro}) and explore its relationship with constrained minimization problem. Moreover, since problem~(\ref{exact_pro}) is a surrogate optimization problem of NP hard problem~(\ref{rank_pro}), it is also necessary to establish the relation of optimal solutions among them. Motivated by above discussion, in this paper our contribution can be summarized as follows:
\begin{itemize}
    \item We update the recovery theory based on the RIP conditions of linear map $\mathcal{A}$ for constrained optimization problems~(\ref{exact_pro}) and~(\ref{unexact_pro}) to broaden the range of recoverable low rank matrix. Although authors in \cite{cai2020minimization} recently also give recovery theory based on the RIP conditions for these optimization problems, their recovery estimation is built for the desired matrix $\bm{X}^o$ satisfying $\min\{m,n\}\geq 24$ and ours breaks this restriction. Different from~\cite{ma2017truncated},  the recovery theory we proposed is still valid when the range of $\textbf{rank}(\bm{X}^o)$ is unclear.
    \item To the best of our knowledge, we are the first to provide the upper bound estimation of the approximate error for the regularized $L_{*-F}$ minimization problem~(\ref{unonstrainpro}). Actually, the existing theoretical investigation of problem~(\ref{unonstrainpro}) is limited to its induced algorithms and there is no theoretical guarantee of the regularized recovery estimation. We fill the blink of theoretical investigation to characterize its essential performance in robustly recovering the desired matrix $\bm{X}^o$ from~(\ref{noise_AX}). 
    \item The sufficient condition is provided to demonstrate that it is possible to recover the lowest rank solution exactly by minimizing the difference between nuclear norm and Frobenius norm. Moreover, we also briefly discuss the relation between the global minimizers of problems (\ref{unonstrainpro}) and (\ref{exact_pro}).
\end{itemize}
\subsection{Notation}
In this subsection, we introduce some related notations used throughout this paper. We present vectors by boldface lowcase letters, e.g., $\bm{a}$, matrices by boldface capital letters, e.g., $\bm{A}$, sets by capital letters, e.g., $A$, scalars by lowercase letters, e.g., $a$. For any positive integer $d$, $[d]$ denotes the index set $\{1,2,\cdots,d\}$. Given $\bm{X}\in\mathbb{R}^{m\times n}$. Define $t:=\min\{m,n\}$. Let $\sigma(\bm{X}):=(\sigma_1(\bm{X}),\cdots,\sigma_t(\bm{X}))$ be a vector composed of $\bm{X}$'s singular values with $\sigma_1(\bm{X})\geq\cdots\geq\sigma_t(\bm{X})\geq 0$. $\bm{E}\in\mathbb{R}^{t\times t}$ denotes the identity matrix. For an index set $S\subseteq [d]$, let $\vert S\vert$ denote its cardinality and  $S^c$ denote its complementarity set. Denote $\bm{X}_i$ as the $i$-th column of $\bm{X}$. Denote $\bm{X}_S$ as $\bm{X}$ with all but columns indexed by $S$ set to zero vector. For any vector $\bm{h}\in\mathbb{R}^n$ and any index subset $S\subseteq[n]$, we denote by $\bm{h}_S$ the vector whose entries $(\bm{h}_{S})_i=h_i$ for $i\in S$ and $0$ otherwise. Besides, we denote by $\bm{h}_{\max(k)}$ the vector $\bm{h}$ with all but the largest $k$ entries in absolute values set to $0$. The inner products of two vectors $\bm{x}, \bm{y}\in\mathbb{R}^n$ and two matrices $\bm{X}, \bm{Y}\in\mathbb{R}^{m\times n}$ are denoted by $\langle\bm{x},\bm{y}\rangle:=\bm{x}^{\top}\bm{y}$ and $\langle\bm{X},\bm{Y}\rangle:=\text{Tr}(\bm{X}^{\top}\bm{Y})$ where $\text{Tr}$ is the matrix trace. For any $\bm{x}\in\mathbb{R}^t$, we define the operator $\mathcal{D}:\mathbb{R}^{t}\rightarrow\mathbb{R}^{m\times n}$ as follows:
\begin{align*}
    \mathcal{D}_{ij}(\bm{x})=\left\{ 
    \begin{array}{cc}
       x_i\  &{\rm if}\ i=j,  \\
        0\  &{\rm otherwise}.
    \end{array}
    \right.
\end{align*}
Let $\mathcal{O}(n)$ be the group of $n\times n$ orthogonal matrices. For any given $\bm{X}\in\mathbb{R}^{m\times n}$, its singular value decomposition (SVD) is $\bm{X}=\bm{U}\mathcal{D}(\sigma(\bm{X}))\bm{V}^{\top}$ with $\bm{U}\in \mathcal{O}(m)$ and $V\in \mathcal{O}(n)$. Denote $\lfloor\cdot\rfloor$ and $\lceil\cdot\rceil$ as the symbols for floor function and ceiling function respectively. Let $\textup{supp}(\bm{x})$ denote the support of $\bm{x}$. We say two matrices $\bm{X}$ and $\bm{Y}$ in $\mathbb{R}^{m\times n}$ have simultaneous ordered SVD if there exist $\bm{U}_X, \bm{U}_Y\in\mathcal{O}^m$ and $\bm{V}_X, \bm{V}_Y\in\mathcal{O}^n$ such that $\bm{X}=\bm{U}_X\mathcal{D}(\sigma(\bm{X})){\bm{V}_X}^{\intercal}$ and $\bm{Y}=\bm{U}_Y\mathcal{D}(\sigma(\bm{Y}))\bm{V}_Y^{\intercal}$ with $\{{\bm{U}_X}_1, {\bm{U}_X}_2,\cdots, {\bm{U}_X}_m\}=\{{\bm{U}_Y}_1,{\bm{U}_Y}_2,\cdots, {\bm{U}_Y}_m\}$ and $\{{\bm{V}_X}_1,{\bm{V}_X}_2,\cdots,{\bm{V}_X}_n\}=\{{\bm{V}_Y}_1,{\bm{V}_Y}_2,\cdots,{\bm{V}_Y}_n\}$. That implies matrix $\bm{Y}$ can be rewritten as $\bm{U}_X\mathcal{D}(\bm{\pi}(\sigma(\bm{Y}))){\bm{V}_X}^{\intercal}$, where $\bm{\pi}_1(\sigma(\bm{Y})),\bm{\pi}_2(\sigma(\bm{Y})),\cdots,\bm{\pi}_t(\sigma(\bm{Y})) $ are some permutation of singular values of $\bm{Y}$. 

\section{Main Results on Constrained Minimization} \label{sec:learning_T}
In this section, we first show that it is possible to recover the lowest rank representation by solving a nonconvex optimization problem. And then, we present the recovery performance of $L_{1-2}$ minimization  model with constraints under the framework of RIP.
\subsection{Links Between Problem (\ref{rank_pro}) and Problem (\ref{exact_pro})}
The goal of this subsection is to study the links between the rank minimization problem (\ref{rank_pro}) and its nonconvex surrogate minimization (\ref{exact_pro}). In light of the characterization of locally sparse feasible solutions, we show the globally optimal solution of problem~(\ref{rank_pro}) must solve the problem~(\ref{exact_pro}) globally. 
\begin{definition}
Denote $\mathcal{F}:=\{\bm{X}\ \vert\ \mathcal{A}(\bm{X})=\bm{b}\}$. $X\in\mathcal{F}$ is called locally sparse if $\nexists Y\in\mathcal{F}\backslash\{\bm{X}\}$ such that $\bm{Y}$ and $\bm{X}$ have a simultaneous ordered SVD and $\textup{supp}(\bm{\pi}(\sigma(\bm{Y})))\subseteq \textup{supp}(\sigma(\bm{X}))$. Denote  $\mathcal{F}_L=\{\bm{X}\in\mathcal{F}\ \vert\  \bm{X} \textup{ is locally sparse}\}$ as the set of locally sparse feasible solutions.
\end{definition}
In fact, the locally sparse feasible solution is locally the sparsest feasible solution.
\begin{lemma}
    For any $\bm{X}\in\mathcal{F}_L$, there exists $\delta_X>0$ such that for any $\bm{Y}\in\mathcal{F}$ having simultaneous ordered SVD with $\bm{X} $, if $0<\Vert \bm{Y}-\bm{X} \Vert_F<\delta_X$, we have $\textup{supp}(\sigma(\bm{X} ))\subset \textup{supp}(\bm{\pi}(\sigma(\bm{Y})))$.
\end{lemma}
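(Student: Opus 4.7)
The plan is to reduce the problem to a scalar (singular-value) statement using the simultaneous ordered SVD structure, and then combine a one-line continuity argument with the locally sparse hypothesis.

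First I would exploit the simultaneous ordered SVD assumption to write both matrices with a common pair of orthogonal factors: fixing $\bm{X}=\bm{U}_X\mathcal{D}(\sigma(\bm{X}))\bm{V}_X^{\top}$, every admissible competitor $\bm{Y}$ admits the representation $\bm{Y}=\bm{U}_X\mathcal{D}(\bm{\pi}(\sigma(\bm{Y})))\bm{V}_X^{\top}$ recorded in the Notation subsection. Invoking orthogonal invariance of the Frobenius norm, this converts the matrix distance into the vector identity
\begin{equation*}
\|\bm{Y}-\bm{X}\|_F=\|\bm{\pi}(\sigma(\bm{Y}))-\sigma(\bm{X})\|_2,
\end{equation*}
which in particular controls every coordinate: $|\bm{\pi}_i(\sigma(\bm{Y}))-\sigma_i(\bm{X})|\le\|\bm{Y}-\bm{X}\|_F$ for each $i\in[t]$.

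Next I would choose $\delta_X$ explicitly. Since $\bm{b}\neq\bm{0}$ implies $\bm{X}\neq\bm{0}$, the rank $r:=\textbf{rank}(\bm{X})\ge 1$ and the smallest nonzero singular value $\sigma_r(\bm{X})$ is strictly positive. Taking $\delta_X:=\sigma_r(\bm{X})/2$ then forces $\bm{\pi}_i(\sigma(\bm{Y}))\ge \sigma_r(\bm{X})/2>0$ for every $i\in\textup{supp}(\sigma(\bm{X}))=\{1,\dots,r\}$, which establishes the non-strict inclusion $\textup{supp}(\sigma(\bm{X}))\subseteq\textup{supp}(\bm{\pi}(\sigma(\bm{Y})))$.

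The last step is to promote this to strict inclusion, and here I would argue by contradiction: were the two supports equal, then the reverse inclusion $\textup{supp}(\bm{\pi}(\sigma(\bm{Y})))\subseteq\textup{supp}(\sigma(\bm{X}))$ would also hold; combined with $\bm{Y}\in\mathcal{F}$, a simultaneous ordered SVD with $\bm{X}$, and $\bm{Y}\neq\bm{X}$ (guaranteed by $\|\bm{Y}-\bm{X}\|_F>0$), this would directly violate the defining property of $\mathcal{F}_L$. I do not anticipate a serious obstacle; the one point that deserves care is to verify, from the precise wording of ``simultaneous ordered SVD'', that a \emph{single} permutation synchronously relabels the columns of both $\bm{U}_X$ and $\bm{V}_X$ when passing from $\bm{X}$ to $\bm{Y}$, since it is this synchronization that legitimizes the reduction from the matrix problem to the purely scalar question about the vectors $\sigma(\cdot)$.
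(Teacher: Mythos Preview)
Your proposal is correct and follows essentially the same route as the paper: reduce $\|\bm{Y}-\bm{X}\|_F$ to $\|\bm{\pi}(\sigma(\bm{Y}))-\sigma(\bm{X})\|_2$ via the shared orthogonal factors, pick $\delta_X$ as (a fraction of) the smallest nonzero singular value of $\bm{X}$ to force $\textup{supp}(\sigma(\bm{X}))\subseteq\textup{supp}(\bm{\pi}(\sigma(\bm{Y})))$, and then upgrade to strict inclusion by invoking the definition of $\mathcal{F}_L$. The paper simply takes $\delta_X=\min_{i\in\textup{supp}(\sigma(\bm{X}))}\sigma_i(\bm{X})$ rather than half of it, which is an inessential difference.
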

\begin{proof}
 Choose $\delta_X=\min_{i\in\textup{supp}(\sigma(\bm{X}))}\{\sigma_i(\bm{X})\}$. For any $\bm{Y}\in\mathcal{F}$ having simultaneous ordered SVD with $\bm{X} $ such that $0<\Vert \bm{Y}-\bm{X} \Vert_F<\delta_X$, we set $\bm{Z}=\bm{X} -\bm{Y}$, that is $\bm{Z}=\bm{U}_X\mathcal{D}(\sigma(\bm{X} )-\bm{\pi}(\sigma(\bm{Y}))){\bm{V}_X}^{\intercal}$. For brevity, denote $\bm{z}$ as $\sigma(\bm{X} )-\bm{\pi}(\sigma(\bm{Y}))$, then we get
 \begin{equation*}
     \Vert \bm{z}\Vert_\infty\leq  \Vert \bm{z}\Vert_2<\min_{i\in\textup{supp}(\sigma(\bm{X} ))}\{\sigma_i(\bm{X} )\}.
 \end{equation*}
 This yields 
 \begin{equation*}
\begin{aligned}
\bm{\pi}_i(\sigma(\bm{Y}))&\geq\sigma_i(\bm{X})-\Vert \bm{z}\Vert_\infty\\
&>\sigma_i(\bm{X} )-\min_{i\in\textup{supp}(\sigma(\bm{X} ))}\{\sigma_i(\bm{X} )\}\geq 0
\end{aligned}
 \end{equation*}
 for any $i\in\textup{supp}(\sigma(\bm{X} ))$, which implies $\textup{supp}(\sigma(\bm{X} ))\subseteq\textup{supp}(\bm{\pi}(\sigma(\bm{Y})))$. Moreover, since $\bm{X}\in\mathcal{F}_L$, we have $\textup{supp}(\sigma(\bm{X} ))\neq \textup{supp}(\bm{\pi}(\sigma(\bm{Y})))$. Hence we obtain that $\textup{supp}(\sigma(\bm{X} ))\subset\textup{supp}(\bm{\pi}(\sigma(\bm{Y})))$. This completes the proof.
 \end{proof}
 The following results show that the optimal solution sets of problem~(\ref{exact_pro}) and problem~(\ref{rank_pro}) are contained in the locally sparse sets.
 \begin{lemma}\label{pd_gl}
    If $\bm{X}^*$ solves problem~(\ref{exact_pro}) globally, then $\bm{X}^*\in\mathcal{F}_L$.
    \end{lemma}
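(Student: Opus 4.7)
The plan is to argue by contradiction: assume $\bm{X}^*$ is a global minimizer of (\ref{exact_pro}) but $\bm{X}^*\notin\mathcal{F}_L$. Then there exists $\bm{Y}\in\mathcal{F}\setminus\{\bm{X}^*\}$ that shares a simultaneous ordered SVD with $\bm{X}^*$ and satisfies $\textup{supp}(\bm{\pi}(\sigma(\bm{Y})))\subseteq\textup{supp}(\sigma(\bm{X}^*))=:S$. Using the representation guaranteed by the simultaneous ordered SVD, I would write $\bm{X}^*=\bm{U}_X\mathcal{D}(\sigma(\bm{X}^*))\bm{V}_X^{\intercal}$ and $\bm{Y}=\bm{U}_X\mathcal{D}(\bm{\pi}(\sigma(\bm{Y})))\bm{V}_X^{\intercal}$, and consider the line segment $\bm{Z}_t:=\bm{X}^*+t(\bm{Y}-\bm{X}^*)=\bm{U}_X\mathcal{D}((1-t)\sigma(\bm{X}^*)+t\bm{\pi}(\sigma(\bm{Y})))\bm{V}_X^{\intercal}$, which lies in $\mathcal{F}$ for every $t\in\mathbb{R}$ by linearity of $\mathcal{A}$. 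The goal is to show that the objective $\phi(t):=\|\bm{Z}_t\|_*-\|\bm{Z}_t\|_F$ is strictly concave in a neighborhood of $t=0$, which immediately precludes $t=0$ from being a minimizer of $\phi$.

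The next step is to exploit the support containment and the positivity of $\sigma_i(\bm{X}^*)$ for $i\in S$ to pick $\epsilon>0$ so that, for every $|t|<\epsilon$, each diagonal entry $(1-t)\sigma_i(\bm{X}^*)+t\pi_i(\sigma(\bm{Y}))$ is nonnegative. On this neighborhood the singular values of $\bm{Z}_t$ are exactly these entries, so $\|\bm{Z}_t\|_*=(1-t)\|\bm{X}^*\|_*+t\|\bm{Y}\|_*$ is affine in $t$, while $\|\bm{Z}_t\|_F=\|(1-t)\sigma(\bm{X}^*)+t\bm{\pi}(\sigma(\bm{Y}))\|_2$ is the square root of a quadratic in $t$ whose discriminant, a brief computation shows, equals $4(\langle\sigma(\bm{X}^*),\bm{\pi}(\sigma(\bm{Y}))\rangle^{2}-\|\bm{X}^*\|_F^{2}\|\bm{Y}\|_F^{2})$. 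By Cauchy--Schwarz this discriminant is strictly negative unless $\sigma(\bm{X}^*)$ and $\bm{\pi}(\sigma(\bm{Y}))$ are proportional; but proportionality (together with $\bm{y}^*\geq 0$) would force $\bm{Y}=k\bm{X}^*$ for some $k>0$, and then $\mathcal{A}(\bm{Y})=k\bm{b}=\bm{b}$ with $\bm{b}\neq 0$ forces $k=1$, i.e., $\bm{Y}=\bm{X}^*$, contradicting $\bm{Y}\neq\bm{X}^*$. Hence $\|\bm{Z}_t\|_F$ is strictly convex on $(-\epsilon,\epsilon)$, and $\phi$ is affine minus strictly convex, i.e., strictly concave on $(-\epsilon,\epsilon)$.

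To finish, I invoke the fact that a strictly concave function on an open interval cannot attain a local minimum in the interior, so there exists $t^*\in(-\epsilon,\epsilon)$ with $\phi(t^*)<\phi(0)$. The corresponding $\bm{Z}_{t^*}\in\mathcal{F}$ then has strictly smaller objective than $\bm{X}^*$, contradicting the global optimality of $\bm{X}^*$; hence $\bm{X}^*\in\mathcal{F}_L$. The main obstacle I expect is the rigorous justification of the strict convexity of $\|\bm{Z}_t\|_F$ along this segment: this relies on ruling out proportionality of the two singular-value vectors, which is where the nonzero measurement constraint $\bm{b}\neq 0$ (and hence $\bm{X}^*\neq 0$) is essential. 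The support-containment hypothesis is critical as well, since it is what keeps $\|\bm{Z}_t\|_*$ affine---without it the nuclear norm would merely be concave, and the strict-concavity argument would collapse.
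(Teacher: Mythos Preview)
Your proof is correct and takes essentially the same approach as the paper: both argue by contradiction and use the simultaneous ordered SVD to move along the affine line through $\bm{X}^*$ and $\bm{Y}$ in $\mathcal{F}$, on which the nuclear norm is affine (the diagonal entries stay nonnegative) while the Frobenius norm is strictly convex (by non-proportionality of the singular-value vectors, which hinges on $\bm{b}\neq 0$). The paper packages this as a single convex combination $\bm{X}^*=\epsilon\bm{Y}^*+(1-\epsilon)\bm{Z}^*$ together with the strict triangle inequality for the Frobenius norm of linearly independent matrices, whereas you phrase it as strict concavity of $\phi(t)$ on a neighborhood of $t=0$; the underlying mechanism is identical.
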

\begin{proof}
    If $\bm{X}^*\notin\mathcal{F}_L$ and $\bm{X}^*=\bm{U}_{X^*}\mathcal{D}(\sigma(\bm{X}^*))\bm{V}_{X^*}^{\intercal}$, then there exists $Y^*\in\mathcal{F}\backslash\{\bm{X}^*\}$ such that $\bm{Y}^*=\bm{U}_{X^*}\mathcal{D}(\bm{\pi}(\sigma(\bm{Y}^*)))\bm{V}_{X^*}^{\intercal}$ and $\textup{supp}(\bm{\pi}(\sigma(\bm{Y})))\subseteq\textup{supp}(\sigma(\bm{X}^*))$. Hence we can find a small enough $\epsilon>0$ such that 
    \begin{equation*}
        \sigma(\bm{X}^*)-\epsilon\bm{\pi}(\sigma(\bm{Y}^*))\geq 0.
    \end{equation*}
    Define $\bm{Z}^*:=\bm{U}_{X^*}\mathcal{D}(\frac{\sigma(\bm{X}^*)-\epsilon\bm{\pi}(\sigma(\bm{Y}^*))}{1-\epsilon})\bm{V}_{X^*}^{\intercal}$. By directly calculating, it yields 
    \begin{equation*}
            \mathcal{A}(\bm{Z}^*)=\frac{1}{1-\epsilon}\mathcal{A}(\bm{X}^*)-\frac{\epsilon}{1-\epsilon}\mathcal{A}(\bm{Y}^*)=\bm{b}
    \end{equation*}
    and $\Vert \bm{Z}^*\Vert_*=\frac{1}{1-\epsilon}\Vert \bm{X}^*\Vert_*-\frac{\epsilon}{1-\epsilon}\Vert \bm{Y}^*\Vert_*$ due to the non-negativity of $\sigma_i(\bm{X}^*)-\epsilon\bm{\pi}_i(\sigma(\bm{Y}^*))$. Moreover, it follows from $\bm{Y}^*\neq \bm{X}^*$ and $\mathcal{A}(\bm{Y}^*)=\mathcal{A}(\bm{X}^*)$ that they are linearly independent, and hence $\bm{Y}^*$ and $\bm{Z}^*$ are linearly independent. This implies
    \begin{equation*}
        \Vert \bm{X}^*\Vert_F<\epsilon\Vert \bm{Y}^*\Vert_F+(1-\epsilon)\Vert \bm{Z}^*\Vert_F.
    \end{equation*}
Therefore, it is obvious that  
\begin{equation*}
\begin{aligned}
    &\Vert \bm{X}^*\Vert_*-\Vert \bm{X}\Vert_F\\
    >&\epsilon(\Vert \bm{Y}^*\Vert_*-\Vert \bm{Y}^*\Vert_F)+(1-\epsilon)(\Vert \bm{Z}^*\Vert_*-\Vert \bm{Z}^*\Vert_F)\\
    \geq&\min\{\Vert \bm{Y}^*\Vert_*-\Vert \bm{Y}^*\Vert_F, \Vert \bm{Z}^*\Vert_*-\Vert \bm{Z}^*\Vert_F\},
\end{aligned}
\end{equation*}
which contradicts with the optimality of $\bm{X}^*$.
\end{proof}
\begin{lemma}\label{p0_gl}
    If $\bm{X}^*$ solves problem~(\ref{rank_pro}) globally, then $\bm{X}^*\in\mathcal{F}_L$.
    \end{lemma}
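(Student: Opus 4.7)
The plan is to mimic the convexity-style strategy of Lemma~\ref{pd_gl} but with the rank function replacing the $L_{*-F}$ surrogate. I would argue by contradiction: assume $\bm{X}^*\notin \mathcal{F}_L$, so there exists $\bm{Y}^*\in\mathcal{F}\backslash\{\bm{X}^*\}$ with $\bm{Y}^* = \bm{U}_{X^*}\mathcal{D}(\bm{\pi}(\sigma(\bm{Y}^*)))\bm{V}_{X^*}^{\intercal}$ and $\textup{supp}(\bm{\pi}(\sigma(\bm{Y}^*)))\subseteq\textup{supp}(\sigma(\bm{X}^*))$. The argument then splits according to whether this support inclusion is strict or is an equality.

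In the strict case $\textup{supp}(\bm{\pi}(\sigma(\bm{Y}^*)))\subsetneq\textup{supp}(\sigma(\bm{X}^*))$, we immediately get $\textbf{rank}(\bm{Y}^*) = \vert\textup{supp}(\bm{\pi}(\sigma(\bm{Y}^*)))\vert < \vert\textup{supp}(\sigma(\bm{X}^*))\vert = \textbf{rank}(\bm{X}^*)$; since $\bm{Y}^*\in\mathcal{F}$, this alone contradicts the global optimality of $\bm{X}^*$ in~(\ref{rank_pro}) and no further construction is needed in this branch.

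In the equal-support regime $\textup{supp}(\bm{\pi}(\sigma(\bm{Y}^*)))=\textup{supp}(\sigma(\bm{X}^*))$, the matrix $\bm{Y}^*$ has the same rank as $\bm{X}^*$ and does not by itself yield a contradiction, so I would introduce the affine family
\begin{equation*}
\bm{Z}(t) = \bm{X}^* + t(\bm{Y}^* - \bm{X}^*) = \bm{U}_{X^*}\mathcal{D}\bigl((1-t)\sigma(\bm{X}^*) + t\bm{\pi}(\sigma(\bm{Y}^*))\bigr)\bm{V}_{X^*}^{\intercal},
\end{equation*}
which lies in $\mathcal{F}$ for every $t\in\mathbb{R}$ by linearity of $\mathcal{A}$ and the fact that $\mathcal{A}(\bm{X}^*)=\mathcal{A}(\bm{Y}^*)=\bm{b}$. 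Because $\bm{X}^*\neq\bm{Y}^*$ but both are displayed using the same $\bm{U}_{X^*}$ and $\bm{V}_{X^*}$, we must have $\sigma(\bm{X}^*)\neq\bm{\pi}(\sigma(\bm{Y}^*))$, hence some index $i\in\textup{supp}(\sigma(\bm{X}^*))$ satisfies $\sigma_i(\bm{X}^*)\neq\bm{\pi}_i(\sigma(\bm{Y}^*))$. For that $i$, the affine scalar $(1-t)\sigma_i(\bm{X}^*) + t\bm{\pi}_i(\sigma(\bm{Y}^*))$ vanishes at $t^* = \sigma_i(\bm{X}^*)/(\sigma_i(\bm{X}^*) - \bm{\pi}_i(\sigma(\bm{Y}^*)))\notin\{0,1\}$, so the $i$-th diagonal entry of $\bm{Z}(t^*)$ is zero while no entry outside $\textup{supp}(\sigma(\bm{X}^*))$ becomes nonzero. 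Reading the rank directly off the factorized form yields $\textbf{rank}(\bm{Z}(t^*))\leq\textbf{rank}(\bm{X}^*)-1$, contradicting the rank-optimality of $\bm{X}^*$.

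The argument is largely mechanical; the main subtlety is the case split, since in the equal-support branch $\bm{Y}^*$ alone is not a counterexample and one genuinely needs the one-parameter family $\bm{Z}(t)$ to kill an additional singular value. A small technical point to watch is that $t^*$ can be negative or exceed $1$, so the remaining diagonal entries of $\bm{Z}(t^*)$ need not be non-negative; however, positivity is irrelevant for the rank computation, because $\bm{Z}(t^*)$ is exhibited as $\bm{U}_{X^*}$ times a diagonal matrix times $\bm{V}_{X^*}^{\intercal}$ with $\bm{U}_{X^*},\bm{V}_{X^*}$ orthogonal, and the rank equals the number of nonzero diagonal entries regardless of their signs.
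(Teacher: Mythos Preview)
Your proof is correct and follows essentially the same route as the paper's: both argue by contradiction, dispose of the strict-inclusion case immediately, and in the equal-support case form an affine combination of $\bm{X}^*$ and $\bm{Y}^*$ in the shared $\bm{U}_{X^*},\bm{V}_{X^*}$ frame to annihilate a diagonal entry and drop the rank. The only cosmetic difference is that the paper selects its coefficient via the minimum ratio $r=\min_{i\in S}\sigma_i(\bm{X}^*)/\bm{\pi}_i(\sigma(\bm{Y}^*))<1$ so that every surviving diagonal entry of $\bm{Z}^*=\frac{1}{1-r}\bm{X}^*-\frac{r}{1-r}\bm{Y}^*$ stays nonnegative, whereas you zero an arbitrary differing index and (correctly) note that the signs of the remaining diagonal entries are irrelevant for the rank count.
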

    \begin{proof}
        If $\bm{X}^*\notin\mathcal{F}_L$ and $\bm{X}^*=\bm{U}_{X^*}\mathcal{D}(\sigma(X^*))\bm{V}_{X^*}^{\intercal}$, then there exists $\bm{Y}^*\in\mathcal{F}\backslash\{\bm{X}^*\}$ such that $\bm{Y}^*=\bm{U}_{X^*}\mathcal{D}(\bm{\pi}(\sigma(\bm{Y}^*)))\bm{V}_{X^*}^{\intercal}$ and $\textup{supp}(\bm{\pi}(\sigma(\bm{Y}^*)))\subseteq\textup{supp}(\sigma(\bm{X}^*))$. Since $\bm{X}^*$ is optimal, $\textup{supp}(\bm{\pi}(\sigma(\bm{Y}^*)))=\textup{supp}(\sigma(\bm{X}^*))$ and we denote such support set as $S$. 
        
        According to $\bm{X}^*\neq \bm{Y}^*$, it yields $\sigma(\bm{X}^*)\neq\bm{\pi}(\sigma(\bm{Y}^*))$ and hence $\min_{i\in S}\{\frac{\sigma_i(\bm{X}^*)}{\bm{\pi}_i(\sigma(\bm{Y}^*))}\}<1$ or $\min_{i\in S}\{\frac{\bm{\pi}_i(\sigma(\bm{Y}^*))}{\sigma_i(\bm{X}^*)}\}<1$ must true. Without loss of generality, let $\min_{i\in S}\{\frac{\sigma_i(\bm{X}^*)}{\bm{\pi}_i(\sigma(\bm{Y}^*))}\}=\frac{\sigma_k(\bm{X}^*)}{\bm{\pi}_k(\sigma(\bm{Y}^*))}=r<1$ for some $k\in S$. Then 
        \begin{equation*}
            \begin{aligned}
                \bm{Z}^*:=\frac{1}{1-r}\bm{X}^*-\frac{r}{1-r}\bm{Y}^*
                =\bm{U}_{X^*}\mathcal{D}(\frac{\sigma(\bm{X}^*)-r\bm{\pi}(\sigma(\bm{Y}^*))}{1-r})\bm{V}_{X^*}^{\intercal},
            \end{aligned}
        \end{equation*} 
which implies $\mathcal{A}(\bm{Z}^*)=\bm{b}$, that is, $\bm{Z}^*\in \mathcal{F}$. Moreover, denote $\bm{z}$ as $\frac{\sigma(\bm{X}^*)-r\bm{\pi}(\sigma(\bm{Y}^*))}{1-r}$, then $\bm{z}_k=0$ indicates $\textup{supp}(\sigma(\bm{Z}^*))\subsetneqq\textup{supp}(\sigma(\bm{X}^*))$, which contradicts with $\bm{X}^*$ being optimal solution of problem~(\ref{rank_pro}).
\end{proof}
Now, we are in the position to present one of our main recovery result.
     \begin{theorem}
        If $\bm{X}^*$ uniquely solves problem~(\ref{rank_pro}) with $\textup{rank}(\bm{X}^*)=s$ and $\min_{i\in \textup{supp}(\sigma(\bm{X}))}\sigma_i(\bm{X})>\frac{2(\sqrt{s}-1)}{\textup{rank}(\bm{X})-1}\Vert \bm{X}^*\Vert_F$ for any $\bm{X}\in\mathcal{F}_L\setminus\{\bm{X}^*\}$, then $\bm{X}^*$ also uniquely solves problem~(\ref{exact_pro}).
    \end{theorem}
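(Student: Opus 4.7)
The plan is to reduce the claim to a comparison inside the locally sparse set $\mathcal{F}_L$, then upper bound the $L_{*-F}$ value at $\bm{X}^*$ and lower bound it at any competitor, and see that the hypothesis on the smallest nonzero singular value is designed exactly to close the gap between these two bounds. First, since $\bm{X}^*$ globally solves (\ref{rank_pro}), Lemma \ref{p0_gl} gives $\bm{X}^* \in \mathcal{F}_L$, so it is a legitimate comparison point. Conversely, Lemma \ref{pd_gl} says every global minimizer of (\ref{exact_pro}) must also lie in $\mathcal{F}_L$. Therefore it suffices to show that for every $\bm{X} \in \mathcal{F}_L \setminus \{\bm{X}^*\}$ one has $\Vert\bm{X}\Vert_* - \Vert\bm{X}\Vert_F > \Vert\bm{X}^*\Vert_* - \Vert\bm{X}^*\Vert_F$.

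For the upper bound on the objective at $\bm{X}^*$, apply Cauchy--Schwarz to the $s$ nonzero singular values: $\Vert\bm{X}^*\Vert_* \leq \sqrt{s}\,\Vert\bm{X}^*\Vert_F$, which gives
\[
\Vert\bm{X}^*\Vert_* - \Vert\bm{X}^*\Vert_F \;\leq\; (\sqrt{s}-1)\Vert\bm{X}^*\Vert_F.
\]
For the lower bound at a competitor $\bm{X} \in \mathcal{F}_L \setminus \{\bm{X}^*\}$, write $\bm{x} := \sigma(\bm{X})$, $r := \textup{rank}(\bm{X})$, and $\tau := \min_{i \in \textup{supp}(\bm{x})} x_i$. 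Since the uniqueness of $\bm{X}^*$ as a rank minimizer forces $r \geq s+1 \geq 2$, the quantity $r-1$ is positive, matching the denominator in the hypothesis. The key identity is
\[
\Vert\bm{x}\Vert_1^2 - \Vert\bm{x}\Vert_2^2 \;=\; \sum_{i} x_i\bigl(\Vert\bm{x}\Vert_1 - x_i\bigr),
\]
and for each $i \in \textup{supp}(\bm{x})$ there are $r-1$ other nonzero entries, each at least $\tau$, so $\Vert\bm{x}\Vert_1 - x_i \geq (r-1)\tau$. This yields $\Vert\bm{x}\Vert_1^2 - \Vert\bm{x}\Vert_2^2 \geq (r-1)\tau\,\Vert\bm{x}\Vert_1$. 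Factoring the left side and using $\Vert\bm{x}\Vert_2 \leq \Vert\bm{x}\Vert_1$ (so $\Vert\bm{x}\Vert_1 + \Vert\bm{x}\Vert_2 \leq 2\Vert\bm{x}\Vert_1$), I obtain the clean bound
\[
\Vert\bm{X}\Vert_* - \Vert\bm{X}\Vert_F \;=\; \Vert\bm{x}\Vert_1 - \Vert\bm{x}\Vert_2 \;\geq\; \frac{(r-1)\tau}{2}.
\]

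To conclude, the hypothesis $\tau > \frac{2(\sqrt{s}-1)}{r-1}\Vert\bm{X}^*\Vert_F$ rearranges to $\frac{(r-1)\tau}{2} > (\sqrt{s}-1)\Vert\bm{X}^*\Vert_F$, so chaining with both bounds gives $\Vert\bm{X}\Vert_* - \Vert\bm{X}\Vert_F > \Vert\bm{X}^*\Vert_* - \Vert\bm{X}^*\Vert_F$ strictly. Combined with Lemma \ref{pd_gl}, this establishes that $\bm{X}^*$ is the unique global minimizer of (\ref{exact_pro}). I expect the main obstacle to be recognizing that the tightest universal lower bound on $\Vert\bm{x}\Vert_1 - \Vert\bm{x}\Vert_2$ in terms of the smallest nonzero coordinate is exactly linear in $(r-1)\tau$ (rather than something like $(r-1)\tau^2/\Vert\bm{x}\Vert_F$, which a first attempt via the $\ell_1^2 - \ell_2^2$ factorization would suggest); the identity above paired with the trivial inequality $\Vert\bm{x}\Vert_2 \leq \Vert\bm{x}\Vert_1$ is what produces the desired linear form that matches the hypothesis.
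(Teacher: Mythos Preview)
Your proof is correct and follows essentially the same route as the paper: reduce to $\mathcal{F}_L$ via Lemmas~\ref{pd_gl} and~\ref{p0_gl}, bound $\Vert\bm{X}^*\Vert_*-\Vert\bm{X}^*\Vert_F\le(\sqrt{s}-1)\Vert\bm{X}^*\Vert_F$ by Cauchy--Schwarz, and derive the lower bound $\Vert\bm{X}\Vert_*-\Vert\bm{X}\Vert_F\ge\frac{r-1}{2}\tau$ by writing $\Vert\bm{x}\Vert_1^2-\Vert\bm{x}\Vert_2^2=\sum_{i\neq j}x_ix_j\ge(r-1)\tau\Vert\bm{x}\Vert_1$ and dividing by $\Vert\bm{x}\Vert_1+\Vert\bm{x}\Vert_2\le 2\Vert\bm{x}\Vert_1$. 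Your explicit observation that uniqueness of $\bm{X}^*$ forces $r\ge s+1\ge 2$ (so the denominator $r-1$ is positive) is a detail the paper leaves implicit.
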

     \begin{proof}
     First, we will show that for $\bm{X}\in\mathbb{R}^{m\times n}$, $(\textup{rank}(\bm{X})-1)/2\min_{i\in \textup{supp}(\sigma(\bm{X}))}\sigma_i(\bm{X})\leq \Vert \bm{X}\Vert_*-\Vert \bm{X}\Vert_F\leq (\sqrt{\textup{rank}(\bm{X})}-1)\Vert \bm{X}\Vert_F$. This upper bound can be immediately obtained from the Cauchy-Schwartz inequality. Next, we will give the lower bound. 
     
        By directly calculating, it yields 
        \begin{equation*}
        \begin{aligned}
            \Vert \bm{X}\Vert_*-\Vert \bm{X}\Vert_F&=\frac{\Vert \bm{X}\Vert_*^2-\Vert \bm{X}\Vert_F^2}{\Vert \bm{X}\Vert_*+\Vert \bm{X}\Vert_F}\\
            &=\frac{\sum_{i\neq j}\sigma_i(\bm{X})\sigma_j(\bm{X})}{\Vert \bm{X}\Vert_*+\Vert \bm{X}\Vert_F}\\
            &=\frac{\sum_{i\neq j\in\textup{supp}(\sigma(\bm{X}))}\sigma_i(\bm{X})\sigma_j(\bm{X})}{\Vert \bm{X}\Vert_*+\Vert \bm{X}\Vert_F}\\
            &\geq\frac{(\textup{rank}(\bm{X})-1)\Vert \bm{X}\Vert_*\min_{i\in \textup{supp}(\sigma(\bm{X}))}\sigma_i(\bm{X})}{\Vert \bm{X}\Vert_*+\Vert \bm{X}\Vert_F}\\
            &=\frac{\textup{rank}(\bm{X})-1}{1+\frac{\Vert \bm{X}\Vert_F}{\Vert \bm{X}\Vert_*}}\min_{i\in \textup{supp}(\sigma(\bm{X}))}\sigma_i(\bm{X})\\
            &\geq \frac{\textup{rank}(\bm{X})-1}{2}\min_{i\in \textup{supp}(\sigma(\bm{X}))}\sigma_i(\bm{X}).
            \end{aligned}
        \end{equation*}
       Considering above discussion, we can directly obtain 
        \begin{equation*}
        \begin{aligned}
            \Vert \bm{X}^*\Vert_*-\Vert \bm{X}^*\Vert_F&\leq (\sqrt{s}-1)\Vert \bm{X}^*\Vert_F\\
            &<\frac{\textup{rank}(\bm{X})-1}{2}\min_{i\in \textup{supp}(\sigma(\bm{X}))}\sigma_i(\bm{X})\\
            &\leq \Vert \bm{X}\Vert_*-\Vert \bm{X}\Vert_F
         \end{aligned}  
        \end{equation*}
        for any $\bm{X}\in\mathcal{F}_L\setminus\{\bm{X}^*\}$, which makes sense according to \Cref{p0_gl}. Thus, the desired result follows directly from \Cref{pd_gl}.
    \end{proof}

\subsection{Exact Recovery Theory}
In this subsection, we obtain some theoretical results to guarantee the robust recovery through the constrained $L_{1-2}$ minimization problem~(\ref{exact_pro}) and problem~(\ref{unexact_pro}). Our main results not only provide the sufficient conditions of stably recovering the desired matrix $X^o$, but also characterize the recovery errors with these two approaches.
Before proceeding, we provide essential preliminaries and related facts which are helpful to derive stable recovery conditions. We begin with the following fundamental properties respect to the  function $\Vert \bm{X}\Vert_*-\Vert \bm{X}\Vert_F$.

\begin{lemma}\label{lema_ineq_1-2}
    Suppose $\bm{X}\in\mathbb{R}^{m\times n}\backslash\{0\}$ and $\textbf{rank}(\bm{X})=r$. Let  $\bm{U}\mathcal{D}(\sigma(\bm{X}))\bm{V}^{\top}$ be the SVD of $\bm{X}$, that is, $\bm{X}=\sum_{i=1}^t\sigma_i(\bm{X})\bm{U}_i\bm{V}_i^{\top}$. Denote $\Lambda:=\textup{supp}(\sigma(\bm{X}))$. Then, we have 
    \begin{enumerate}[(a)]
     \item  $(t-\sqrt{t})\sigma_t(\bm{X})\leq \Vert \bm{X}\Vert_*-\Vert \bm{X}\Vert_F\leq(\sqrt{t}-1)\Vert \bm{X}\Vert_F;$
        \item $(r-\sqrt{r})\sigma_r(\bm{X})\leq \Vert \bm{X}\Vert_*-\Vert \bm{X}\Vert_F\leq(\sqrt{r}-1)\Vert \bm{X}\Vert_F;$
       \item $\Vert \bm{X}\Vert_*-\Vert \bm{X}\Vert_F=0$ if and only if $r=1$.
    \end{enumerate}
\end{lemma}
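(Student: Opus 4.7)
The plan is to base everything on the elementary identity
\[
\|\bm{X}\|_*^{2}-\|\bm{X}\|_F^{2}
= \Bigl(\sum_{i\in\Lambda}\sigma_i(\bm{X})\Bigr)^{2}-\sum_{i\in\Lambda}\sigma_i(\bm{X})^{2}
= \sum_{i\neq j,\, i,j\in\Lambda}\sigma_i(\bm{X})\sigma_j(\bm{X}),
\]
which, after dividing by $\|\bm{X}\|_*+\|\bm{X}\|_F>0$, gives the factorization
\[
\|\bm{X}\|_*-\|\bm{X}\|_F
=\frac{\sum_{i\neq j,\,i,j\in\Lambda}\sigma_i(\bm{X})\sigma_j(\bm{X})}{\|\bm{X}\|_*+\|\bm{X}\|_F}.
\]
This is the same manipulation already used earlier in the paper, so I would just state it and reuse it.

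The two upper bounds are then immediate Cauchy--Schwarz applications on the singular-value vector: viewing $\sigma(\bm{X})\in\mathbb{R}^{t}$ gives $\|\bm{X}\|_*=\|\sigma(\bm{X})\|_1\le\sqrt{t}\,\|\sigma(\bm{X})\|_2=\sqrt{t}\,\|\bm{X}\|_F$, hence the upper bound in (a); restricting the sum to $\Lambda$ (of size $r$) tightens this to $\|\bm{X}\|_*\le\sqrt{r}\,\|\bm{X}\|_F$, which is the upper bound in (b).

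For the lower bound in (b), I would estimate numerator and denominator separately. For the numerator, for each fixed $i\in\Lambda$ the inner sum satisfies $\sum_{j\in\Lambda,\,j\neq i}\sigma_j(\bm{X})\ge(r-1)\sigma_r(\bm{X})$ since $\sigma_r(\bm{X})$ is the smallest value on $\Lambda$, so
\[
\sum_{i\neq j,\,i,j\in\Lambda}\sigma_i(\bm{X})\sigma_j(\bm{X})
\ge (r-1)\sigma_r(\bm{X})\,\|\bm{X}\|_*.
\]
For the denominator, the already-established inequality $\|\bm{X}\|_*\le\sqrt{r}\,\|\bm{X}\|_F$ rearranges to $\|\bm{X}\|_F\ge\|\bm{X}\|_*/\sqrt{r}$, so $\|\bm{X}\|_*+\|\bm{X}\|_F\le\|\bm{X}\|_*(\sqrt{r}+1)/\sqrt{r}$. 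Dividing yields
\[
\|\bm{X}\|_*-\|\bm{X}\|_F
\ge \frac{\sqrt{r}(r-1)}{\sqrt{r}+1}\sigma_r(\bm{X})
= \sqrt{r}(\sqrt{r}-1)\sigma_r(\bm{X})
= (r-\sqrt{r})\sigma_r(\bm{X}),
\]
which is exactly the constant claimed. The lower bound in (a) is then a one-line consequence: if $r=t$ it coincides with (b), and if $r<t$ then $\sigma_t(\bm{X})=0$ and there is nothing to prove beyond the trivial nonnegativity of $\|\bm{X}\|_*-\|\bm{X}\|_F$.

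For (c), the direction $r=1\Rightarrow\|\bm{X}\|_*=\|\bm{X}\|_F$ is immediate since only one singular value is nonzero. For the converse, the key identity shows $\|\bm{X}\|_*-\|\bm{X}\|_F=0$ forces $\sum_{i\neq j,\,i,j\in\Lambda}\sigma_i\sigma_j=0$; since all terms are nonnegative and $\sigma_i>0$ for $i\in\Lambda$, this is possible only when $|\Lambda|\le 1$, and $\bm{X}\neq 0$ rules out $|\Lambda|=0$, leaving $r=1$. I do not expect any real obstacle here; the only small subtlety is remembering to couple the Cauchy--Schwarz estimate with the right choice of support ($t$ versus $r$) so that the sharp constants $(\sqrt{t}-1)$, $(\sqrt{r}-1)$, $(t-\sqrt{t})$, $(r-\sqrt{r})$ fall out cleanly.
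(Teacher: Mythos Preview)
Your upper bounds and part (c) are fine, but the lower-bound argument contains a real error. You write that $\|\bm{X}\|_*\le\sqrt{r}\,\|\bm{X}\|_F$ ``rearranges to $\|\bm{X}\|_F\ge\|\bm{X}\|_*/\sqrt{r}$, so $\|\bm{X}\|_*+\|\bm{X}\|_F\le\|\bm{X}\|_*(\sqrt{r}+1)/\sqrt{r}$.'' The first rearrangement is correct, but it is a \emph{lower} bound on $\|\bm{X}\|_F$, so it cannot give an \emph{upper} bound on the denominator $\|\bm{X}\|_*+\|\bm{X}\|_F$; your inequality points the wrong way. In fact the conclusion you want, $\|\bm{X}\|_F/\|\bm{X}\|_*\le 1/\sqrt{r}$, is equivalent to $\|\bm{X}\|_*\ge\sqrt{r}\,\|\bm{X}\|_F$, the reverse of Cauchy--Schwarz, and is false in general.

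This is not a cosmetic slip: the factorization route with the numerator bound $\sum_{i\neq j}\sigma_i\sigma_j\ge(r-1)\sigma_r\|\bm{X}\|_*$ cannot reach the constant $(r-\sqrt{r})$. The only legitimate upper bound on the denominator in terms of $\|\bm{X}\|_*$ is $\|\bm{X}\|_*+\|\bm{X}\|_F\le 2\|\bm{X}\|_*$, which yields $\|\bm{X}\|_*-\|\bm{X}\|_F\ge\frac{r-1}{2}\sigma_r$. Since $(r-\sqrt{r})-\frac{r-1}{2}=\frac{(\sqrt{r}-1)^2}{2}>0$ for $r\ge 2$, this is strictly weaker than what the lemma asserts. (This weaker bound is exactly what the paper obtained earlier via the same manipulation.)

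The paper's proof of the lower bound uses a different idea: set $r_0=\lfloor\sqrt{t}\rfloor$ and prove the pointwise inequality
\[
\|\bm{X}\|_F\le\sum_{i=1}^{r_0}\sigma_i(\bm{X})+(\sqrt{t}-r_0)\sigma_{r_0+1}(\bm{X})
\]
by squaring both sides and using monotonicity of the $\sigma_i$. Subtracting from $\|\bm{X}\|_*$ then leaves $\sum_{i>r_0}\sigma_i-(\sqrt{t}-r_0)\sigma_{r_0+1}\ge(t-\sqrt{t})\sigma_t$. Part (b) follows by applying (a) to the rank-$r$ matrix supported on $\Lambda$. You will need an argument of this type (or another genuinely new estimate) to recover the sharp constant.
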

\begin{proof}
(a) We can easily get the supper bound from the Cauchy-Schwartz  inequality, and it suffices to give the lower bound. Define $r_0=\lfloor\sqrt{t}\rfloor$. We will give 
\begin{equation}\label{norm_F_inequ}
\begin{aligned}
    \Vert \bm{X}\Vert_F=&\sqrt{\sum_i\sigma_i(\bm{X})^2}
    \\\leq& \sum_{i=1}^{r_0}\sigma_i(\bm{X})+(\sqrt{t}-r_0)\sigma_{r_0+1}(\bm{X}).
    \end{aligned}
\end{equation}
To achieve this, let the left side of (\ref{norm_F_inequ}) be $\rho_l$ and the right side be $\rho_r$. On the one hand, we have
\begin{equation*}
\begin{aligned}
    \rho_l^2&=\sum_{i=1}^{r_0}\sigma_i(\bm{X})^2+\sum_{i=r_0+1}^{t}\sigma_i(\bm{X})^2\\
    &\leq \sum_{i=1}^{r_0}\sigma_i(\bm{X})^2+(t-r_0)\sigma_{r_0+1}(\bm{X})^2
\end{aligned}
\end{equation*}
and on the other hand by directly calculating, we obtain 
\begin{equation*}
\begin{aligned}
    \rho_r^2 
    =& \sum_{i=1}^{r_0}\sigma_i(\bm{X})^2+2(\sqrt{t}-r_0)\sigma_{r_0+1}(\bm{X})\sum_{i=1}^{r_0}\sigma_{i}(\bm{X})+\sum_{i=1}^{r_0}\sum_{\stackrel{j=1}{j\neq i}}^{r_0}\sigma_{i}(\bm{X})\sigma_{j}(\bm{X})
    +(\sqrt{t}-r_0)^2\sigma_{r_0+1}(\bm{X})^2\\
    \geq& \sum_{i=1}^{r_0}\sigma_i(\bm{X})^2+2(\sqrt{t}-r_0)\sigma_{r_0+1}(\bm{X})\sum_{i=1}^{r_0}\sigma_{r_0+1}(\bm{X})+\sum_{i=1}^{r_0}\sum_{\stackrel{j=1}{j\neq i}}^{r_0}\sigma_{r_0+1}(\bm{X})\sigma_{r_0+1}(\bm{X})
    +(\sqrt{t}-r_0)^2\sigma_{r_0+1}(\bm{X})^2\\
   =&\sum_{i=1}^{r_0}\sigma_i(\bm{X})^2+(t-r_0)\sigma_{r_0+1}(\bm{X})^2,
\end{aligned}
\end{equation*}
and hence $\rho_l\leq \rho_r$, which implies~(\ref{norm_F_inequ}).
Then it yields that  
\begin{equation*}
\begin{aligned}
    \Vert \bm{X}\Vert_*-\Vert \bm{X}\Vert_F
    \geq &\Vert \bm{X}\Vert_*- \sum_{i=1}^{r_0}\sigma_i(\bm{X})-(\sqrt{t}-r_0)\sigma_{r_0+1}(\bm{X})\\
    \geq &\sum_{i=r_0+1}^t \sigma_{i}(\bm{X})-(\sqrt{t}-r_0)\sigma_{r_0+1}(\bm{X})\\
    \geq & (t-\sqrt{t})\sigma_{t}(\bm{X}).
\end{aligned}
\end{equation*}
This yields the desired results.

(b) Note that $\vert\Lambda\vert=r$. Define $\hat{\bm{X}}:=\sum_{i\in\Lambda}\sigma_i(\bm{X})\bm{U}_i\bm{V}_i^{\top}$. Obviously, 
\begin{equation*}
    \Vert \bm{X}\Vert_*-\Vert \bm{X}\Vert_F=\Vert \hat{\bm{X}}\Vert_*-\Vert \hat{\bm{X}}\Vert_F,
\end{equation*}
hence we can apply (a) to get the desired results.

(c) If $\Vert \bm{X}\Vert_*-\Vert \bm{X}\Vert_F=0$, we can obtain $(r-\sqrt{r})\sigma_r(\bm{X})=0$ by employing the relation (a) and hence $r=1$. The other direction is easy.
\end{proof}
By a simple application of the parallelogram identity, we then have the next lemma, whose proof can be found in \cite{candes2011tight}. 
\begin{lemma}\label{inner_0}
    For all $\bm{X}, \bm{X}'$ obeying $\langle \bm{X},\bm{X}'\rangle=0$, and $\textbf{rank}(\bm{X})\leq r$, $\textbf{rank}(\bm{X}')\leq r'$, we have
    \begin{equation*}
        \vert\langle \mathcal{A}(\bm{X}),\mathcal{A}(\bm{X}')\rangle\vert\leq\delta_{r+r'}\Vert \bm{X}\Vert_F\Vert \bm{X}'\Vert_F,
    \end{equation*}
where $\delta_{r+r'}$ is $(r+r')$-isometry constant defined by Definition \ref{def_map_rip}.
\end{lemma}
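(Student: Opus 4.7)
The plan is to prove Lemma \ref{inner_0} via the standard parallelogram/polarization trick that underlies most RIP cross-term estimates. First I would reduce to the normalized case by observing that both sides of the target inequality are bi-homogeneous of degree $1$ in $\bm{X}$ and $\bm{X}'$, so after rescaling I may assume $\Vert\bm{X}\Vert_F = \Vert\bm{X}'\Vert_F = 1$. Then orthogonality $\langle\bm{X},\bm{X}'\rangle=0$ together with the parallelogram identity in Frobenius inner product immediately gives $\Vert\bm{X}\pm\bm{X}'\Vert_F^2 = \Vert\bm{X}\Vert_F^2+\Vert\bm{X}'\Vert_F^2 = 2$.

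Next I would invoke the subadditivity of rank, $\textbf{rank}(\bm{X}\pm\bm{X}')\leq \textbf{rank}(\bm{X})+\textbf{rank}(\bm{X}')\leq r+r'$, so that Definition \ref{def_map_rip} applies at level $r+r'$ to both $\bm{X}+\bm{X}'$ and $\bm{X}-\bm{X}'$. This yields
\begin{equation*}
2(1-\delta_{r+r'})\ \leq\ \Vert\mathcal{A}(\bm{X}+\bm{X}')\Vert_2^2\ \leq\ 2(1+\delta_{r+r'}),
\end{equation*}
and the analogous two-sided bound for $\Vert\mathcal{A}(\bm{X}-\bm{X}')\Vert_2^2$.

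The final step is the polarization identity on the image side, $\langle\mathcal{A}(\bm{X}),\mathcal{A}(\bm{X}')\rangle = \tfrac{1}{4}\bigl(\Vert\mathcal{A}(\bm{X}+\bm{X}')\Vert_2^2-\Vert\mathcal{A}(\bm{X}-\bm{X}')\Vert_2^2\bigr)$. Pairing the upper bound for the first summand with the lower bound for the second (and vice versa, to cover the absolute value) gives
\begin{equation*}
\bigl|\langle\mathcal{A}(\bm{X}),\mathcal{A}(\bm{X}')\rangle\bigr|\ \leq\ \tfrac{1}{4}\bigl(2(1+\delta_{r+r'})-2(1-\delta_{r+r'})\bigr)\ =\ \delta_{r+r'},
\end{equation*}
and undoing the normalization restores the factor $\Vert\bm{X}\Vert_F\Vert\bm{X}'\Vert_F$.

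I do not expect a real obstacle here: the argument is essentially mechanical and the only place where orthogonality of $\bm{X}$ and $\bm{X}'$ is used is to collapse $\Vert\bm{X}\pm\bm{X}'\Vert_F^2$ to the clean value $2$, so that the upper and lower RIP estimates telescope to exactly $\delta_{r+r'}$ after polarization. The only mild care needed is that the matrices $\bm{X}\pm\bm{X}'$ might vanish in degenerate cases, but then the claim is trivial, so this does not affect the argument.
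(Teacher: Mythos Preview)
Your proof is correct and follows precisely the approach the paper indicates: the paper does not give its own proof but remarks that the lemma follows ``by a simple application of the parallelogram identity'' and cites \cite{candes2011tight}, which is exactly the normalization--polarization argument you wrote out. There is nothing to add.
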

To show the main results, the following lemma is also necessary.
\begin{lemma}\label{mn_ineq}
Fix positive integer $r$ and $\hat{t}$. Let $m_1,m_2,n_1,n_2$ be nonnegative integers which satisfy $m_1+m_2=n_1+n_2=r$. Then for $k\in[\hat{t}]$, it holds
\begin{equation}
\begin{aligned}
  \min_{m_1,m_2,n_1,n_2}\max\{m_1+n_1+k,m_2+n_2+2k\}
=\max\{r+\lceil\frac{3}{2}k\rceil, 2k\}.  
\end{aligned}
\end{equation}
\end{lemma}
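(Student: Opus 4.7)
The whole statement is essentially a one-variable discrete optimization in disguise. My plan is to first use the constraints $m_1 + m_2 = n_1 + n_2 = r$ to write $m_2 + n_2 = 2r - (m_1 + n_1)$, and then introduce the single integer parameter $s := m_1 + n_1$. Since $m_1, n_1$ are nonnegative integers at most $r$, the admissible values of $s$ are exactly $\{0,1,\ldots,2r\}$ (for any such $s$, one may take $m_1 = \min\{s,r\}$, $n_1 = s - m_1$). The objective becomes
\begin{equation*}
f(s) := \max\{s + k,\; 2r - s + 2k\},
\end{equation*}
so the lemma is equivalent to proving $\min_{s \in \{0,1,\ldots,2r\}} f(s) = \max\{r + \lceil 3k/2 \rceil,\; 2k\}$.

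Next, I would analyze $f$ as a function on $\mathbb{R}$. The first argument is strictly increasing in $s$ and the second strictly decreasing, so $f$ is convex and piecewise linear, with its unique real minimizer at the crossing point $s^{\star} = r + k/2$, where $f(s^{\star}) = r + 3k/2$. I would then split into two cases. Case 1 ($k \le 2r$): then $s^{\star} \in [r, 2r] \subseteq [0,2r]$, so the constrained integer minimum is attained at one of $\lfloor s^{\star} \rfloor, \lceil s^{\star} \rceil$. A direct check shows that for even $k$ the value is $r + 3k/2$, and for odd $k$ both choices give $r + (3k+1)/2$; in either case the value is $r + \lceil 3k/2 \rceil$. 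Moreover $k \le 2r$ yields $r + \lceil 3k/2 \rceil \ge k/2 + 3k/2 = 2k$, so the claimed formula simplifies to $r + \lceil 3k/2 \rceil$, matching the computed minimum. Case 2 ($k > 2r$): now $s^{\star} > 2r$, so by monotonicity of $f$ on $[0, s^{\star}]$ the integer minimum is attained at the boundary $s = 2r$, giving $f(2r) = \max\{2r+k,\; 2k\} = 2k$. Here $r < k/2$ forces $r + \lceil 3k/2 \rceil \le \lfloor (k-1)/2 \rfloor + \lceil 3k/2 \rceil \le 2k$, so the claimed formula evaluates to $2k$, again matching.

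The combination of the two cases gives the stated identity. No step should be genuinely difficult: the only mildly fiddly part is the parity bookkeeping for odd $k$ in Case 1 when verifying that both rounded candidates produce the same value $r + \lceil 3k/2\rceil$, and the boundary inequality $r + \lceil 3k/2 \rceil \le 2k$ in Case 2, which I would handle by treating even and odd $k$ separately and using $r \le \lfloor k/2 \rfloor - \mathbf{1}_{k \text{ even}}$ or $r \le (k-1)/2$. Apart from that, the argument is a transparent discrete convex minimization.
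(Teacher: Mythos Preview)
Your proof is correct. You and the paper arrive at the same result but organize the argument differently. The paper first establishes the lower bound $\max\{m_1+n_1+k,\, m_2+n_2+2k\} \ge r + \lceil 3k/2\rceil$ from the observation that the two arguments sum to $2r+3k$, then treats $k \ge 2r$ by noting the second argument always dominates (so the minimum is $2k$ at $m_2=n_2=0$), and for $k < 2r$ writes down explicit formulas for $m_1,m_2,n_1,n_2$, separately for odd and even $k$, that achieve the lower bound. Your reduction to the single integer variable $s = m_1+n_1 \in \{0,\dots,2r\}$ and the convex piecewise-linear analysis of $f(s)=\max\{s+k,\,2r-s+2k\}$ is more streamlined: the case split arises naturally from whether the real minimizer $s^{\star}=r+k/2$ lies in $[0,2r]$, and the parity bookkeeping replaces the paper's explicit construction. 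Both arguments are elementary; yours is slightly more systematic and avoids writing out the minimizing tuple by hand.
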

\begin{proof}
    Obviously, it holds  
    \begin{equation}
    \max\{m_1+n_1+k,m_2+n_2+2k\}\geq r+ \lceil\frac{3}{2}k\rceil 
    \end{equation}
    for any $k\in [\hat{t}]$ since $m_1+n_1+k+n_1+n_2+2k=2r+3k$. 

    For $k\geq 2r$. Following from $m_1+n_1\leq 2r$, it is known that $\max\{m_1+n_1+k,m_2+n_2+2k\}=m_2+n_2+2k$ and then we have $\min_{m_1,m_2,n_1,n_2}\max\{m_1+n_1+k,m_2+n_2+2k\}=2k$ by setting $m_2=n_2=0$.
    
    For $k< 2r$. It is known that $r+\lceil\frac{3}{2}k\rceil\geq r+\frac{3}{2}k>2k$. In fact, we can show $\max\{m_1+n_1+k,m_2+n_2+2k\}= r+ \lceil\frac{3}{2}k\rceil$ by setting an appropriate $m_1, m_2, n_1, n_2$. If $k$ is odd, we set 
    \begin{equation*}
    \begin{aligned}
        &m_1=\lceil\frac{r}{2}+\frac{k}{4}+\frac{1}{4}\rceil;\\
        &m_2=r-\lceil\frac{r}{2}+\frac{k}{4}+\frac{1}{4}\rceil;\\
        &n_1=r+\frac{k}{2}+\frac{1}{2}-\lceil\frac{r}{2}+\frac{k}{4}+\frac{1}{4}\rceil;\\
        &n_2=\lceil\frac{r}{2}+\frac{k}{4}+\frac{1}{4}\rceil-\frac{k}{2}-\frac{1}{2}.
    \end{aligned}
    \end{equation*}
    If $k$ is even, we set 
    \begin{equation*}
        \begin{aligned}
            &m_1=\lceil\frac{r}{2}+\frac{k}{4}\rceil;\\
            &m_2=r-\lceil\frac{r}{2}+\frac{k}{4}\rceil;\\
            &n_1=r+\frac{1}{2}k-\lceil\frac{r}{2}+\frac{k}{4}\rceil;\\
            &n_2=\lceil\frac{r}{2}+\frac{k}{4}\rceil-\frac{k}{2}.
        \end{aligned}
    \end{equation*}
    This indicates that 
    \begin{equation*}
      \min_{m_1,m_2,n_1,n_2}\max\{m_1+n_1+k,m_2+n_2+2k\}= r+ \lceil\frac{3}{2}k\rceil. 
    \end{equation*}
    Combining the above two cases, we obtain the desired results.
\end{proof}
In view of above lemmas, we are ready to give our main results for recovery analysis via constrained  $L_{1-2}$ minimization problem. We first present the recovery estimation in the noise-free case, i.e.,  the vector $\bm{s}=0$ in~(\ref{noise_AX}). The proof is based on the block
decomposition of a matrix and the technique result of
Lemma~\ref{mn_ineq}. Denote $\bm{U}\mathcal{D}(\sigma(\bm{X}^o))\bm{V}^{\top}$ as SVD of $\bm{X}^o$. For any fixed positive integer $r\leq t$, the best rank-r approximation matrix $\bm{X}^o_r$ of $\bm{X}^o$ is defined as $\bm{U}\mathcal{D}(\bm{x}^r)\bm{V}^{\top}$ where $\bm{x}^r\in\mathbb{R}^t$ and $x^r_i=\sigma_i(\bm{X}^o)$ for $i\in[r]$ and $x^r_i=0$ otherwise.
\begin{theorem}\label{rip_exact}
   Let $\mathcal{A}:\mathbb{R}^{m\times n}\rightarrow\mathbb{R}^l$ be a linear map and $\bm{b}\in\mathbb{R}^l$. Set the desired matrix $\bm{X}^o$ with $\mathcal{A}(\bm{X}^o)=\bm{b}$. For a given positive integer $r\leq t$, define $\hat{t}=\min\{m-r, n-r\}$. If there exists positive integer $k\in[\hat{t}]\setminus\{1\}$ such that linear map $\mathcal{A}$ obeys $\delta_{2r+k}<1$ and
    \begin{equation}
      \beta:= \cfrac{\sqrt{2}\delta_{\max\{r+\lceil\frac{3}{2}k\rceil,2k\}}(\sqrt{2r}+1)}{(1-\delta_{2r+k})(\sqrt{k}-1)}<1,
    \end{equation}
 then we obtain that any optimal solution $\hat{\bm{X}}^*$ of problem~(\ref{exact_pro}) satisfies
 \begin{equation*}
     \Vert \bm{X}^o-\hat{\bm{X}}^*\Vert_F\leq \alpha\Vert \bm{X}^o-X_r^o\Vert_F,
 \end{equation*}
 where $\alpha:=\cfrac{2(\sqrt{2r}+1)+2\beta(\sqrt{k}-1)}{(\sqrt{k}-1)(1-\beta)(\sqrt{2r}+1)}$. Moreover, if $\bm{X}^o$ is $r$-rank matrix, then the unique minimizer of problem~(\ref{exact_pro}) is exactly $\bm{X}^o$.
\end{theorem}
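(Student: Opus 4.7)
\medskip
\noindent\textbf{Proof proposal.}

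The plan is a standard RIP-style argument, suitably adapted to the nonconvex $L_{*-F}$ objective. Let $\hat{\bm{X}}^{*}$ be any minimizer of (\ref{exact_pro}) and set $\bm{H}:=\hat{\bm{X}}^{*}-\bm{X}^{o}$. Since $\mathcal{A}(\hat{\bm{X}}^{*})=\mathcal{A}(\bm{X}^{o})=\bm{b}$, we have $\mathcal{A}(\bm{H})=\bm{0}$, which will let us apply Lemma~\ref{inner_0} later. Feasibility of $\bm{X}^{o}$ and optimality of $\hat{\bm{X}}^{*}$ give the starting inequality
\[
\|\bm{X}^{o}+\bm{H}\|_{*}-\|\bm{X}^{o}+\bm{H}\|_{F}\;\le\;\|\bm{X}^{o}\|_{*}-\|\bm{X}^{o}\|_{F},
\]
which I will manipulate (splitting $\bm{X}^{o}=\bm{X}^{o}_{r}+(\bm{X}^{o}-\bm{X}^{o}_{r})$ and using reverse/forward triangle inequalities for $\|\cdot\|_{*}$ together with Lemma~\ref{lema_ineq_1-2}) to obtain an upper bound on the nuclear-norm mass of $\bm{H}$ located outside the top block, in terms of $\|\bm{X}^{o}-\bm{X}^{o}_{r}\|_{F}$ and of the Frobenius mass of the top blocks.

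\medskip
\noindent
The second step is a block decomposition of $\bm{H}$ aligned with the SVDs of $\bm{X}^{o}$ and $\bm{H}$. I will partition the singular triples of $\bm{H}$ into blocks $\bm{H}_{T_{0}},\bm{H}_{T_{1}},\bm{H}_{T_{2}},\dots$ whose column and row supports are mutually orthogonal, so that $\langle\bm{H}_{T_{i}},\bm{H}_{T_{j}}\rangle=0$ whenever $i\ne j$ and $\mathrm{rank}(\bm{H}_{T_{0}}+\bm{H}_{T_{1}})$, $\mathrm{rank}(\bm{H}_{T_{j}})$ for $j\ge 2$ are controlled. The block sizes are chosen so that $\bm{H}_{T_{0}}$ captures the $r$ directions of $\bm{X}^{o}_{r}$, $\bm{H}_{T_{1}}$ is the $k$ largest tail singular directions, and $\bm{H}_{T_{j}}$ ($j\ge 2$) are further blocks of size $k$. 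Using $L_{*-F}$ optimality together with the sharp lower bound $\|\bm{Z}\|_{*}-\|\bm{Z}\|_{F}\ge(\mathrm{rank}(\bm{Z})-\sqrt{\mathrm{rank}(\bm{Z})})\sigma_{\min}(\bm{Z})$ (Lemma~\ref{lema_ineq_1-2}), I will derive a tail bound of the form
\[
\sum_{j\ge 2}\|\bm{H}_{T_{j}}\|_{F}\;\le\;\frac{1}{\sqrt{k}-1}\,\|\bm{H}_{T_{0}\cup T_{1}}\|_{F}+\frac{C}{\sqrt{k}-1}\,\|\bm{X}^{o}-\bm{X}^{o}_{r}\|_{F},
\]
which is the crucial cone-style inequality.

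\medskip
\noindent
The third step is the RIP/null-space conversion. From $\mathcal{A}(\bm{H})=\bm{0}$ one has $\|\mathcal{A}(\bm{H}_{T_{0}}+\bm{H}_{T_{1}})\|_{2}^{2}=-\sum_{j\ge 2}\langle \mathcal{A}(\bm{H}_{T_{0}}+\bm{H}_{T_{1}}),\mathcal{A}(\bm{H}_{T_{j}})\rangle$. Lower-bounding the left side via the $(2r+k)$-RIP and upper-bounding each inner product on the right by Lemma~\ref{inner_0}, one gets
\[
(1-\delta_{2r+k})\|\bm{H}_{T_{0}\cup T_{1}}\|_{F}^{2}\;\le\;\delta_{\rho}\,\|\bm{H}_{T_{0}\cup T_{1}}\|_{F}\sum_{j\ge 2}\|\bm{H}_{T_{j}}\|_{F},
\]
where $\rho$ is the maximum of the combined ranks appearing in the two pieces of the decomposition. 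Here Lemma~\ref{mn_ineq} enters: by choosing the split $(m_{1},m_{2},n_{1},n_{2})$ of the rank budget optimally, $\rho$ may be taken equal to $\max\{r+\lceil\tfrac{3}{2}k\rceil,2k\}$, which is exactly the RIP order appearing in the hypothesis $\beta<1$. Substituting the tail bound yields the scalar inequality that, after collecting constants $\sqrt{2},\sqrt{2r}+1,\sqrt{k}-1$ (which arise from splitting the nuclear tail and from $\|\bm{H}_{T_{0}\cup T_{1}}\|_{*}\le\sqrt{2r}\,\|\bm{H}_{T_{0}\cup T_{1}}\|_{F}$), produces exactly the constants $\beta$ and $\alpha$ in the statement. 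Finally $\|\bm{H}\|_{F}^{2}=\|\bm{H}_{T_{0}\cup T_{1}}\|_{F}^{2}+\sum_{j\ge 2}\|\bm{H}_{T_{j}}\|_{F}^{2}$ combined with the tail bound gives the claimed estimate. The "moreover" part is immediate: if $\mathrm{rank}(\bm{X}^{o})\le r$ then $\bm{X}^{o}=\bm{X}^{o}_{r}$ and the right-hand side vanishes, forcing $\hat{\bm{X}}^{*}=\bm{X}^{o}$, which also proves uniqueness.

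\medskip
\noindent
I expect the main obstacle to be the tail bound in step two. Unlike the convex nuclear-norm case where the inequality $\|\hat{\bm{X}}^{*}\|_{*}\le\|\bm{X}^{o}\|_{*}$ directly yields a clean cone condition, here the Frobenius subtraction complicates everything: one must pass between $\|\cdot\|_{*}-\|\cdot\|_{F}$ and $\|\cdot\|_{F}$ using the two-sided bounds of Lemma~\ref{lema_ineq_1-2}, and the blocks $\bm{H}_{T_{j}}$ have variable rank so the coefficients $(\sqrt{r_{j}}-1)$ and $(r_{j}-\sqrt{r_{j}})$ must be tracked carefully. Getting the clean factor $\sqrt{k}-1$ with the prefactor $\sqrt{2}(\sqrt{2r}+1)$ in the denominator of $\beta$ is precisely where the $L_{*-F}$-specific calculation must be done with care. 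The matching of the RIP order to $\max\{r+\lceil\tfrac{3}{2}k\rceil,2k\}$ via Lemma~\ref{mn_ineq} is then a combinatorial optimisation of the decomposition and should be routine.
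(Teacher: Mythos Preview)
Your overall strategy matches the paper's: use optimality to derive a cone-type bound on the tail mass of $\bm{H}$, decompose into a head plus $k$-sized tail blocks, apply RIP through Lemma~\ref{inner_0}, and optimize the RIP order with Lemma~\ref{mn_ineq}. But your description of the block decomposition has a gap that, as written, would prevent you from reaching the claimed RIP order $\max\{r+\lceil\tfrac{3}{2}k\rceil,2k\}$.

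The decomposition is \emph{not} by singular triples of $\bm{H}$; it is a two-stage construction in the SVD basis $(\bm{U},\bm{V})$ of $\bm{X}^{o}$. First, $\bm{U}^{\top}\bm{H}\bm{V}$ is written as a $3\times 3$ block matrix with row sizes $m_1,m_2,m_3$ and column sizes $n_1,n_2,n_3$ where $m_1+m_2=n_1+n_2=r$; this yields mutually orthogonal pieces $\bm{Z}_1$ (first block-row and block-column, rank $\le m_1+n_1$), $\bm{Z}_2$ (the middle cross, rank $\le m_2+n_2$), and $\bm{Z}_3$ (the $(3,3)$ corner), with $\bm{X}^{o}_r\bm{Z}_3^{\top}=0=(\bm{X}^{o}_r)^{\top}\bm{Z}_3$ so that the nuclear-norm identity $\|\bm{X}^{o}_r+\bm{Z}_3\|_*=\|\bm{X}^{o}_r\|_*+\|\bm{Z}_3\|_*$ holds. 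Only the corner $\bm{Z}_3$ is then decomposed via its own SVD into $k$-sized pieces $\bm{Z}_{T_1},\bm{Z}_{T_2},\dots$. The key point is that the head is \emph{already} split into two orthogonal parts $\bm{Z}_1,\bm{Z}_2$ with adjustable ranks---it is not a single block $\bm{H}_{T_0}$.

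This matters precisely at the RIP step. One does not bound $\langle\mathcal{A}(\bm{Z}_1+\bm{Z}_2+\bm{Z}_{T_1}),\mathcal{A}(\bm{Z}_{T_i})\rangle$ directly (that would cost $\delta_{2r+2k}$, not the stated order); instead one splits it as $\langle\mathcal{A}(\bm{Z}_1),\mathcal{A}(\bm{Z}_{T_i})\rangle+\langle\mathcal{A}(\bm{Z}_2+\bm{Z}_{T_1}),\mathcal{A}(\bm{Z}_{T_i})\rangle$, applies Lemma~\ref{inner_0} to each summand to get $\delta_{m_1+n_1+k}$ and $\delta_{m_2+n_2+2k}$, and then uses $\delta_{r_1}\|\bm{Z}_1\|_F+\delta_{r_2}\|\bm{Z}_2+\bm{Z}_{T_1}\|_F\le\sqrt{2}\,\delta_{r_3}\|\bm{Z}_1+\bm{Z}_2+\bm{Z}_{T_1}\|_F$ with $r_3=\max\{r_1,r_2\}$ (this is the source of the $\sqrt{2}$ in $\beta$). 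Lemma~\ref{mn_ineq} then minimizes $r_3$ over the free integers $m_1,m_2,n_1,n_2$. Your monolithic head $\bm{H}_{T_0}$ admits no such split, so you would be stuck at $\delta_{2r+2k}$; you mention the $(m_1,m_2,n_1,n_2)$ split in passing, but it has to be built into the decomposition from the start, not applied after the fact. Finally, note that the tail coefficient in your displayed cone inequality should be $(\sqrt{2r}+1)/(\sqrt{k}-1)$, not $1/(\sqrt{k}-1)$: it comes from $\|\bm{Z}^r\|_*+\|\bm{Z}^r\|_F\le(\sqrt{2r}+1)\|\bm{Z}^r\|_F$ after the optimality inequality is processed.
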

\begin{proof}
Define $\bm{Z}:=\hat{\bm{X}}^*-\bm{X}^o$ and $\bm{X}^o_{r_c}:=\bm{U}\mathcal{D}(\bm{x}^{r_c})\bm{V}^{\top}$ where $x^{r_c}_i=\sigma_i(\bm{X}^o)$ for $i\in[t]\backslash[r]$ and $x^{r_c}_i=0$ otherwise. Obviously, $\bm{X}^o=\bm{X}^o_{r}+\bm{X}^o_{r_c}$. 
Fix any nonnegative integer $m_i, n_i$ ($i\in[3]$) satisfying $m_1+m_2=n_1+n_2=r$, $m_1+m_2+m_3=m$ and $n_1+n_2+n_3=n$. Denote
\begin{equation*}
r_1:=m_1+n_1+k,~~ r_2:=m_2+n_2+2k,~~ r_3=\max\{r_1,r_2\}.
\end{equation*}

In the sequel, we get a block decomposition of $\bm{Z}$ with respect to $\bm{X}^o$ as follows:
\begin{equation}
    \bm{U}^{\top}\bm{Z} \bm{V}:=
 \begin{pmatrix}
        \bm{Z}_{11} & \bm{Z}_{12} &\bm{Z}_{13} \\
        \bm{Z}_{21} & \bm{Z}_{22} &\bm{Z}_{23} \\
        \bm{Z}_{31} & \bm{Z}_{32} &\bm{Z}_{33}
\end{pmatrix}
\end{equation}
with $\bm{Z}_{ij}\in\mathbb{R}^{m_j\times n_j}$ ($i,j\in[3]$). 
Define 
 \begin{equation*}
 \bm{Z}_1:=\bm{U}\left(
     \begin{array}{ccc}
     \bm{Z}_{11} & \bm{Z}_{12} & \bm{Z}_{13}\\
     \bm{Z}_{21} & 0 & 0\\
     \bm{Z}_{31} & 0 & 0 
     \end{array}
     \right)
     \bm{V}^{\top},\quad
     \bm{Z}_2:=\bm{U}\left(
     \begin{array}{ccc}
     0 & 0 & 0\\
     0 & \bm{Z}_{22} & \bm{Z}_{23}\\
     0 & \bm{Z}_{32} & 0 
     \end{array}
     \right)
     \bm{V}^{\top},\quad
        \bm{Z}_3:=\bm{U}\left(
     \begin{array}{ccc}
     0 & 0 & 0\\
     0 & 0 & 0\\
     0 & 0 & \bm{Z}_{33} 
     \end{array}
     \right)
     \bm{V}^{\top}.
 \end{equation*}
Naturally, we can decompose $\bm{Z}$ as 
\begin{equation}
    \bm{Z}=\bm{Z}_1+\bm{Z}_2+\bm{Z}_3=\bm{Z}^r+\bm{Z}^{r_c},
\end{equation}
 where $\bm{Z}^r:=\bm{Z}_1+\bm{Z}_2$, $\bm{Z}^{r_c}:=\bm{Z}_3$. 
  It is known that $\textbf{rank}(\bm{Z}_1)\leq m_1+n_1$, $\textbf{rank}(\bm{Z}_2)\leq m_2+n_2$ and $\textbf{rank}(\bm{Z}_1+\bm{Z}_2)\leq 2r$. Also, $\bm{Z}_1$, $\bm{Z}_2$ and $\bm{Z}_3$ are orthogonal each other. Then, we have 
\begin{equation*}
\begin{aligned} 
   &\Vert \bm{X}^o+\bm{Z}\Vert_*-\Vert \bm{X}^o+\bm{Z}\Vert_F\\ 
   =&\Vert \bm{X}^o_r+\bm{X}^o_{r_c}+\bm{Z}^r+\bm{Z}^{r_c}\Vert_*-\Vert \bm{X}^o+\bm{Z}^r+\bm{Z}^{r_c}\Vert_F\\
   \geq& \Vert \bm{X}^o_r+\bm{Z}^{r_c}\Vert_*-\Vert \bm{X}^o_{r_c}\Vert_*-\Vert \bm{Z}^r\Vert_*-\Vert \bm{X}^o\Vert_F
   -\Vert \bm{Z}^r\Vert_F-\Vert \bm{Z}^{r_c}\Vert_F\\
   =& \Vert \bm{X}^o_r\Vert_*+\Vert \bm{Z}^{r_c}\Vert_*-\Vert \bm{X}^o_{r_c}\Vert_*-\Vert \bm{Z}^r\Vert_*-\Vert \bm{X}^o\Vert_F-\Vert \bm{Z}^r\Vert_F-\Vert \bm{Z}^{r_c}\Vert_F.
\end{aligned}
\end{equation*}
Here the last equality follows from \cite[Lemma 2.3]{recht2010guaranteed} with $\bm{X}^o_r {\bm{Z}^{r_c}}^{\top}=0$ and ${\bm{X}^o_r}^{\top}\bm{Z}^{r_c}=0$. Besides, we can get 
\begin{equation*}
    \Vert \hat{\bm{X}}^*\Vert_*-\Vert \hat{\bm{X}}^*\Vert_F\leq \Vert \bm{X}^o\Vert_*-\Vert \bm{X}^o\Vert_F
\end{equation*}
due to the optimality of $\hat{\bm{X}}^*$. Hence we  obtain 
\begin{equation}\label{zrc_ineq}
    \Vert \bm{Z}^{r_c}\Vert_*-\Vert \bm{Z}^{r_c}\Vert_F\leq \Vert \bm{Z}^r\Vert_*+\Vert \bm{Z}^r\Vert_F+2\Vert \bm{X}^o_{r_c}\Vert_*.
\end{equation}
Denote $\bm{P}\mathcal{D}(\sigma(\bm{Z}_{33}))\bm{Q}^{\top}$ as the SVD of $\bm{Z}_{33}$ where $\sigma(\bm{Z}_{33})\in\mathbb{R}^{\hat{t}}$ and $\bm{P}\in\mathbb{R}^{(m-r)\times(m-r)}$, $\bm{Q}\in\mathbb{R}^{(n-r)\times(n-r)}$ are orthogonal matrices. We begin dividing $[\hat{t}]$ into subsets of size $k (1< k\leq \hat{t})$, that is 
\begin{equation*}
    [\hat{t}]=T_1\cup T_2\cup\cdots\cup T_h,
\end{equation*}
 where each $T_i$ contains $k$ indices probably except $T_h$, and $T_1$ contains the indices of $k$ largest coefficients of $\sigma(\bm{Z}_{33})$, $T_2$ contains the indices of  the next $k$ largest coefficients, and so on. Define 
 \begin{equation}
      \bm{Z}_{T_i}:=\bm{U}\left(
 \begin{array}{ccc}
    0  &  0 & 0\\
    0  &  0 & 0\\
    0  &  0 & \bm{P}\mathcal{D}(\sigma_{T_i}(\bm{Z}_{33}))\bm{Q}^{\top}
 \end{array}
\right)\bm{V}^{\top}
 \end{equation}
for $1\leq i\leq h$. Hence $\bm{Z}_1$, $\bm{Z}_2$, $\bm{Z}_{T_i}$ are all orthogonal to each other and $\textbf{rank}(\bm{Z}_{T_i})\leq k$. Then, on the one hand, we obtain 
\begin{equation*}
\begin{aligned}
    &\Vert\mathcal{A}(\bm{Z}_1+\bm{Z}_2+\bm{Z}_{T_1})\Vert_2^2\\
    =&\langle\mathcal{A}(\bm{Z}_1+\bm{Z}_2+\bm{Z}_{T_1}),\mathcal{A}(\bm{Z}_1+\bm{Z}_2+\bm{Z}_{T_1})\rangle\\
    =&\langle\mathcal{A}(\bm{Z}_1+\bm{Z}_2+\bm{Z}_{T_1}),\mathcal{A}(\bm{Z})\rangle-\langle\mathcal{A}(\bm{Z}_1+\bm{Z}_2
    +\bm{Z}_{T_1}),\mathcal{A}(\bm{Z}_3-\bm{Z}_{T_1})\rangle\\
    =&-\langle\mathcal{A}(\bm{Z}_1+\bm{Z}_2+\bm{Z}_{T_1}),\mathcal{A}(\bm{Z}_3-\bm{Z}_{T_1})\rangle.
\end{aligned}
\end{equation*}
On the other hand, direct calculation yields 
\begin{align}
    &\vert \langle\mathcal{A}(\bm{Z}_1+\bm{Z}_2+\bm{Z}_{T_1}),\mathcal{A}(\bm{Z}_3-\bm{Z}_{T_1})\rangle\vert\notag\\
    =&\vert \langle\mathcal{A}(\bm{Z}_1),\mathcal{A}(\bm{Z}_3-\bm{Z}_{T_1})\rangle+\langle\mathcal{A}(\bm{Z}_2+\bm{Z}_{T_1}),\mathcal{A}(\bm{Z}_3-\bm{Z}_{T_1})\rangle\vert\notag\\
    \leq&\sum_{i\geq 2}\vert\langle\mathcal{A}(\bm{Z}_1),\mathcal{A}(\bm{Z}_{T_i})\rangle\vert+\sum_{i\geq 2}\vert\langle\mathcal{A}(\bm{Z}_2+\bm{Z}_{T_1}),\mathcal{A}(\bm{Z}_{T_i})\rangle\vert\notag\\   
    \leq& (\delta_{r_1}\Vert \bm{Z}_1\Vert_F+\delta_{r_2}\Vert \bm{Z}_2+\bm{Z}_{T_1}\Vert_F)\sum_{i\geq 2}\Vert \bm{Z}_{T_i}\Vert_F\notag\\
    \leq&\sqrt{2} \delta_{r_3}\Vert \bm{Z}_1+\bm{Z}_2+\bm{Z}_{T_1}\Vert_F\sum_{i\geq 2}\Vert \bm{Z}_{T_i}\Vert_F\notag,
\end{align}    
where the second inequality comes from Lemma~\ref{inner_0} and the third inequality comes from the monotonicity of the $RIP$ constant.
 Easily, invoking Lemma~\ref{mn_ineq}, we can find that  
\begin{equation*}
\begin{aligned}
&\vert \langle\mathcal{A}(\bm{Z}_1+\bm{Z}_2+\bm{Z}_{T_1}),\mathcal{A}(\bm{Z}_3-\bm{Z}_{T_1})\rangle\vert\\
\leq& \sqrt{2} \delta_{\max\{r+\lceil\frac{3}{2}k\rceil,2k\}}\Vert \bm{Z}_1+\bm{Z}_2+\bm{Z}_{T_1}\Vert_F\sum_{i\geq 2}\Vert \bm{Z}_{T_i}\Vert_F
\end{aligned}
\end{equation*}
since the arbitrariness of $m_1,n_1,m_2,n_2$.

Hence 
\begin{equation*}
\begin{aligned}
    &\Vert\mathcal{A}(\bm{Z}_1+\bm{Z}_2+\bm{Z}_{T_1})\Vert_2^2
    \\\leq&  \sqrt{2} \delta_{\max\{r+\lceil\frac{3}{2}k\rceil,2k\}}\Vert \bm{Z}_1+\bm{Z}_2+\bm{Z}_{T_1}\Vert_F\sum_{i\geq 2}\Vert \bm{Z}_{T_i}\Vert_F,
\end{aligned}
\end{equation*}
combining with the fact that $$\Vert\mathcal{A}(\bm{Z}_1+\bm{Z}_2+\bm{Z}_{T_1})\Vert_2^2\geq (1-\delta_{2r+k})\Vert \bm{Z}_1+\bm{Z}_2+\bm{Z}_{T_1}\Vert_F^2,$$ we have 
\begin{equation}\label{z12t_ineq}
    \Vert \bm{Z}_1+\bm{Z}_2+\bm{Z}_{T_1}\Vert_F\leq\frac{ \sqrt{2} \delta_{\max\{r+\lceil\frac{3}{2}k\rceil,2k\}}}{1-\delta_{2r+k}}\sum_{i\geq 2}\Vert \bm{Z}_{T_i}\Vert_F.
\end{equation}

Now we will give an upper bound for the right side of (\ref{z12t_ineq}). 
For any $q\in T_i$ with $i\geq 2$, according to Lemma~\ref{lema_ineq_1-2} it yields 
\begin{equation*}
    \sigma_q\leq \min_{p\in T_{i-1}}\sigma_p(\bm{Z}_{33})\leq\cfrac{\Vert \bm{Z}_{T_{i-1}}\Vert_*-\Vert \bm{Z}_{T_{i-1}}\Vert_F}{k-\sqrt{k}},
\end{equation*}
which implies 
\begin{equation*}
    \Vert \sigma_{T_{i}}(\bm{Z}_{33})\Vert_2\leq \sqrt{k}\cfrac{\Vert \bm{Z}_{T_{i-1}}\Vert_*-\Vert \bm{Z}_{T_{i-1}}\Vert_F}{k-\sqrt{k}}.
\end{equation*}
By direct calculation, we have 
\begin{align}\label{sum_zj}
    \sum_{i=2}^h\Vert \bm{Z}_{T_i}\Vert_F \nonumber
    \leq& \frac{1}{\sqrt{k}-1}\sum_{i=2}^h(\Vert \bm{Z}_{T_{i-1}}\Vert_*-\Vert \bm{Z}_{T_{i-1}}\Vert_F)\nonumber\\
    \leq& \frac{1}{\sqrt{k}-1}\sum_{i=1}^h(\Vert \bm{Z}_{T_{i}}\Vert_*-\Vert \bm{Z}_{T_{i}}\Vert_F)\nonumber\\
    \leq& \frac{1}{\sqrt{k}-1}(\Vert \bm{Z}_3\Vert_*-\Vert \bm{Z}_3\Vert_F)\nonumber\\
   \leq& \frac{\sqrt{2r}+1}{\sqrt{k}-1}\Vert \bm{Z}_1+\bm{Z}_2\Vert_F+\frac{2}{\sqrt{k}-1}\Vert \bm{X}^o_{r_c}\Vert_*,
\end{align}
where the third inequality holds since $$\sum_{i=1}^h\Vert \bm{Z}_{T_{i}}\Vert_F\geq\sqrt{\sum_{i=1}^h\Vert \bm{Z}_{T_{i}}\Vert_F^2}=\Vert \bm{Z}_3\Vert_F,$$ and the last inequality comes from~(\ref{zrc_ineq}). 
Combining with~(\ref{z12t_ineq}), it is easy to verify that 
\begin{equation}\label{sumz12t1}
  \Vert \bm{Z}_1+\bm{Z}_2+\bm{Z}_{T_1}\Vert_F\leq \frac{2\beta}{(\sqrt{2r}+1)(1-\beta)}\Vert \bm{X}^o_{r_c}\Vert_*
\end{equation}
since $1-\beta>0$ from the assumption. 
Then we have
\begin{equation*}
    \begin{aligned}
        \Vert \bm{Z}\Vert_F&\leq \Vert \bm{Z}_1+\bm{Z}_2+\bm{Z}_{T_1}\Vert_F+\sum_{i=2}^h\Vert \bm{Z}_{T_i}\Vert_F\\
        &\leq \frac{\sqrt{k}+\sqrt{2r}}{\sqrt{k}-1}\Vert \bm{Z}_1+\bm{Z}_2+\bm{Z}_{T_1}\Vert_F+\frac{2}{\sqrt{k}-1}\Vert \bm{X}^o_{r_c}\Vert_*\\
        &\leq \alpha \Vert \bm{X}^o_{r_c}\Vert_*,
    \end{aligned}
\end{equation*}
where the first inequality follows from~(\ref{sum_zj}) and the last inequality follows from~(\ref{sumz12t1}).
This indicates 
\begin{equation*}
  \Vert \hat{\bm{X}}^*-\bm{X}^o \Vert_F \leq \alpha \Vert \bm{X}^o_{r_c}\Vert_*.
\end{equation*}
Hence, we get the desired results. The special case $\textbf{rank}(\bm{X}^o)\leq r$ implies $\bm{X}^o=\bm{X}^o_r$ and hence $\hat{\bm{X}}^*=\bm{X}^o$ is trivial. Thus, we complete the proof.
\end{proof}
Naturally, by setting different values of $k$, we can get different bounds on isometry constant. When choosing $k=2r$, we obtain the following RIP condition involved in $\delta_{4r}$. 
\begin{corollary}
Let $\mathcal{A}:\mathbb{R}^{m\times n}\rightarrow\mathbb{R}^l$ be a linear map and $\bm{b}\in\mathbb{R}^l$. Set the desired matrix $\bm{X}^o$ satisfies $\mathcal{A}(\bm{X}^o)=\bm{b}$, that implies the vector $\bm{s}=0$ in~(\ref{noise_AX}). For a given positive integer $r\leq t$, if linear map $\mathcal{A}$ obeys 
    \begin{equation}
      \delta_{4r}<\cfrac{\sqrt{2r}-1}{\sqrt{2r}-1+\sqrt{2}(\sqrt{2r}+1)},
    \end{equation}
 then any optimal solution $\hat{\bm{X}}^*$ of problem~(\ref{exact_pro}) satisfies
 \begin{equation*}
     \Vert \bm{X}^o-\hat{\bm{X}}^*\Vert_F\leq \hat{\alpha}\Vert \bm{X}^o-X_r^o\Vert_F,
 \end{equation*}
 where $\hat{\alpha}:=\cfrac{2+(2\sqrt{2}-2)\delta_{4r}}{(\sqrt{2r}-1)-[(\sqrt{2r}-1)+\sqrt{2}(\sqrt{2r}+1)]\delta_{4r}}$. Moreover, if $\bm{X}^o$ is $r$-rank matrix, the unique minimizer of problem~(\ref{exact_pro}) is exactly $\bm{X}^o$.
\end{corollary}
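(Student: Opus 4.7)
The plan is to derive the corollary as a direct specialization of Theorem~\ref{rip_exact} to the single choice $k=2r$; no new technical inequalities are needed, and the entire argument is algebraic simplification of the expressions for $\beta$ and $\alpha$ appearing there.

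First I would observe that with $k=2r$ the two RIP orders appearing in Theorem~\ref{rip_exact} collapse to one: on the one hand $2r+k=4r$, and on the other hand $\lceil 3k/2\rceil=3r$ and $2k=4r$ force $\max\{r+\lceil 3k/2\rceil,\,2k\}=4r$ as well. Consequently the constant $\beta$ of the theorem reduces to $\beta=\sqrt{2}\,\delta_{4r}(\sqrt{2r}+1)/\bigl[(1-\delta_{4r})(\sqrt{2r}-1)\bigr]$, so a single isometry constant $\delta_{4r}$ governs the hypothesis.

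Next I would rewrite the requirement $\beta<1$ as an explicit bound on $\delta_{4r}$. Clearing the positive denominator and moving every $\delta_{4r}$ term to one side gives $\delta_{4r}\bigl[\sqrt{2}(\sqrt{2r}+1)+(\sqrt{2r}-1)\bigr]<\sqrt{2r}-1$, which is exactly the corollary's hypothesis. Thus the assumption $\beta<1$ of Theorem~\ref{rip_exact} is equivalent to the stated RIP bound at order $4r$, and in particular the theorem is applicable with this $k$.

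Finally I would evaluate $\hat{\alpha}$ from the expression $\alpha=\bigl[2(\sqrt{2r}+1)+2\beta(\sqrt{2r}-1)\bigr]/\bigl[(\sqrt{2r}-1)(1-\beta)(\sqrt{2r}+1)\bigr]$. Substituting the formula for $\beta$, both the numerator and the denominator acquire a common factor $1/(1-\delta_{4r})$ that cancels; after a further cancellation of $(\sqrt{2r}+1)$, the numerator simplifies to $2+(2\sqrt{2}-2)\delta_{4r}$ and the denominator to $(\sqrt{2r}-1)-\delta_{4r}\bigl[(\sqrt{2r}-1)+\sqrt{2}(\sqrt{2r}+1)\bigr]$, which is exactly $\hat{\alpha}$. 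The error estimate and the exact recovery conclusion for rank-$r$ matrices then follow verbatim from Theorem~\ref{rip_exact}. There is no serious obstacle; the one point worth flagging is the coincidence that the two RIP orders in the theorem both equal $4r$ when $k=2r$, which is precisely what permits the corollary to depend only on the single constant $\delta_{4r}$.
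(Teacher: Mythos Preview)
Your proposal is correct and matches the paper's own approach: the paper presents this corollary without a separate proof, simply noting that it follows from Theorem~\ref{rip_exact} by choosing $k=2r$. Your algebraic verification that both RIP orders collapse to $4r$, that $\beta<1$ becomes the stated bound on $\delta_{4r}$, and that $\alpha$ simplifies to $\hat{\alpha}$ is exactly the intended derivation.
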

Next we shall establish the recovery estimation of constrained $L_{*-F}$ minimization when measurements are contaminated by noise.
\begin{theorem}
Under the assumptions of Theorem~\ref{rip_exact} except that the desired matrix $\bm{X}^o$ satisfies
     $\mathcal{A}(\bm{X}^o)+\bm{s}=\bm{b}$ with perturbation $\Vert\bm{s}
     \Vert_2\leq \epsilon$, we have that the optimal solution $\bm{X}^{opt}$ of problem~(\ref{unexact_pro})
 satisfies
 \begin{equation*}
     \Vert \bm{X}^o-\bm{X}^{opt}\Vert_F\leq \alpha\Vert \bm{X}^o-X_r^o\Vert_F+\bar{\alpha}\epsilon,
 \end{equation*}
 where 
   $$\alpha:=\cfrac{2(\sqrt{2r}+1)+2\beta(\sqrt{k}-1)}{(\sqrt{k}-1)(1-\beta)(\sqrt{2r}+1)},$$ and  $$\bar{\alpha}:=\cfrac{2(\sqrt{k}+\sqrt{2r})\sqrt{1+\delta_{2r+k}}}{(\sqrt{k}-1)(1-\beta)(1-\delta_{2r+k})}.$$
 Moreover, if $\bm{X}^o$ is $r$-rank matrix, then it yields 
 \begin{equation*}
     \Vert \bm{X}^o-\bm{X}^{opt}\Vert_F\leq \bar{\alpha}\epsilon.
 \end{equation*}
\end{theorem}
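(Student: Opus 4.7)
The plan is to adapt the proof of Theorem~\ref{rip_exact} with noise bookkeeping. First I observe that $\bm{X}^o$ is feasible for (\ref{unexact_pro}) since $\|\mathcal{A}(\bm{X}^o)-\bm{b}\|_2=\|\bm{s}\|_2\leq\epsilon$, so with $\bm{Z}:=\bm{X}^{opt}-\bm{X}^o$ the optimality of $\bm{X}^{opt}$ yields $\|\bm{X}^{opt}\|_*-\|\bm{X}^{opt}\|_F\leq\|\bm{X}^o\|_*-\|\bm{X}^o\|_F$. Hence the same cone condition (\ref{zrc_ineq}) on $\bm{Z}^{r_c}$ versus $\bm{Z}^r$ and $\bm{X}^o_{r_c}$ continues to hold. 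The only structural change is that $\mathcal{A}(\bm{Z})$ is no longer zero; the triangle inequality gives $\|\mathcal{A}(\bm{Z})\|_2\leq 2\epsilon$.

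I would then reuse the block decomposition $\bm{Z}=\bm{Z}_1+\bm{Z}_2+\bm{Z}_3$ and the partition of the tail $\bm{Z}_3$ into rank-$k$ pieces $\bm{Z}_{T_i}$ sorted by decreasing singular values, exactly as in Theorem~\ref{rip_exact}. The identity
\begin{equation*}
\|\mathcal{A}(\bm{Z}_1+\bm{Z}_2+\bm{Z}_{T_1})\|_2^2 = \langle\mathcal{A}(\bm{Z}_1+\bm{Z}_2+\bm{Z}_{T_1}),\mathcal{A}(\bm{Z})\rangle - \langle\mathcal{A}(\bm{Z}_1+\bm{Z}_2+\bm{Z}_{T_1}),\mathcal{A}(\bm{Z}_3-\bm{Z}_{T_1})\rangle
\end{equation*}
still holds, but the first inner product is now nonzero. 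By Cauchy--Schwarz together with the upper RIP inequality applied to the rank-$(2r+k)$ matrix $\bm{Z}_1+\bm{Z}_2+\bm{Z}_{T_1}$, this term is controlled by $2\epsilon\sqrt{1+\delta_{2r+k}}\,\|\bm{Z}_1+\bm{Z}_2+\bm{Z}_{T_1}\|_F$. The second inner product is handled verbatim via Lemma~\ref{inner_0} and Lemma~\ref{mn_ineq}. Dividing through by $\|\bm{Z}_1+\bm{Z}_2+\bm{Z}_{T_1}\|_F$ (the case where this quantity vanishes is trivial), the analogue of (\ref{z12t_ineq}) becomes
\begin{equation*}
\|\bm{Z}_1+\bm{Z}_2+\bm{Z}_{T_1}\|_F \leq \frac{\sqrt{2}\,\delta_{\max\{r+\lceil\frac{3}{2}k\rceil,2k\}}}{1-\delta_{2r+k}}\sum_{i\geq 2}\|\bm{Z}_{T_i}\|_F + \frac{2\epsilon\sqrt{1+\delta_{2r+k}}}{1-\delta_{2r+k}}.
\end{equation*}

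From here the chain of estimates is identical to Theorem~\ref{rip_exact}: Lemma~\ref{lema_ineq_1-2} and the cone condition (\ref{zrc_ineq}) give (\ref{sum_zj}) without change, and substituting back yields
\begin{equation*}
\|\bm{Z}_1+\bm{Z}_2+\bm{Z}_{T_1}\|_F \leq \frac{2\beta}{(\sqrt{2r}+1)(1-\beta)}\|\bm{X}^o_{r_c}\|_* + \frac{2\epsilon\sqrt{1+\delta_{2r+k}}}{(1-\beta)(1-\delta_{2r+k})}.
\end{equation*}
Finally $\|\bm{Z}\|_F\leq \frac{\sqrt{k}+\sqrt{2r}}{\sqrt{k}-1}\|\bm{Z}_1+\bm{Z}_2+\bm{Z}_{T_1}\|_F+\frac{2}{\sqrt{k}-1}\|\bm{X}^o_{r_c}\|_*$ regroups the two sources of error: the piece proportional to $\|\bm{X}^o_{r_c}\|_*$ collapses algebraically to the coefficient $\alpha$ of Theorem~\ref{rip_exact}, while the piece proportional to $\epsilon$ yields exactly $\bar{\alpha}$. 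The $r$-rank case is immediate since then $\bm{X}^o_{r_c}=0$.

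The main obstacle is not conceptual but accounting: carrying the new cross-term $2\epsilon\sqrt{1+\delta_{2r+k}}$ cleanly through the recursion and confirming that the coefficient of $\|\bm{X}^o_{r_c}\|_*$ that emerges still simplifies to $\alpha$ while the $\epsilon$ coefficient consolidates into $\bar{\alpha}$. One minor point of care is that Lemma~\ref{inner_0} and the RIP are applied only to matrices of rank at most $2r+k$, so the hypothesis $\delta_{2r+k}<1$ from Theorem~\ref{rip_exact} is exactly what is needed to make $1-\delta_{2r+k}$ a valid divisor in the final noise term.
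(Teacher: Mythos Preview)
Your proposal is correct and follows essentially the same route as the paper's proof: the same block decomposition, the same cone inequality (\ref{zrc_ineq}) from optimality (your explicit check that $\bm{X}^o$ is feasible is a point the paper leaves implicit), the same control of the cross term $\langle\mathcal{A}(\bm{Z}_1+\bm{Z}_2+\bm{Z}_{T_1}),\mathcal{A}(\bm{Z})\rangle$ via Cauchy--Schwarz, upper RIP, and $\|\mathcal{A}(\bm{Z})\|_2\leq 2\epsilon$, and the same final assembly through (\ref{sum_zj}). The paper organizes the algebra slightly differently (it keeps both sides multiplied by $\|\bm{Z}_1+\bm{Z}_2+\bm{Z}_{T_1}\|_F$ for one extra step before dividing), but the content is identical.
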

\begin{proof}
    Let $\bm{X}^{opt}=\bm{X}^o+\bm{Z}$, then starting from the block decomposition of $\bm{Z}$
 with respect to $\bm{X}^o$ and the fact $$\Vert \bm{X}^{opt}\Vert_*-\Vert \bm{X}^{opt}\Vert_F\leq \Vert \bm{X}^{o}\Vert_*-\Vert \bm{X}^{o}\Vert_F,$$ we repeat the arguments in the proof of Theorem~\ref{rip_exact} and obtain 
 \begin{equation*}
    \begin{aligned}
&-\sqrt{2} \delta_{\max\{r+\lceil\frac{3}{2}k\rceil,2k\}}(\Vert \bm{Z}_1+\bm{Z}_2+\bm{Z}_{T_1}\Vert_F)\sum_{i\geq 2}^h\Vert \bm{Z}_{T_i}\Vert_F\\
\leq & \langle\mathcal{A}(\bm{Z}_1+\bm{Z}_2+\bm{Z}_{T_1}),\mathcal{A}(\bm{Z})\rangle-\Vert\mathcal{A}(\bm{Z}_1+\bm{Z}_2+\bm{Z}_{T_1})\Vert_2^2
 \end{aligned}  
 \end{equation*}
 and 
  \begin{equation}\label{sum_zi}
     \sum_{i=2}^h\Vert \bm{Z}_{T_i}\Vert_F\leq \frac{\sqrt{2r}+1}{\sqrt{k}-1}\Vert \bm{Z}_1+\bm{Z}_2\Vert_F+\frac{2}{\sqrt{k}-1}\Vert \bm{X}^o_{r_c}\Vert_*.
 \end{equation}
 Easily, we can find 
\begin{equation*}
    \begin{aligned}
      &-\sqrt{2} \delta_{\max\{r+\lceil\frac{3}{2}k\rceil,2k\}}(\Vert \bm{Z}_1+\bm{Z}_2+\bm{Z}_{T_1}\Vert_F)\sum_{i\geq 2}^h\Vert \bm{Z}_{T_i}\Vert_F\nonumber\\
      \leq &2\epsilon \sqrt{1+\delta_{2r+k}}\Vert \bm{Z}_1+\bm{Z}_2+\bm{Z}_{T_1}\Vert_F-(1-\delta_{2r+k})\Vert \bm{Z}_1
      +\bm{Z}_2+\bm{Z}_{T_1}\Vert_F^2,
    \end{aligned}
\end{equation*}
which together with~(\ref{sum_zi}) yields
\begin{equation*}
\begin{aligned}
  & (1-\delta_{2r+k})\Vert \bm{Z}_1+\bm{Z}_2+\bm{Z}_{T_1}\Vert_F\\
   \leq & \sqrt{2} \delta_{\max\{r+\lceil\frac{3}{2}k\rceil,2k\}}\frac{\sqrt{2r}+1}{\sqrt{k}-1}\Vert \bm{Z}_1+\bm{Z}_2\Vert_F+\frac{2\sqrt{2} \delta_{\max\{r+\lceil\frac{3}{2}k\rceil,2k\}}}{\sqrt{k}-1}\Vert \bm{X}^o_{r_c}\Vert_*
   +2\sqrt{1+\delta_{2r+k}}\epsilon.
\end{aligned}
\end{equation*}
Since 
\begin{equation*}
    \beta:= \cfrac{\sqrt{2}\delta_{\max\{r+\lceil\frac{3}{2}k\rceil,2k\}}(\sqrt{2r}+1)}{(1-\delta_{2r+k})(\sqrt{k}-1)}<1
\end{equation*}
and $\Vert \bm{Z}_1+\bm{Z}_2\Vert_F\leq \Vert \bm{Z}_1+\bm{Z}_2+\bm{Z}_{T_1}\Vert_F,$ we have 
\begin{equation}\label{z123_ineq}
\begin{aligned}
 \Vert \bm{Z}_1+\bm{Z}_2+\bm{Z}_{T_1}\Vert_F
 \leq \cfrac{2\sqrt{2} \delta_{\max\{r+\lceil\frac{3}{2}k\rceil,2k\}}}{(1-\delta_{2r+k})(\sqrt{k}-1)(1-\beta)}\Vert \bm{X}^o_{r_c}\Vert_*+\cfrac{2\sqrt{1+\delta_{2r+k}}}{(1-\delta_{2r+k})(1-\beta)}\epsilon.
 \end{aligned}
\end{equation}
Therefore, direct calculation yields
\begin{equation*}
    \begin{aligned}
        \Vert \bm{Z}\Vert_F\leq &\Vert \bm{Z}_1+\bm{Z}_2+\bm{Z}_{T_1}\Vert_F+\sum_{i\geq 2}^h\Vert \bm{Z}_{T_i}\Vert_F\\
        \leq & \frac{\sqrt{k}+\sqrt{2r}}{\sqrt{k}-1}\Vert \bm{Z}_1+\bm{Z}_2+\bm{Z}_{T_1}\Vert_F+\frac{2}{\sqrt{k}-1}\Vert \bm{X}^o_{r_c}\Vert_*\\
        \leq & \alpha\Vert \bm{X}^o_{r_c}\Vert_*+\bar{\alpha}\epsilon.
    \end{aligned}
\end{equation*}
Here the first inequality comes from~(\ref{sum_zi}), and the last inequality comes from~(\ref{z123_ineq}) and the definitions of $\alpha$ and $\bar{\alpha}$. Additionally, from the above discussion, the desired result is easily obtained when $\bm{X}^o$ is $r$-rank matrix. Thus, we complete the proof. 
 \end{proof}
 
\section{Main Results on Unconstrained Minimization} \label{sec:some_T}
\subsection{Relation Between Problem (\ref{exact_pro}) and Problem (\ref{unonstrainpro})}
 We now show that in some sense, problem~(\ref{exact_pro}) can be solved via solving problem~(\ref{unonstrainpro}). We note that the regularization term $\Vert \bm{X}\Vert_*-\Vert \bm{X}\Vert_F$ is nonconvex and nonsmooth, hence the result is nontrivial. 
  \begin{theorem}
Assume linear map $\mathcal{A}$ obeys $\delta_1<1$. Let $\{\lambda_n\}$ be a decreasing sequence of positive numbers with $\lambda_n\rightarrow 0$ and $\bar{\bm{X}}_{\lambda_n}$ be the optimal solution of the problem (\ref{unonstrainpro}) with $\lambda=\lambda_n$. If problem~(\ref{exact_pro}) is feasible, then sequence $\{\bar{\bm{X}}_{\lambda_n}\}$ is bounded and any of its accumulation points is the optimal solution of the problem (\ref{exact_pro}).
    \end{theorem}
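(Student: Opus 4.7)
The plan is a classical penalty-to-constraint limiting argument: feasibility of~(\ref{exact_pro}) furnishes a uniform upper bound on the regularized objective values, the RIP condition $\delta_1<1$ is then used to upgrade this into a Frobenius bound on the sequence $\{\bar{\bm{X}}_{\lambda_n}\}$, after which passing to the limit is routine.

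First I would fix any feasible $\bm{X}^*$ for~(\ref{exact_pro}), which exists by hypothesis, and set $C:=\Vert \bm{X}^*\Vert_*-\Vert \bm{X}^*\Vert_F\geq 0$. Since $\mathcal{A}(\bm{X}^*)=\bm{b}$, we have $\mathcal{J}(\bm{X}^*)=C$, so by optimality of $\bar{\bm{X}}_{\lambda_n}$ in~(\ref{unonstrainpro}) with $\lambda=\lambda_n$,
\begin{equation*}
\Vert \bar{\bm{X}}_{\lambda_n}\Vert_*-\Vert \bar{\bm{X}}_{\lambda_n}\Vert_F+\frac{1}{2\lambda_n}\Vert \mathcal{A}(\bar{\bm{X}}_{\lambda_n})-\bm{b}\Vert_2^2\leq C.
\end{equation*}
Both terms on the left are nonnegative (the first by Lemma~\ref{lema_ineq_1-2}), so this single inequality delivers $\Vert \mathcal{A}(\bar{\bm{X}}_{\lambda_n})-\bm{b}\Vert_2^2\leq 2\lambda_n C\to 0$ and the uniform surrogate bound $\Vert \bar{\bm{X}}_{\lambda_n}\Vert_*-\Vert \bar{\bm{X}}_{\lambda_n}\Vert_F\leq C$.

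The main obstacle is upgrading the latter to boundedness of $\{\bar{\bm{X}}_{\lambda_n}\}$ itself, since $\Vert \cdot\Vert_*-\Vert \cdot\Vert_F$ vanishes on rank-$1$ matrices and therefore cannot control $\Vert \bar{\bm{X}}_{\lambda_n}\Vert_F$ on its own. I would argue by contradiction via a rank-one blow-up: assume $\Vert \bar{\bm{X}}_{\lambda_{n_k}}\Vert_F\to\infty$ along a subsequence and set $\bm{Y}_k:=\bar{\bm{X}}_{\lambda_{n_k}}/\Vert \bar{\bm{X}}_{\lambda_{n_k}}\Vert_F$. Dividing the previous estimates by $\Vert \bar{\bm{X}}_{\lambda_{n_k}}\Vert_F$ yields $\Vert \bm{Y}_k\Vert_F=1$, $\Vert \bm{Y}_k\Vert_*\to 1$, and $\Vert \mathcal{A}(\bm{Y}_k)\Vert_2\to 0$. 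The key observation is that $\Vert \bm{Y}_k\Vert_F=1$ with $\Vert \bm{Y}_k\Vert_*\to 1$ forces $\sigma_1(\bm{Y}_k)\to 1$, because if $\sigma_1(\bm{Y}_k)\leq 1-\varepsilon$ then $\sum_i\sigma_i(\bm{Y}_k)\geq\sum_i\sigma_i(\bm{Y}_k)^2/(1-\varepsilon)=1/(1-\varepsilon)>1$, contradicting $\Vert \bm{Y}_k\Vert_*\to 1$. Consequently $\sum_{i\geq 2}\sigma_i(\bm{Y}_k)^2\to 0$, and compactness of the unit spheres carrying the leading singular vectors produces a subsequential limit $\bm{Y}^*=\bm{u}^*(\bm{v}^*)^{\top}$ of rank one with $\Vert \bm{Y}^*\Vert_F=1$ and, by continuity of $\mathcal{A}$, $\mathcal{A}(\bm{Y}^*)=\bm{0}$. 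But Definition~\ref{def_map_rip} at $r=1$ then gives $(1-\delta_1)\Vert \bm{Y}^*\Vert_F^2\leq\Vert \mathcal{A}(\bm{Y}^*)\Vert_2^2=0$, forcing $\delta_1\geq 1$, a contradiction. Hence $\{\bar{\bm{X}}_{\lambda_n}\}$ is bounded.

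With boundedness in hand, the remainder is straightforward. For any accumulation point $\bar{\bm{X}}^*$ along a subsequence $\bar{\bm{X}}_{\lambda_{n_k}}\to\bar{\bm{X}}^*$, continuity of $\mathcal{A}$ together with $\Vert \mathcal{A}(\bar{\bm{X}}_{\lambda_n})-\bm{b}\Vert_2\to 0$ gives $\mathcal{A}(\bar{\bm{X}}^*)=\bm{b}$, so $\bar{\bm{X}}^*$ is feasible for~(\ref{exact_pro}). Letting $k\to\infty$ in the inequality $\Vert \bar{\bm{X}}_{\lambda_{n_k}}\Vert_*-\Vert \bar{\bm{X}}_{\lambda_{n_k}}\Vert_F\leq\Vert \bm{X}^*\Vert_*-\Vert \bm{X}^*\Vert_F$, valid for every feasible $\bm{X}^*$, and using continuity of the nuclear and Frobenius norms yields $\Vert \bar{\bm{X}}^*\Vert_*-\Vert \bar{\bm{X}}^*\Vert_F\leq\Vert \bm{X}^*\Vert_*-\Vert \bm{X}^*\Vert_F$ for every feasible $\bm{X}^*$, which is precisely the optimality of $\bar{\bm{X}}^*$ for~(\ref{exact_pro}).
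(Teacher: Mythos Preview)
Your proof is correct and complete. The opening and closing steps match the paper's proof, but your argument for boundedness of $\{\bar{\bm{X}}_{\lambda_n}\}$ takes a genuinely different route.

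The paper argues constructively: it splits each $\bar{\bm{X}}_{\lambda_n}$ via its SVD into a rank-one top piece $\bm{Y}^1_{\lambda_n}$ and a remainder $\bm{Y}^2_{\lambda_n}$, uses Lemma~\ref{lema_ineq_1-2}(b) through an auxiliary rank-two truncation to deduce $(2-\sqrt{2})\sigma_2(\bar{\bm{X}}_{\lambda_n})\leq \Vert\bar{\bm{X}}_{\lambda_n}\Vert_*-\Vert\bar{\bm{X}}_{\lambda_n}\Vert_F\leq C$, hence $\Vert\bm{Y}^2_{\lambda_n}\Vert_F$ is bounded; the triangle inequality then bounds $\Vert\mathcal{A}(\bm{Y}^1_{\lambda_n})\Vert_2$, and the RIP at $r=1$ converts this into a bound on $\Vert\bm{Y}^1_{\lambda_n}\Vert_F$. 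Your argument instead normalizes and passes to a limit, exhibiting (under the contrary hypothesis) a nonzero rank-one matrix in $\ker\mathcal{A}$, which $\delta_1<1$ forbids. Both routes ultimately hinge on the same fact---that $\delta_1<1$ rules out nonzero rank-one matrices in the kernel---but the paper's version is quantitative (one can read off an explicit Frobenius bound in terms of $C$, $\Vert\bm{b}\Vert_2$, $\Vert\mathcal{A}\Vert$, and $\delta_1$), while yours is a clean compactness argument that avoids the SVD bookkeeping and the appeal to Lemma~\ref{lema_ineq_1-2}. Incidentally, your singular-vector extraction can be shortened: once $\Vert\bm{Y}_k\Vert_F=1$ and $\Vert\bm{Y}_k\Vert_*\to 1$, any subsequential limit $\bm{Y}^*$ satisfies $\Vert\bm{Y}^*\Vert_*=\Vert\bm{Y}^*\Vert_F=1$ by norm continuity, and equality $\Vert\sigma(\bm{Y}^*)\Vert_1=\Vert\sigma(\bm{Y}^*)\Vert_2$ already forces rank one.
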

\begin{proof}
 Let $\bar{\bm{X}}$ be any feasible point of problem~(\ref{exact_pro}), then $\mathcal{A}(\bar{\bm{X}})=\bm{b}$. Since $\bar{\bm{X}}_{\lambda_n}$ is the optimal solution of problem~(\ref{unonstrainpro}) with respect to $\lambda=\lambda_n$, we have 
    \begin{equation}\label{equpper}
    \begin{aligned}
\Vert\bar{\bm{X}}_{\lambda_n}\Vert_*-\Vert\bar{\bm{X}}_{\lambda_n}\Vert_F+\frac{1}{2\lambda_n}\Vert\mathcal{A}(\bar{\bm{X}}_{\lambda_n})-\bm{b}\Vert_2^2
        \leq \Vert\bar{\bm{X}}\Vert_*-\Vert\bar{\bm{X}}\Vert_F.
           \end{aligned}
    \end{equation}
  Since $\lambda_n\rightarrow 0$ as $n\rightarrow \infty$, the sequences $\{\Vert\bar{\bm{X}}_{\lambda_n}\Vert_*-\Vert\bar{\bm{X}}_{\lambda_n}\Vert_F\}$ and $\{\Vert\mathcal{A}(\bar{\bm{X}}_{\lambda_n})-\bm{b}\Vert_2\}$ converge to zero and so they are bounded.
   
    Next we will show that the sequence $\{\bar{\bm{X}}_{\lambda_n}\}$ is bounded and hence it has at least one accumulation point. 
Denote $\bm{U}_{\lambda_n}\mathcal{D}(\sigma(\bar{\bm{X}}_{\lambda_n}))\bm{V}_{\lambda_n}$ as the SVD of $\bar{\bm{X}}_{\lambda_n}$. Define $$\bar{\bm{X}}_{\lambda_n}^1:=\bm{U}_{\lambda_n}\mathcal{D}(\sigma^1(\bar{\bm{X}}_{\lambda_n}))\bm{V}_{\lambda_n},$$and$$ \bar{\bm{X}}_{\lambda_n}^2:=\bm{U}_{\lambda_n}\mathcal{D}(\sigma^2(\bar{\bm{X}}_{\lambda_n}))\bm{V}_{\lambda_n},$$ where $\sigma^1_i(\bar{\bm{X}}_{\lambda_n})=\sigma_i(\bar{\bm{X}}_{\lambda_n})$ for $i\leq 2$, $\sigma_i^2(\bar{\bm{X}}_{\lambda_n})=\sigma_i(\bar{\bm{X}}_{\lambda_n})$ for $i>2$ and others are $0$. Likewise, let us define matrices $$\bm{Y}^1_{\lambda_n}:=\bm{U}_{\lambda_n}\mathcal{D}(\bm{y}^1_{\lambda_n})\bm{V}_{\lambda_n},~~ \bm{Y}^2_{\lambda_n}:=\bm{U}_{\lambda_n}\mathcal{D}(\bm{y}^2_{\lambda_n})\bm{V}_{\lambda_n},$$ where $(\bm{y}^1_{\lambda_n})_i=\sigma_i(\bar{\bm{X}}_{\lambda_n})$ for $i=1$, $(\bm{y}^2_{\lambda_n})_i=\sigma_i(\bar{\bm{X}}_{\lambda_n})$ for $i>1$ and others are $0$. Clearly, we have
$$\bar{\bm{X}}_{\lambda_n}=\bar{\bm{X}}_{\lambda_n}^1+\bar{\bm{X}}_{\lambda_n}^2=\bm{Y}^1_{\lambda_n}+\bm{Y}^2_{\lambda_n}.$$
By direct computation, it follows from Lemma~\ref{lema_ineq_1-2} (b) that
\begin{equation*}\label{eqlower}
    \begin{aligned}
        \Vert \bar{\bm{X}}_{\lambda_n}\Vert_*-\Vert \bar{\bm{X}}_{\lambda_n}\Vert_F
\geq&\Vert\bar{\bm{X}}_{\lambda_n}^1\Vert_*+\Vert\bar{\bm{X}}_{\lambda_n}^2\Vert_*-\Vert\bar{\bm{X}}_{\lambda_n}^1\Vert_F-\Vert\bar{\bm{X}}_{\lambda_n}^2\Vert_F\\
\geq&\Vert\bar{\bm{X}}_{\lambda_n}^1\Vert_*-\Vert\bar{\bm{X}}_{\lambda_n}^1\Vert_F\\
\geq&(2-\sqrt{2})\sigma_2(\bar{\bm{X}}_{\lambda_n}), 
    \end{aligned}
\end{equation*}
which, together with the boundedness of the sequence $\{\Vert\bar{\bm{X}}_{\lambda_n}\Vert_*-\Vert\bar{\bm{X}}_{\lambda_n}\Vert_F\}$, implies the sequence $\{\Vert\bm{Y}^2_{\lambda_n}\Vert_F\}$ is bounded. Hence, the inequality
\begin{equation*}
    \Vert\mathcal{A}(\bm{Y}^2_{\lambda_n})-\bm{b}\Vert_2\leq\Vert\mathcal{A}\Vert\Vert\bm{Y}^2_{\lambda_n}\Vert_F+\Vert\bm{b}\Vert_2
\end{equation*}
yields the sequence $\{\Vert\mathcal{A}(\bm{Y}^2_{\lambda_n})-\bm{b}\Vert_2\}$ is bounded where $\Vert\mathcal{A}\Vert$ is the operator norm of linear map $
\mathcal{A}$. As a result, since $\{\Vert\mathcal{A}(\bar{\bm{X}}_{\lambda_n})-\bm{b}\Vert_2\}$ is bounded and
   \begin{equation*}
      \Vert\mathcal{A}(\bm{Y}^1_{\lambda_n})\Vert_2\leq  \Vert\mathcal{A}(\bar{\bm{X}}_{\lambda_n})-b\Vert_2+\Vert\mathcal{A}(\bm{Y}^2_{\lambda_n})-\bm{b}\Vert_2,
  \end{equation*} 
 we obtain that the sequence $\{\Vert\mathcal{A}(\bm{Y}^1_{\lambda_n})\Vert_2\}$ is bounded. Furthermore, since linear map $\mathcal{A}$ obeys $\delta_1<1$, it follows from (\ref{ripmap})  that the sequence $\{\Vert\bm{Y}^1_{\lambda_n}\Vert_F\}$ is also bounded. Hence, the boundedness of $\{\bar{\bm{X}}_{\lambda_n}\}$ can be seen from $\bar{\bm{X}}_{\lambda_n}=\bm{Y}^1_{\lambda_n}+\bm{Y}^2_{\lambda_n}$ and the boundedness of $\{\Vert\bm{Y}^1_{\lambda_n}\Vert_F\}$ and $\{\Vert\bm{Y}^2_{\lambda_n}\Vert_F\}$.
  
  Besides, the inequality~(\ref{equpper}) shows
    \begin{equation*}
    \frac{1}{2}\Vert\mathcal{A}(\bar{\bm{X}}_{\lambda_n})-\bm{b}\Vert_2^2\leq \lambda_n(\Vert \bar{\bm{X}}\Vert_*-\Vert\bar{\bm{X}}\Vert_F)
     \end{equation*}
   for any $\lambda_n\rightarrow 0$. Let $\bm{X}^*$ be any accumulation point of the sequence $\{\bar{\bm{X}}_{\lambda_n}\}$. Then we can derive that $\mathcal{A}(\bm{X}^*)=\bm{b}$, which  combines $\Vert \bm{X}^*\Vert_*-\Vert \bm{X}^*\Vert_F\leq \Vert\bar{\bm{X}}\Vert_*-\Vert\bar{\bm{X}}\Vert_F$ and the arbitrariness of $\bar{\bm{X}}$ lead to $\bm{X}^*$ being the optimal solution of problem~(\ref{exact_pro}). This completes the proof.
\end{proof}

\subsection{Recovery Error Estimation for Problem~(\ref{unonstrainpro})}
In this subsection, we provide some theoretical investigations to guarantee the robust recovery of the regularized $L_{1-2}$ minimization problem~(\ref{unonstrainpro}). Let us start with some powerfully technical tools used in the proof of our main results. The following lemma states an elementary geometric fact: any point in a ploytope can be represented as a convex combination of sparse vectors.
\begin{lemma}[Sparse Representation of a Polytope~\cite{cai2013sparse}]\label{sparse_lemma}
  For a positive number $\alpha$ and a positive integer $s$, define the polytope $T(\alpha,s)\subset\mathbb{R}^p$ by 
  \begin{equation*}
   T(\alpha,s)=\{\bm{v}\in\mathbb{R}^p:\Vert \bm{v}\Vert_{\infty}\leq\alpha,\Vert \bm{v}\Vert_1\leq s\alpha\}.   
  \end{equation*}
 For any $\bm{v}\in\mathbb{R}^p$, define the set of sparse vectors $\mathcal{U}(\alpha,s,\bm{v})\subset\mathbb{R}^p$ by 
 \begin{equation}\label{define_U}
 \begin{aligned}
    \mathcal{U}(\alpha,s,\bm{v})=\{\bm{u}\in\mathbb{R}^p: \textup{supp}(\bm{u})\subseteq \textup{supp}(\bm{v}), \Vert \bm{u}\Vert_0\leq s, \Vert \bm{u}\Vert_1=\Vert \bm{v}\Vert_1,\Vert \bm{u}\Vert_{\infty}\leq \alpha\}. 
    \end{aligned}
 \end{equation}
 Then $\bm{v}\in T(\alpha,s)$ if and only if $\bm{v}$ is in the convex hull of $\mathcal{U}(\alpha,s,\bm{v})$. In particularly, any $\bm{v}\in T(\alpha,s)$ can be expressed as 
 \begin{equation*}
     \bm{v}=\sum_{i=1}^{N} \lambda_i \bm{u}_i
 \end{equation*}
 for some positive integer $N$, where $\bm{u}_i\in \mathcal{U}(\alpha,s,\bm{v})$ and 
 \begin{equation}\label{lambdai}
     \sum_{i=1}^{N}\lambda_i=1\quad \text{with}\ \ 0\leq\lambda_i\leq 1.
 \end{equation}
\end{lemma}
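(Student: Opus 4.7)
The plan is to prove the two directions of the equivalence separately, with the ``only if'' direction being the substantive one.

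\textbf{The ``if'' direction.} Assume $\bm{v} = \sum_{i=1}^N \lambda_i \bm{u}_i$ with $\bm{u}_i \in \mathcal{U}(\alpha,s,\bm{v})$ and weights satisfying~\eqref{lambdai}. Convexity of the $\Vert\cdot\Vert_\infty$ norm gives $\Vert\bm{v}\Vert_\infty \leq \max_i \Vert\bm{u}_i\Vert_\infty \leq \alpha$. Since $\Vert\bm{u}_i\Vert_0 \leq s$ and $\Vert\bm{u}_i\Vert_\infty \leq \alpha$, we obtain $\Vert\bm{u}_i\Vert_1 \leq s\alpha$, and hence $\Vert\bm{v}\Vert_1 \leq \sum_i \lambda_i \Vert\bm{u}_i\Vert_1 \leq s\alpha$. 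Therefore $\bm{v} \in T(\alpha,s)$.

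\textbf{The ``only if'' direction.} First, I would reduce to the nonnegative case: for a general $\bm{v} \in T(\alpha,s)$, let $\bm{D}$ be the diagonal sign matrix with $\bm{D}_{ii} = \textnormal{sgn}(v_i)$ on $\textup{supp}(\bm{v})$ and $1$ elsewhere. If $|\bm{v}| = \bm{D}\bm{v}$ admits a decomposition into vectors of $\mathcal{U}(\alpha,s,|\bm{v}|)$, then multiplying through by $\bm{D}$ returns a decomposition of $\bm{v}$ into vectors of $\mathcal{U}(\alpha,s,\bm{v})$, since left-multiplication by $\bm{D}$ preserves sparsity, both norms, and the containment of supports. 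So assume $\bm{v} \geq 0$, let $S := \textup{supp}(\bm{v})$, and consider the polytope
\begin{equation*}
F := \Bigl\{\bm{u} \in \mathbb{R}^p : \textup{supp}(\bm{u}) \subseteq S,\ 0 \leq u_i \leq \alpha \text{ for } i \in S,\ \textstyle\sum_{i \in S} u_i = \Vert\bm{v}\Vert_1\Bigr\}.
\end{equation*}
Since $v_i \leq \alpha$ for each $i$ and $\Vert\bm{v}\Vert_1 \leq s\alpha$, the set $F$ is a nonempty compact convex polytope that contains $\bm{v}$.

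The key step is the vertex analysis of $F$. After eliminating the coordinates outside $S$, the ambient dimension is $|S|$ and there is one equality constraint, so at any extreme point $|S|-1$ of the box constraints must be active; equivalently, at most one coordinate of $\bm{u}$ is strictly between $0$ and $\alpha$. Writing $a$ for the number of coordinates equal to $\alpha$ and allowing at most one residual coordinate with value $\Vert\bm{v}\Vert_1 - a\alpha \in (0,\alpha)$, the number of nonzero coordinates is at most $a+1$; using $a\alpha \leq \Vert\bm{v}\Vert_1 \leq s\alpha$ in each case ($a=s$ with no residual, or $a \leq s-1$ with a residual) gives $\Vert\bm{u}\Vert_0 \leq s$. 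Together with $\textup{supp}(\bm{u}) \subseteq S$, $\Vert\bm{u}\Vert_\infty \leq \alpha$, and $\Vert\bm{u}\Vert_1 = \Vert\bm{v}\Vert_1$, this shows every extreme point of $F$ lies in $\mathcal{U}(\alpha,s,\bm{v})$. Carath\'eodory's theorem then expresses $\bm{v} \in F$ as a convex combination of at most $p+1$ extreme points of $F$, producing the desired representation with weights satisfying~\eqref{lambdai}.

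The main obstacle I anticipate is the vertex analysis itself: one must argue carefully that the single equality constraint $\sum_{i \in S} u_i = \Vert\bm{v}\Vert_1$ together with the assumption $\Vert\bm{v}\Vert_1 \leq s\alpha$ forces the ``$\leq s$'' sparsity bound at every vertex of $F$, including the boundary case where the residual coordinate is zero. As a fallback, one could replace the convex-geometric argument by an induction on $\Vert\bm{v}\Vert_0$: given $\bm{v}$ with $\Vert\bm{v}\Vert_0 > s$, construct $\bm{u}_1 \in \mathcal{U}(\alpha,s,\bm{v})$ supported on the $s$ largest coordinates of $\bm{v}$ with $\Vert\bm{u}_1\Vert_1 = \Vert\bm{v}\Vert_1$, choose $\lambda_1$ maximal so that $\bm{w} := (\bm{v} - \lambda_1 \bm{u}_1)/(1-\lambda_1)$ remains nonnegative, verify $\bm{w} \in T(\alpha,s)$ with strictly smaller support, and recurse. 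This avoids Carath\'eodory at the cost of extra bookkeeping to ensure the $\ell_\infty$ bound on the residual.
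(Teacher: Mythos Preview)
The paper does not prove this lemma at all: it is quoted verbatim from Cai and Zhang~\cite{cai2013sparse} and used as a black box in the proof of Lemma~\ref{upper_hT}. So there is no ``paper's own proof'' to compare against, and your proposal goes well beyond what the paper supplies.

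That said, your argument is correct. The ``if'' direction is immediate. For the ``only if'' direction, the sign reduction is clean, and the vertex analysis of the slice
\[
F=\Bigl\{\bm{u}\in[0,\alpha]^{S}:\ \textstyle\sum_{i\in S}u_i=\Vert\bm v\Vert_1\Bigr\}
\]
is the right idea: an extreme point of a box intersected with a single hyperplane has all but one coordinate at a box endpoint, and the $\ell_1$ budget $\Vert\bm v\Vert_1\le s\alpha$ forces at most $s$ nonzeros in either the ``residual'' or ``no residual'' case, exactly as you wrote. Carath\'eodory then finishes it. The boundary case you flagged (residual equal to zero) is harmless: then $a\alpha=\Vert\bm v\Vert_1\le s\alpha$ gives $a\le s$ directly.

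For context, the original proof in~\cite{cai2013sparse} is closer in spirit to your fallback: it is a constructive induction that peels off one sparse vector at a time rather than invoking extreme-point structure and Carath\'eodory. Your convex-geometric route is shorter and more conceptual, at the cost of being nonconstructive; the inductive version gives an explicit algorithm for producing the $\bm u_i$ and the weights $\lambda_i$, which can matter in applications but is irrelevant for how the lemma is used here.
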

The following lemma established in~\cite{cai2013sharp} is also necessary which give an inequality between the sums of the $\alpha$\,th power of two sequences of nonnegative numbers based on the inequality of their sums.
\begin{lemma}\label{befo_lemma}
    Suppose $m\geq r$, $a_1\geq a_2\geq\cdots\geq a_m\geq 0$, and $\sum_{i=1}^{r}a_i\geq\sum_{i=r+1}^m a_i$, then for all $\alpha\geq 1$,
    \begin{equation*}
        \sum_{j=r+1}^m a_j^{\alpha}\leq\sum_{i=1}^{r} a_i^{\alpha}.
    \end{equation*}
    More generally, suppose $a_1\geq a_2\geq\cdots\geq a_m\geq 0$, $\eta\geq 0$ and $\sum_{i=1}^{r}a_i+\eta\geq\sum_{i=r+1}^m a_i$, then for all $\alpha\geq 1$,
    \begin{equation*}
        \sum_{j=r+1}^m a_j^{\alpha}\leq r \left(\sqrt[\alpha]{\cfrac{\sum_{i=1}^r a_i^{\alpha}}{r}}+\frac{\eta}{r}\right)^{\alpha}.
    \end{equation*}
\end{lemma}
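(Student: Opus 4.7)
The plan is to handle the two claims in turn, using the monotonicity $a_1\ge\cdots\ge a_m\ge 0$ as the single source of all the pointwise comparisons.

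For the first part, I would exploit the pivot value $a_r$, which sits between the two halves: $a_i\ge a_r$ for $i\le r$ and $a_j\le a_r$ for $j>r$. Since $\alpha\ge 1$, the map $x\mapsto x^{\alpha-1}$ on $[0,\infty)$ is monotone increasing, so these comparisons also hold after raising to the $(\alpha-1)$-th power. I would then split each term as $x^\alpha=x\cdot x^{\alpha-1}$ and estimate:
\begin{equation*}
\sum_{j=r+1}^m a_j^\alpha=\sum_{j=r+1}^m a_j\cdot a_j^{\alpha-1}\le a_r^{\alpha-1}\sum_{j=r+1}^m a_j\le a_r^{\alpha-1}\sum_{i=1}^r a_i\le \sum_{i=1}^r a_i\cdot a_i^{\alpha-1}=\sum_{i=1}^r a_i^\alpha,
\end{equation*}
where the middle inequality uses the hypothesis $\sum_{i=1}^r a_i\ge \sum_{j=r+1}^m a_j$. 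This establishes the first claim without any heavy machinery.

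For the more general second part, I would introduce the $\alpha$-mean $\bar a:=\bigl(\tfrac{1}{r}\sum_{i=1}^r a_i^\alpha\bigr)^{1/\alpha}$ and set $b:=\bar a+\eta/r$, so that the desired inequality becomes simply $\sum_{j=r+1}^m a_j^\alpha\le rb^\alpha$. Two observations drive the proof. First, since $a_r$ is the smallest of $a_1,\ldots,a_r$, one has $a_r^\alpha\le\bar a^\alpha$, hence $a_j\le a_r\le\bar a\le b$ for every $j>r$; this gives a uniform pointwise bound on the tail entries. Second, by the power-mean (or Hölder's) inequality, $\sum_{i=1}^r a_i\le r\bar a$, and combining with the hypothesis $\sum_{i=1}^r a_i+\eta\ge\sum_{j=r+1}^m a_j$ yields $\sum_{j=r+1}^m a_j\le r\bar a+\eta=rb$. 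Writing again $a_j^\alpha=a_j\cdot a_j^{\alpha-1}\le a_j\cdot b^{\alpha-1}$ and summing over $j>r$ gives
\begin{equation*}
\sum_{j=r+1}^m a_j^\alpha\le b^{\alpha-1}\sum_{j=r+1}^m a_j\le b^{\alpha-1}\cdot rb=rb^\alpha,
\end{equation*}
which is exactly the claimed bound after substituting back $b=\bar a+\eta/r$.

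The only delicate point is identifying the correct ``pivot'' $b$ and verifying it dominates both the tail entries individually and the tail sum in the right average sense; once those two bounds are in place, the rest is a one-line splitting $x^\alpha=x\cdot x^{\alpha-1}$. Specializing to $\eta=0$ (where $\bar a\ge a_r$ still holds) recovers the first part as well, though I prefer to prove it separately since the argument via $a_r$ is cleaner and explains why no power-mean step is needed when $\eta=0$.
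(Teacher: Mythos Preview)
Your argument is correct in both parts. The paper does not supply its own proof of this lemma but simply cites \cite{cai2013sharp}; the proof there proceeds by exactly the same pivot-and-split strategy you use (bounding the tail entries pointwise by $a_r$ or by the power mean $\bar a$, bounding the tail sum by the hypothesis, and combining via $x^{\alpha}=x\cdot x^{\alpha-1}$), so there is nothing further to compare.
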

With the above preparation, we may state and prove the critical results, which play key role in recovery estimation of regularized minimization. 
\begin{lemma}\label{upper_hT}
    Let $k$ be a positive integer and linear map $\mathcal{A}$ obey the $tk$-order RIP with $\delta_{tk}\in(0,1)$ for certain integer $t>1$. Then for any subset $T\subseteq[n]$ with $\vert T\vert\leq k$ and any matrix $\bm{H}\in\mathbb{R}^{m\times n}$, we have 
    \begin{equation}\label{hkleq}
        \Vert \bm{h}_T\Vert_2\leq \frac{\beta_1}{\sqrt{k}}\Vert \bm{h}_{T^C}\Vert_1+\gamma_1\Vert\mathcal{A}(\bm{H})\Vert_2,
    \end{equation}
    where $\bm{h}\in\mathbb{R}^d$ is the singular value vector of $\bm{H}$ and 
    \begin{equation}\label{para1}
        \beta_1=\frac{\delta_{tk}}{\sqrt{(t-1)[1-\delta_{tk}^2]}},\quad 
        \gamma_1=\frac{2}{(1-\delta_{tk})\sqrt{1+\delta_{tk}}}.
    \end{equation}
\end{lemma}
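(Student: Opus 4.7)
First, diagonalize $\bm{H} = \bm{U}\mathcal{D}(\bm{h})\bm{V}^{\top}$ via the SVD and set $\bm{H}_T := \bm{U}\mathcal{D}(\bm{h}_T)\bm{V}^{\top}$ and $\bm{H}_{T^C} := \bm{U}\mathcal{D}(\bm{h}_{T^C})\bm{V}^{\top}$, so that $\bm{H} = \bm{H}_T + \bm{H}_{T^C}$, the two summands have orthogonal row and column spaces, $\textup{rank}(\bm{H}_T) \leq k$, and crucially $\Vert\bm{H}_T\Vert_F = \Vert\bm{h}_T\Vert_2$. The task therefore reduces to bounding $\Vert\bm{H}_T\Vert_F$.

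Next, apply Lemma~\ref{sparse_lemma} to the vector $\bm{h}_{T^C}$ with sparsity parameter $(t-1)k$ and scale $\alpha = \Vert\bm{h}_{T^C}\Vert_1/((t-1)k)$, producing a convex decomposition $\bm{h}_{T^C} = \sum_i \lambda_i \bm{u}_i$ in which each $\bm{u}_i$ is supported in $T^C$, has at most $(t-1)k$ nonzero entries, and satisfies $\Vert\bm{u}_i\Vert_2^2 \leq \Vert\bm{u}_i\Vert_\infty \Vert\bm{u}_i\Vert_1 \leq \alpha\Vert\bm{h}_{T^C}\Vert_1 = \Vert\bm{h}_{T^C}\Vert_1^2/((t-1)k)$. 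Lifting through the shared singular vectors yields $\bm{U}_i := \bm{U}\mathcal{D}(\bm{u}_i)\bm{V}^{\top}$ of rank at most $(t-1)k$ with row and column supports disjoint from those of $\bm{H}_T$; consequently each perturbation $\bm{H}_T \pm \mu \bm{U}_i$ has rank at most $tk$ and Frobenius-squared norm $\Vert\bm{H}_T\Vert_F^2 + \mu^2\Vert\bm{U}_i\Vert_F^2$, so the $tk$-order RIP is the natural tool.

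With this decomposition in hand, I would start from the identity
\begin{equation*}
\Vert\mathcal{A}(\bm{H}_T)\Vert_2^2 = \langle\mathcal{A}(\bm{H}_T),\mathcal{A}(\bm{H})\rangle - \sum_i \lambda_i \langle\mathcal{A}(\bm{H}_T),\mathcal{A}(\bm{U}_i)\rangle.
\end{equation*}
Cauchy--Schwarz together with the upper $tk$-RIP bounds the first term by $\sqrt{1+\delta_{tk}}\,\Vert\bm{H}_T\Vert_F\,\Vert\mathcal{A}(\bm{H})\Vert_2$. For each cross term, feed $\bm{H}_T \pm \mu \bm{U}_i$ into the polarization identity
\begin{equation*}
4\mu\langle\mathcal{A}(\bm{H}_T),\mathcal{A}(\bm{U}_i)\rangle = \Vert\mathcal{A}(\bm{H}_T+\mu\bm{U}_i)\Vert_2^2 - \Vert\mathcal{A}(\bm{H}_T-\mu\bm{U}_i)\Vert_2^2,
\end{equation*}
apply the upper and lower $tk$-RIP bounds (both legitimate because each operand has rank at most $tk$), and couple with the lower RIP $(1-\delta_{tk})\Vert\bm{H}_T\Vert_F^2 \leq \Vert\mathcal{A}(\bm{H}_T)\Vert_2^2$. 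Substituting $\sum_i \lambda_i\Vert\bm{U}_i\Vert_F \leq \max_i \Vert\bm{U}_i\Vert_F \leq \Vert\bm{h}_{T^C}\Vert_1/\sqrt{(t-1)k}$ and dividing through by $\Vert\bm{H}_T\Vert_F$ produces the desired bound~(\ref{hkleq}).

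The delicate point is extracting the sharp factor $\sqrt{1-\delta_{tk}^2}$ buried in $\beta_1$ rather than the cruder $1-\delta_{tk}$ that emerges from a naive pass. A blunt application of Lemma~\ref{inner_0} on each cross term gives the correct structural inequality but with $1/(1-\delta_{tk})$ where one wants $1/\sqrt{(1-\delta_{tk})(1+\delta_{tk})}$; recovering the stated constant requires keeping $\Vert\mathcal{A}(\bm{H}_T)\Vert_2$ unsimplified on both sides of the resulting quadratic inequality and tuning $\mu \propto \Vert\bm{H}_T\Vert_F/\Vert\bm{U}_i\Vert_F$ so that the upper RIP on $\Vert\mathcal{A}(\bm{H}_T+\mu\bm{U}_i)\Vert_2^2$ and the lower RIP on $\Vert\mathcal{A}(\bm{H}_T-\mu\bm{U}_i)\Vert_2^2$ combine multiplicatively as $\sqrt{(1-\delta_{tk})(1+\delta_{tk})}$ rather than additively. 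Once this algebraic calibration is completed, the remaining steps are routine arithmetic giving $\gamma_1 = 2/((1-\delta_{tk})\sqrt{1+\delta_{tk}})$.
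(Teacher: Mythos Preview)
Your proposal has a genuine gap at the invocation of Lemma~\ref{sparse_lemma}. That lemma requires $\bm{v}\in T(\alpha,s)$, in particular $\Vert\bm{v}\Vert_\infty\le\alpha$. With your choice $\bm{v}=\bm{h}_{T^C}$ and $\alpha=\Vert\bm{h}_{T^C}\Vert_1/((t-1)k)$ this demands $\Vert\bm{h}_{T^C}\Vert_\infty\le\Vert\bm{h}_{T^C}\Vert_1/((t-1)k)$, which is false whenever $\bm{h}_{T^C}$ has a few dominant entries (e.g.\ a single nonzero coordinate). Without the $\ell_\infty$ control there is no convex decomposition into pieces $\bm{u}_i$ with $\Vert\bm{u}_i\Vert_\infty\le\alpha$, and the crucial estimate $\Vert\bm{u}_i\Vert_2^2\le\Vert\bm{h}_{T^C}\Vert_1^2/((t-1)k)$ collapses. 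The paper repairs this by first peeling off the large entries: it sets $T_1=\{i\in T^c:|(\bm{h}_{T^c})_i|>\alpha\}$, shows $|T_1|<(t-1)k$, and applies the polytope lemma only to the small-entry tail $\bm{h}_{T_2}$ with the reduced sparsity budget $(t-1)k-|T_1|$. The analysis then runs on the enlarged index set $T\cup T_1$ (still of size at most $tk$), and at the very end uses $\Vert\bm{h}_T\Vert_2\le\Vert\bm{h}_{T\cup T_1}\Vert_2$.

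There is a second, subtler gap in your last paragraph. Running polarization on $\bm{H}_T\pm\mu\bm{U}_i$ term by term and optimizing $\mu$ produces exactly $|\langle\mathcal{A}(\bm{H}_T),\mathcal{A}(\bm{U}_i)\rangle|\le\delta_{tk}\Vert\bm{H}_T\Vert_F\Vert\bm{U}_i\Vert_F$, i.e.\ Lemma~\ref{inner_0}, and leads to the coefficient $\delta_{tk}/(1-\delta_{tk})$ in front of $\Vert\bm{h}_{T^C}\Vert_1/\sqrt{k}$ rather than the claimed $\delta_{tk}/\sqrt{1-\delta_{tk}^2}$; an $i$-dependent $\mu$ does not make the upper and lower RIP bounds combine multiplicatively as you suggest. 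The paper obtains the stated $\beta_1$ by a different device: it forms $\bm{v}_i=(1+\delta_{tk})\bm{h}_{T\cup T_1}+\delta_{tk}\bm{u}_i$ and $\bar{\bm{v}}_i=(1-\delta_{tk})\bm{h}_{T\cup T_1}-\delta_{tk}\bm{u}_i$, so that the convex average $\sum_i\lambda_i\bigl[\Vert\mathcal{A}(\bm{U}\mathcal{D}(\bm{v}_i)\bm{V}^\top)\Vert_2^2-\Vert\mathcal{A}(\bm{U}\mathcal{D}(\bar{\bm{v}}_i)\bm{V}^\top)\Vert_2^2\bigr]$ telescopes exactly to $4\delta_{tk}\langle\mathcal{A}(\bm{U}\mathcal{D}(\bm{h}_{T\cup T_1})\bm{V}^\top),\mathcal{A}(\bm{H})\rangle$, while the RIP bounds on the same sum give the lower bound $2\delta_{tk}(1-\delta_{tk}^2)\Vert\bm{h}_{T\cup T_1}\Vert_2^2-2\delta_{tk}^3\Vert\bm{u}_i\Vert_2^2$. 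Solving the resulting quadratic in $\Vert\bm{h}_{T\cup T_1}\Vert_2$ then delivers the constants in~\eqref{para1}.
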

\begin{proof}
 For given $\bm{H}\in\mathbb{R}^{m\times n}$, let $\bm{U}\mathcal{D}(\bm{h})\bm{V}^{\top}$ be SVD of $\bm{H}$, and for  given $t>1$, define 
\begin{align}
  T_1=\left\{i\in T^c\big|\vert\left( \bm{h}_{T^c}\right)_i\vert>\cfrac{\Vert \bm{h}_{T^c}\Vert_1}{(t-1)k}\right\},\label{T1eq}\\
  T_2=\left\{i\in T^c\big|\vert\left( \bm{h}_{T^c}\right)_i\vert\leq\cfrac{\Vert \bm{h}_{T^c}\Vert_1}{(t-1)k}\right\}\label{T2eq}.
\end{align}
Note that $T^c=T_1\cup T_2$ which yields 
\begin{equation}\label{Eh}  \bm{h}=\bm{h}_{T}+\bm{h}_{T^c}=\bm{h}_{T}+\bm{h}_{T_1}+\bm{h}_{T_2}.
\end{equation}
Besides, the fact $T_1\cap T=\emptyset$ implies $\Vert \bm{h}_T\Vert_2\leq\Vert \bm{h}_{T\cup T_1}\Vert_2$. Thus, to show~(\ref{hkleq}), it suffices to show that 
\begin{equation}\label{ineq_final}
    \Vert \bm{h}_{T\cup T_1}\Vert_2\leq \frac{\beta_1}{\sqrt{k}}\Vert \bm{h}_{T^c}\Vert_1+\gamma_1\Vert\mathcal{A}(\bm{H})\Vert_2,
\end{equation}
where $\beta_1$ and $\gamma_1$ are defined as~(\ref{para1}). We can apply Lemma~\ref{sparse_lemma} to show (\ref{ineq_final}) to be true. 

For this purpose, we first show that 
\begin{equation}\label{numT1}\vert T_1\vert<(t-1)k.\end{equation}
Obviously, the above inequality holds if $T_1=\emptyset$. Otherwise, we can apply~(\ref{T1eq}) to get that
\begin{equation*}
    \Vert \bm{h}_{T_1}\Vert_1>\vert T_1\vert \cfrac{\Vert \bm{h}_{T^c}\Vert_1}{(t-1)k}=\vert T_1\vert \cfrac{\Vert \bm{h}_{T_1\cup T_2}\Vert_1}{(t-1)k}\geq \vert T_1\vert \cfrac{\Vert \bm{h}_{T_1}\Vert_1}{(t-1)k},
\end{equation*}
which implies that (\ref{numT1}) holds. 

We now turn to show (\ref{ineq_final}) holds. The relation $T^c=T_1\cup T_2$ and $T_1\cap T_2=\emptyset$, along with the expression~(\ref{T1eq}), indicates that 
\begin{equation*}
\begin{aligned}
    \Vert \bm{h}_{T_2}\Vert_1&=\Vert \bm{h}_{T^c}\Vert_1-\Vert \bm{h}_{T_1}\Vert_1\\
    &\leq \Vert \bm{h}_{T^c}\Vert_1-\vert T_1\vert \cfrac{\Vert \bm{h}_{T^c}\Vert_1}{(t-1)k}\\
    &=((t-1)k-\vert T_1\vert)\cfrac{\Vert \bm{h}_{T^c}\Vert_1}{(t-1)k}.
    \end{aligned}
\end{equation*}
Since $tk$ is an integer, it follows from (\ref{numT1}) that $(t-1)k-\vert T_1\vert$ is a positive integer. 
Additionally, (\ref{T2eq})  implies that 
\begin{equation*}\Vert \bm{h}_{T_2}\Vert_{\infty}\leq \cfrac{\Vert \bm{h}_{T^c}\Vert_1}{(t-1)k}.
\end{equation*}
By setting 
\begin{equation*}
    \bm{v}=\bm{h}_{T_2},\ \alpha=\cfrac{\Vert \bm{h}_{T^c}\Vert_1}{(t-1)k},\ s=(t-1)k-\vert T_1\vert
\end{equation*}
and applying Lemma~\ref{sparse_lemma}, we have 
\begin{equation}\label{lambdaui}
    \bm{h}_{T_2}=\sum_{i=1}^{N} \lambda_i \bm{u}_i
\end{equation}
for some positive integer $N$, where $\sum_{i=1}^N\lambda_i=1$ with $0\leq\lambda_i\leq 1$ and 
\begin{equation}
     \bm{u}_i\in \mathcal{U}\left(\cfrac{\Vert \bm{h}_{T^c}\Vert_1}{(t-1)k},(t-1)k-\vert T_1\vert,\bm{h}_{T_2}\right)
\end{equation}
with $\mathcal{U}$ being defined as~(\ref{define_U}). Moreover, the expression~(\ref{define_U}) implies that 
\begin{equation}\label{norm_ui}
\begin{aligned}
    \Vert\bm{u}_i\Vert_2^2&\leq \Vert \bm{u}_i\Vert_0\Vert \bm{u}_i\Vert_{\infty}^2\\
    &\leq ((t-1)k-\vert T_1\vert) \left(\cfrac{\Vert \bm{h}_{T^c}\Vert_1}{(t-1)k}\right)^2\\
    &\leq \cfrac{(\Vert \bm{h}_{T^c}\Vert_1)^2}{(t-1)k}.
    \end{aligned}
\end{equation}
For simplicity, denote 
\begin{align}
    \bm{v}_i:=(1+\delta_{tk})\bm{h}_{T\cup T_1}+\delta_{tk}\bm{u}_i,\label{define_vi}\\
    \bar{\bm{v}}_i:=(1-\delta_{tk})\bm{h}_{T\cup T_1}-\delta_{tk}\bm{u}_i.\label{define_barvi}  
\end{align}
The relation $\Vert \bm{u}_i\Vert_0\leq (t-1)k-\vert T_1\vert$ combined with~(\ref{numT1}) and the fact $\vert T\vert\leq k$ yield that $\bm{v}_i$ and $\bar{\bm{v}}_i$ are $tk$-sparse for $1\leq i\leq N$. Thus, we have
\begin{equation}\label{inequalong}
    \begin{aligned}
        &\sum_{i=1}^N\lambda_i\left[\Vert \mathcal{A}\left(\bm{U}\mathcal{D}(\bm{v}_i)\bm{V}^{\top}\right)\Vert_2^2-\Vert \mathcal{A}\left(\bm{U}\mathcal{D}(\bar{\bm{v}}_i)\bm{V}^{\top}\right)\Vert_2^2\right]\\
        =& \sum_{i=1}^N\lambda_i\left[ \Vert \mathcal{A}\left(\bm{U}\mathcal{D}(\bm{h}_{T\cup T_1})\bm{V}^{\top}\right)+\delta_{tk}\mathcal{A}\left(\bm{U}\mathcal{D}(\bm{u}_i+\bm{h}_{T\cup T_1})\bm{V}^{\top}\right)\Vert_2^2-
        \Vert \mathcal{A}\left(\bm{U}\mathcal{D}(\bm{h}_{T\cup T_1})\bm{V}^{\top}\right)\right.\\ 
        &\left.-\delta_{tk}\mathcal{A}\left(\bm{U}\mathcal{D}(\bm{u}_i+\bm{h}_{T\cup T_1})\bm{V}^{\top}\right)\Vert_2^2\right]\\
        =&\sum_{i=1}^N\lambda_i\left[4\delta_{tk}\left( \mathcal{A}(\bm{U}\mathcal{D}(\bm{h}_{T\cup T_1})\bm{V}^{\top})\right)^{\top}\mathcal{A}\left(\bm{U}\mathcal{D}(\bm{u}_i+\bm{h}_{T\cup T_1})\bm{V}^{\top}\right)
        \right]\\
        \overset{(a)}{=}& 4\delta_{tk}\left( \mathcal{A}(\bm{U}\mathcal{D}(\bm{h}_{T\cup T_1})\bm{V}^{\top})\right)^{\top}\mathcal{A}\left(\bm{U}\mathcal{D}(\sum_{i=1}^{N}\lambda_i\bm{u}_i+\bm{h}_{T\cup T_1})\bm{V}^{\top}\right)\\
        \overset{(b)}{=}& 4\delta_{tk}\left( \mathcal{A}(\bm{U}\mathcal{D}(\bm{h}_{T\cup T_1})\bm{V}^{\top})\right)^{\top}\mathcal{A}\left(\bm{U}\mathcal{D}(\bm{h})\bm{V}^{\top}\right)\\
        =& 4\delta_{tk}\left( \mathcal{A}(\bm{U}\mathcal{D}(\bm{h}_{T\cup T_1})\bm{V}^{\top})\right)^{\top}\mathcal{A}(\bm{H})\\
        \overset{(c)}{\leq}&4\delta_{tk}\sqrt{1+\delta_{tk}}\Vert \bm{h}_{T\cup T_1}\Vert_2\Vert \mathcal{A}(\bm{H})\Vert_2,
    \end{aligned}
\end{equation}
where (a) follows from~(\ref{lambdai}), (b) follows from~(\ref{Eh}) and~(\ref{lambdaui}), (c) follows from Cauchy-Schwartz inequality and~(\ref{ripmap}). 

Next we will give a lower bound of the first formula in~(\ref{inequalong}). That is 
\begin{equation*}
    \begin{aligned}
      &\sum_{i=1}^N\lambda_i\left[\Vert \mathcal{A}\left(\bm{U}\mathcal{D}(\bm{v}_i)\bm{V}^{\top}\right)\Vert_2^2-\Vert \mathcal{A}\left(\bm{U}\mathcal{D}(\bar{\bm{v}}_i)\bm{V}^{\top}\right)\Vert_2^2\right]\\
      \overset{(a)}{\geq} & \sum_{i=1}^N\lambda_i(1-\delta_{tk})\Vert \bm{v}_i\Vert_2^2-\sum_{i=1}^N \lambda_i(1+\delta_{tk})\Vert \bar{\bm{v}}_i\Vert_2^2\\
      \overset{(b)}{=}& \sum_{i=1}^N\lambda_i\left(2(\delta_{tk}-\delta_{tk}^3)\Vert \bm{h}_{T\cup T_1}\Vert_2^2-2\delta_{tk}^3\Vert \bm{u}_i\Vert_2^2\right)\\
      \overset{(c)}{\geq}&2(\delta_{tk}-\delta_{tk}^3)\Vert \bm{h}_{T\cup T_1}\Vert_2^2-2\delta_{tk}^3 \cfrac{(\Vert \bm{h}_{T^c}\Vert_1)^2}{(t-1)k},
\end{aligned}
\end{equation*}
where (a) follows from~(\ref{ripmap}), (b) follows from~(\ref{define_vi}) and~(\ref{define_barvi}), (c) follows from~(\ref{lambdai}) and~(\ref{norm_ui}). The inclusion together with (\ref{inequalong}) indicates that
\begin{equation*}
\begin{aligned}
    2(\delta_{tk}-\delta_{tk}^3)\Vert \bm{h}_{T\cup T_1}\Vert_2^2-2\delta_{tk}^3 \cfrac{(\Vert \bm{h}_{T^c}\Vert_1)^2}{(t-1)k}
    \leq 4\delta_{tk}\sqrt{1+\delta_{tk}}\Vert \bm{h}_{T\cup T_1}\Vert_2\Vert \mathcal{A}(\bm{H})\Vert_2,
    \end{aligned}
\end{equation*}
which is exactly 
\begin{equation*}
    (1-\delta_{tk}^2)\Vert \bm{h}_{T\cup T_1}\Vert_2^2-2\sqrt{1+\delta_{tk}}\Vert \bm{h}_{T\cup T_1}\Vert_2\Vert \mathcal{A}(\bm{H})\Vert_2
    -\delta_{tk}^2 \cfrac{(\Vert \bm{h}_{T^c}\Vert_1)^2}{(t-1)k}\leq 0.
\end{equation*}
Hence we have
\begin{equation*}
    \begin{aligned}
      \Vert \bm{h}_{T\cup T_1}\Vert_2
 \leq& \bigg[\left(  (2\sqrt{1+\delta_{tk}}\Vert \mathcal{A}(\bm{H})\Vert_2)^2
 +4\delta_{tk}^2(1-\delta_{tk}^2)          \cfrac{(\Vert \bm{h}_{T^c}\Vert_1)^2}{(t-1)k}  \right)^{\frac{1}{2}}+2\sqrt{1+\delta_{tk}}\Vert \mathcal{A}(\bm{H})\Vert_2 \bigg]\bigg/2(1-\delta_{tk}^2)\\
 \leq& \cfrac{\delta_{tk}}{\sqrt{(1-\delta_{tk}^2)(t-1)k}}\Vert \bm{h}_{T^c}\Vert_1+\cfrac{2}{(1-\delta_{tk})\sqrt{1+\delta_{tk}}}\Vert \mathcal{A}(\bm{H})\Vert_2,  
    \end{aligned}
\end{equation*}
where the last inequality follows from $\sqrt{x^2+y^2}\leq x+y$ for any $x\geq 0$ and $y\geq 0$.

By virtue of~(\ref{para1}) and~(\ref{ineq_final}), we obtain the desired results. This completes the proof.
\end{proof}
\begin{lemma}\label{upper_hc}
Let $k$ be a positive integer and the desired matrix $\bm{X}^o$ satisfy $\mathcal{A}(\bm{X}^o)+\bm{s}=\bm{b}$ with perturbation $\Vert\bm{s}\Vert_2\leq\epsilon$. Set $\bm{H}:=\bm{X}^*-\bm{X}^o$ with $\bm{X}^*\in\arg\min\mathcal{J}(\bm{X})$. Denote $\bm{U}_o\mathcal{D}(\bm{x}^0)\bm{V}_o^{\top}$ and $\bm{U}_H\mathcal{D}(\bm{h})\bm{V}_H^{\top}$ as the SVD of $\bm{X}^o$ and $\bm{H}$, respectively. Define $\Omega:=\textup{supp}(\bm{x}^o_{\max(k)})$ and $\Gamma:=\textup{supp}(\bm{h}_{\max(k)})$, then we have 
\begin{equation}\label{inequ_h}
\begin{aligned}
    \Vert \mathcal{A}(\bm{H})\Vert_2^2-2\epsilon \Vert\mathcal{A}(\bm{H})\Vert_2
    \leq 2\lambda(2\Vert \bm{X}^o\Vert_*+\Vert\bm{h}_{\Gamma}\Vert_1-\Vert\bm{h}_{\Gamma^c}\Vert_1+\Vert\bm{h}\Vert_2)
\end{aligned}
\end{equation}
and
\begin{equation}\label{inequ_hc}
   \Vert\bm{h}_{\Gamma^c}\Vert_1\leq  2\Vert \bm{X}^o\Vert_*+\Vert\bm{h}_{\Gamma}\Vert_1+\Vert\bm{h}\Vert_2+\cfrac{\epsilon}{\lambda}\Vert\mathcal{A}(\bm{H})\Vert_2.
\end{equation}
\end{lemma}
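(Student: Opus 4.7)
The plan is to leverage the global optimality of $\bm{X}^*$ for $\mathcal{J}$ and then combine it with standard triangle-inequality manipulations of the nuclear and Frobenius norms.

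First I would exploit $\mathcal{J}(\bm{X}^*)\le \mathcal{J}(\bm{X}^o)$. Since $\mathcal{A}(\bm{X}^o)-\bm{b}=-\bm{s}$ and $\mathcal{A}(\bm{X}^*)-\bm{b}=\mathcal{A}(\bm{H})-\bm{s}$, expanding the quadratic misfit term gives
\begin{equation*}
\tfrac{1}{2\lambda}\bigl(\Vert\mathcal{A}(\bm{H})\Vert_2^2-2\langle\mathcal{A}(\bm{H}),\bm{s}\rangle\bigr)
\le\Vert\bm{X}^o\Vert_*-\Vert\bm{X}^o\Vert_F-\Vert\bm{X}^*\Vert_*+\Vert\bm{X}^*\Vert_F,
\end{equation*}
after the common $\Vert\bm{s}\Vert_2^2$ term cancels. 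Multiplying by $2\lambda$ and applying Cauchy--Schwarz in the form $-2\langle\mathcal{A}(\bm{H}),\bm{s}\rangle\ge -2\epsilon\Vert\mathcal{A}(\bm{H})\Vert_2$ isolates $\Vert\mathcal{A}(\bm{H})\Vert_2^2-2\epsilon\Vert\mathcal{A}(\bm{H})\Vert_2$ on the left, so it remains to upper-bound the right-hand side.

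Next I would split $\bm{H}=\bm{H}_\Gamma+\bm{H}_{\Gamma^c}$ using the SVD of $\bm{H}$, where $\bm{H}_\Gamma:=\bm{U}_H\mathcal{D}(\bm{h}_\Gamma)\bm{V}_H^\top$ and analogously for $\bm{H}_{\Gamma^c}$; by construction $\Vert\bm{H}_\Gamma\Vert_*=\Vert\bm{h}_\Gamma\Vert_1$ and $\Vert\bm{H}_{\Gamma^c}\Vert_*=\Vert\bm{h}_{\Gamma^c}\Vert_1$. The Frobenius term is handled by the ordinary triangle inequality, $\Vert\bm{X}^*\Vert_F-\Vert\bm{X}^o\Vert_F\le\Vert\bm{H}\Vert_F=\Vert\bm{h}\Vert_2$. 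The key step is a reverse triangle bound for the nuclear norm: writing $\bm{X}^*=(\bm{X}^o+\bm{H}_\Gamma)+\bm{H}_{\Gamma^c}$, we get
\begin{equation*}
\Vert\bm{X}^*\Vert_*\ge\Vert\bm{H}_{\Gamma^c}\Vert_*-\Vert\bm{X}^o+\bm{H}_\Gamma\Vert_*
\ge\Vert\bm{h}_{\Gamma^c}\Vert_1-\Vert\bm{X}^o\Vert_*-\Vert\bm{h}_\Gamma\Vert_1.
\end{equation*}
Substituting these two bounds into the right-hand side collapses it to $2\Vert\bm{X}^o\Vert_*+\Vert\bm{h}_\Gamma\Vert_1-\Vert\bm{h}_{\Gamma^c}\Vert_1+\Vert\bm{h}\Vert_2$, which is exactly inequality \eqref{inequ_h}.

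For the second inequality \eqref{inequ_hc} I would return to the same starting inequality before dropping $\Vert\mathcal{A}(\bm{H})\Vert_2^2$, move the $\Vert\bm{h}_{\Gamma^c}\Vert_1$ term to the left, use $\Vert\mathcal{A}(\bm{H})\Vert_2^2\ge 0$ to throw it away, and finally divide by $2\lambda$. The main obstacle is the nuclear-norm lower bound, since $\Vert\cdot\Vert_*$ does not split additively over the $\Gamma/\Gamma^c$ decomposition of singular values; the saving grace is that the decomposition is performed in the SVD basis of $\bm{H}$ itself, so $\bm{H}_\Gamma$ and $\bm{H}_{\Gamma^c}$ have disjoint column and row spaces, making $\Vert\bm{H}_\Gamma+\bm{H}_{\Gamma^c}\Vert_*$ decouple into $\Vert\bm{h}_\Gamma\Vert_1$ and $\Vert\bm{h}_{\Gamma^c}\Vert_1$. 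Notice that $\Omega$ plays no role in the conclusion; it is introduced here solely to be reused in the subsequent recovery-error theorem built on this lemma.
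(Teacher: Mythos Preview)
Your proof is correct and follows the same overall strategy as the paper: start from $\mathcal{J}(\bm{X}^*)\le\mathcal{J}(\bm{X}^o)$, expand the quadratic misfit, apply Cauchy--Schwarz on the noise term, and bound the nuclear/Frobenius difference by triangle inequalities; inequality \eqref{inequ_hc} then follows from \eqref{inequ_h} by dropping $\Vert\mathcal{A}(\bm{H})\Vert_2^2\ge 0$ exactly as you describe.

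The one genuine difference is in how the nuclear-norm term is decomposed. The paper splits $\bm{X}^*=\bm{A}+\bm{B}$ using the index set $\Omega$ (the top-$k$ singular values of $\bm{X}^o$), obtaining $\rho_r\le 2\Vert\bm{X}^o\Vert_*+\Vert\bm{h}_\Omega\Vert_1-\Vert\bm{h}_{\Omega^c}\Vert_1+\Vert\bm{h}\Vert_2$, and only at the last step replaces $\Omega$ by $\Gamma$ via the maximality of $\Gamma$ (i.e.\ $\Vert\bm{h}_\Omega\Vert_1\le\Vert\bm{h}_\Gamma\Vert_1$ since $|\Omega|\le k$). You instead split $\bm{H}$ directly along $\Gamma$ and never touch $\Omega$; this is cleaner and, as you correctly observe, confirms that $\Omega$ is irrelevant to the lemma itself. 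Both routes land on the same bound, but yours avoids an unnecessary intermediate set.
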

\begin{proof}
According to the definition of $\bm{X}^*$, we obtain that $\mathcal{J}(\bm{X}^*)\leq \mathcal{J}(\bm{X}^o)$, i.e., 
\begin{equation*}
\begin{aligned}
    \Vert \bm{X}^*\Vert_*-\Vert \bm{X}^*\Vert_F+\cfrac{1}{2\lambda}\Vert \mathcal{A}(\bm{X}^*)-\bm{b}\Vert_2^2
    \leq \Vert \bm{X}^o\Vert_*-\Vert \bm{X}^o\Vert_F+\cfrac{1}{2\lambda}\Vert \mathcal{A}(\bm{X}^o)-\bm{b}\Vert_2^2,
    \end{aligned}
\end{equation*}
which after simplification gives 
\begin{equation}\label{inequa_objec}
\begin{aligned}
    \cfrac{1}{2\lambda}\left(\Vert \mathcal{A}(\bm{H})\Vert_2^2-2\langle \bm{s},\mathcal{A}(\bm{H})\rangle\right)
    \leq \Vert \bm{X}^o\Vert_*-\Vert \bm{X}^o\Vert_F-\Vert \bm{X}^*\Vert_*+\Vert \bm{X}^*\Vert_F.
    \end{aligned}
\end{equation}
Denote the left and right sides of~(\ref{inequa_objec}) as $\rho_l$ and $\rho_r$ respectively. It is known from the Cauchy-Schwartz inequality that 
\begin{equation}\label{ineuqrho_l}
\begin{aligned}
    \rho_l\geq&\frac{1}{2\lambda}\left(\Vert \mathcal{A}(\bm{H})\Vert_2^2-2\Vert\bm{s}\Vert_2\Vert\mathcal{A}(\bm{H})\Vert_2\right)\\
    \geq& \frac{1}{2\lambda}\left(\Vert \mathcal{A}(\bm{H})\Vert_2^2-2\epsilon\Vert\mathcal{A}(\bm{H})\Vert_2\right).
    \end{aligned}
\end{equation}
Besides, direct calculations lead to the expression of $\bm{H}+\bm{X}^o=\bm{A}+\bm{B}$ with
\begin{equation*}
    \bm{A}=\bm{U}_o\mathcal{D}(\bm{x}^o_{\Omega})\bm{V}_o^{\top}+\bm{U}_H\mathcal{D}(\bm{h}_{\Omega})\bm{V}_H^{\top}
\end{equation*}
and
\begin{equation*}
    \bm{B}=\bm{U}_o\mathcal{D}(\bm{x}^o_{\Omega^c})\bm{V}_o^{\top}+\bm{U}_H\mathcal{D}(\bm{h}_{\Omega^c})\bm{V}_H^{\top}.
\end{equation*}
Then, we have
\begin{align}\label{inequrho_r}
    \rho_r
    =&\Vert \bm{X}^o\Vert_*-\Vert \bm{X}^o\Vert_F-\Vert \bm{H}+\bm{X}^o\Vert_*+\Vert \bm{H}+\bm{X}^o\Vert_F\notag\\
    \leq& \Vert \bm{X}^o\Vert_*-\Vert \bm{X}^o\Vert_F-\Vert \bm{B}\Vert_*+\Vert \bm{A}\Vert_*+\Vert \bm{H}\Vert_F+\Vert \bm{X}^o\Vert_F\notag\\
    \leq& \Vert \bm{X}^o\Vert_*-\Vert \bm{h}_{\Omega^c}\Vert_1+\Vert \bm{x}^o_{\Omega^c}\Vert_1+\Vert \bm{x}^o_{\Omega}\Vert_1+\Vert \bm{h}_{\Omega}\Vert_1+\Vert \bm{h}\Vert_2\notag\\
    \leq& 2\Vert \bm{X}^o\Vert_*+\Vert \bm{h}_{\Gamma}\Vert_1-\Vert \bm{h}_{\Gamma^c}\Vert_1+\Vert \bm{h}\Vert_2.
\end{align}
Hence combining~(\ref{ineuqrho_l}) with~(\ref{inequrho_r}), we can get the desired inequalities (\ref{inequ_h}) and (\ref{inequ_hc}) hold trivially from~(\ref{inequ_h}). 
\end{proof}
We shall focus on investigating the recovery performance of problem~(\ref{unonstrainpro}) and characterizing the recovery errors of this method. 
\begin{theorem}
    Assume that $k$ is a positive integer with $k\geq 6$, and $\bm{b}$ is given by~(\ref{noise_AX}) with $\Vert \bm{s}\Vert_2 \leq \epsilon$. If linear map $\mathcal{A}$ obeys 
    \begin{equation}\label{upper_delta}
        \delta_{tk}<\sqrt{\cfrac{t-1}{t+\theta_k^2-1}}
    \end{equation}
    for a certain integer $t>1$, where $\theta_k$ is given by 
    \begin{equation}\label{thetak}
        \theta_k=\cfrac{\sqrt{k}+\sqrt{2}-1}{\sqrt{k}-\sqrt{2}-1}.
    \end{equation}
 Then we have 
 \begin{equation}
     \Vert \bm{X}^*-\bm{X}^o\Vert_F\leq C_1\Vert \bm{X}^o\Vert_*+C_2\lambda
 \end{equation}
 where $C_1$ and $C_2$ are determined by 
 \begin{equation*}
     C_1=\cfrac{2(\hat{\beta}_1\hat{\gamma}_1+\xi_1)}{\sqrt{k}(1-\beta_1)\hat{\gamma}_1\kappa_1},\quad 
     C_2=\cfrac{2\xi_1\hat{\gamma}_1}{\sqrt{k}(1-\beta_1)\kappa_1},
 \end{equation*}
 with $\hat{\beta}_1$ and $\hat{\gamma}_1$ being given by~(\ref{def_hatbeta1}) and~(\ref{define_hatgamma}), and $\kappa_1$ and $\xi_1$ being given in~(\ref{def_xi1}) and~(\ref{def_kappa1}), respectively.
 \end{theorem}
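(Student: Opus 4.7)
The plan is to combine the two technical lemmas just established—Lemma~\ref{upper_hT} (the geometric head estimate derived from RIP) and Lemma~\ref{upper_hc} (two optimality-based inequalities coming from $\mathcal{J}(\bm{X}^*) \leq \mathcal{J}(\bm{X}^o)$)—with the sparse-representation Lemma~\ref{befo_lemma}, and then to close the resulting self-referential system of inequalities by invoking the stated RIP bound. Set $\bm{H} := \bm{X}^* - \bm{X}^o$ with singular value vector $\bm{h}$, and $\Gamma := \textup{supp}(\bm{h}_{\max(k)})$. First I would apply Lemma~\ref{upper_hT} with $T=\Gamma$, giving
$\|\bm{h}_\Gamma\|_2 \leq \frac{\beta_1}{\sqrt{k}}\|\bm{h}_{\Gamma^c}\|_1 + \gamma_1\|\mathcal{A}(\bm{H})\|_2$.
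To eliminate the tail $\ell_1$-norm I would inject inequality (\ref{inequ_hc}) from Lemma~\ref{upper_hc} and use $\|\bm{h}_\Gamma\|_1 \leq \sqrt{k}\|\bm{h}_\Gamma\|_2$; this reduces the right-hand side to $\beta_1\|\bm{h}_\Gamma\|_2 + (\text{terms in }\|\bm{X}^o\|_*, \|\bm{h}\|_2, \|\mathcal{A}(\bm{H})\|_2)$.

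Next I would turn the dependence on $\|\bm{h}\|_2$ into a dependence on $\|\bm{h}_\Gamma\|_2$. Rewriting (\ref{inequ_hc}) as $\|\bm{h}_{\Gamma^c}\|_1 \leq \|\bm{h}_\Gamma\|_1 + \eta$ with $\eta := 2\|\bm{X}^o\|_* + \|\bm{h}\|_2 + (\epsilon/\lambda)\|\mathcal{A}(\bm{H})\|_2$ and applying Lemma~\ref{befo_lemma} with $\alpha=2$, $r=k$ to the sorted entries of $|\bm{h}|$, I obtain $\|\bm{h}_{\Gamma^c}\|_2 \leq \|\bm{h}_\Gamma\|_2 + \eta/\sqrt{k}$ and hence $\|\bm{h}\|_2^2 \leq \|\bm{h}_\Gamma\|_2^2 + (\|\bm{h}_\Gamma\|_2 + \eta/\sqrt{k})^2$. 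Collecting the $\|\bm{h}\|_2$ that re-enters through $\eta$ and dominating the mixed cross-term, one arrives at an inequality of the form $\|\bm{h}\|_2 \leq \theta_k\|\bm{h}_\Gamma\|_2 + (\text{data})$, where the explicit ratio $\theta_k = (\sqrt{k}+\sqrt{2}-1)/(\sqrt{k}-\sqrt{2}-1)$ drops out of the linear rearrangement; the hypothesis $k\geq 6$ is exactly what forces the denominator $\sqrt{k}-\sqrt{2}-1$ to be positive.

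Third, I would control $\|\mathcal{A}(\bm{H})\|_2$ through (\ref{inequ_h}): discard the (helpful) negative term $-\|\bm{h}_{\Gamma^c}\|_1$, substitute $\|\bm{h}_\Gamma\|_1 \leq \sqrt{k}\|\bm{h}_\Gamma\|_2$ and the bound $\|\bm{h}\|_2 \leq \theta_k\|\bm{h}_\Gamma\|_2 + (\text{data})$ from the previous step, then complete the square on $\|\mathcal{A}(\bm{H})\|_2^2 - 2\epsilon\|\mathcal{A}(\bm{H})\|_2$ and apply a weighted AM--GM. This yields an estimate $\|\mathcal{A}(\bm{H})\|_2 \leq \xi_1\lambda + \kappa_1\|\bm{h}_\Gamma\|_2 + (\text{data})$ that is linear in $\lambda$ (with $\xi_1,\kappa_1$ matching the constants stated in the theorem). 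Feeding this back into the bound from Step~1 produces a closed scalar inequality of the form $\|\bm{h}_\Gamma\|_2 \leq \beta_1\theta_k\,\|\bm{h}_\Gamma\|_2 + (\text{data linear in }\|\bm{X}^o\|_*\text{ and }\lambda)$. A direct algebraic check, using $\beta_1 = \delta_{tk}/\sqrt{(t-1)(1-\delta_{tk}^2)}$, shows that the RIP hypothesis $\delta_{tk} < \sqrt{(t-1)/(t+\theta_k^2-1)}$ is \emph{exactly} equivalent to $\beta_1\theta_k < 1$. This contraction lets me solve for $\|\bm{h}_\Gamma\|_2$, and back-substitution into the Step~2 bound produces $\|\bm{H}\|_F = \|\bm{h}\|_2 \leq C_1\|\bm{X}^o\|_* + C_2\lambda$ with constants $C_1, C_2$ assembled from $\hat\beta_1, \hat\gamma_1, \kappa_1, \xi_1$ as stated.

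The hardest part will be the triple self-coupling: $\|\bm{h}\|_2$, $\|\bm{h}_\Gamma\|_2$, and $\|\mathcal{A}(\bm{H})\|_2$ each appear on both sides of one of the three master inequalities, and the correct contraction coefficient $\beta_1\theta_k$ only emerges after careful combinatorial bookkeeping of the constants (especially verifying that the $\sqrt{2}-1$ factor in $\theta_k$ comes out of the application of Lemma~\ref{befo_lemma} as I have sketched). A secondary pitfall is the $(\epsilon/\lambda)\|\mathcal{A}(\bm{H})\|_2$ cross-term in (\ref{inequ_hc}) together with the $2\epsilon\|\mathcal{A}(\bm{H})\|_2$ term in (\ref{inequ_h}): these must be absorbed via a weighted Young/AM--GM step so that the final bound is linear in $\lambda$ alone—otherwise the theorem as stated, with no $\epsilon$ on the right-hand side, would not follow.
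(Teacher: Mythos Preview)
Your overall strategy—combine Lemma~\ref{upper_hT}, Lemma~\ref{upper_hc}, and Lemma~\ref{befo_lemma}, then close the resulting self-referential system via the RIP hypothesis—is exactly the paper's approach, and your observation that the RIP bound $\delta_{tk}<\sqrt{(t-1)/(t+\theta_k^2-1)}$ is algebraically equivalent to $\beta_1\theta_k<1$ is correct and is indeed the crux. Two pieces of your intermediate bookkeeping, however, do not match the paper and would not produce the stated constants.

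First, $\theta_k$ does \emph{not} appear as the coefficient of $\|\bm h_\Gamma\|_2$ in your Step~2. Carrying out exactly the calculation you describe (use $\sqrt{a^2+(a+c)^2}\leq\sqrt{2}\,a+c$, then absorb the $\|\bm h\|_2$ that re-enters through $\eta$) gives coefficient $\sqrt{2k}/(\sqrt{k}-1)$, not $\theta_k$; for $k=6$ these differ by more than an order of magnitude. The paper never isolates $\|\bm h\|_2$ against $\|\bm h_\Gamma\|_2$ here. It keeps everything packaged in $\omega:=2\|\bm X^o\|_*+\|\bm h\|_2$, first solves for $\|\bm h_\Gamma\|_2$ in terms of $\omega$ and $\|\mathcal A(\bm H)\|_2$ using $\beta_1<1$, then bounds $\|\bm h\|_2$ in terms of the same data, and closes on $\|\bm h\|_2$ only at the very end. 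The quantity $\theta_k$ emerges there, as the threshold for $\kappa_1>0$ (using $\xi_1\leq\sqrt{2}\,\hat\gamma_1$).

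Second, in Step~3 the paper does \emph{not} simply discard the $-\|\bm h_{\Gamma^c}\|_1$ term of~(\ref{inequ_h}). It bounds $\|\bm h_\Gamma\|_1\leq\sqrt{k}\,\|\bm h_\Gamma\|_2$ and then reapplies Lemma~\ref{upper_hT} to replace $\sqrt{k}\,\|\bm h_\Gamma\|_2$ by $\beta_1\|\bm h_{\Gamma^c}\|_1+\sqrt{k}\,\gamma_1\|\mathcal A(\bm H)\|_2$; the combined tail contribution becomes $(\beta_1-1)\|\bm h_{\Gamma^c}\|_1\leq 0$, which is only then dropped. This is what makes the resulting quadratic $\|\mathcal A(\bm H)\|_2^2-2\hat\gamma_1\lambda\|\mathcal A(\bm H)\|_2-2\lambda\omega\leq 0$ depend on $\omega$ and $\lambda$ alone, with no residual $\|\bm h_\Gamma\|_2$, and yields the clean bound $\|\mathcal A(\bm H)\|_2\leq 2\hat\gamma_1\lambda+\omega/\hat\gamma_1$. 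Your route (drop $-\|\bm h_{\Gamma^c}\|_1$ outright, keep $\sqrt{k}\,\|\bm h_\Gamma\|_2$) leaves $\|\bm h_\Gamma\|_2$ inside the quadratic and would lead to constants different from the stated $C_1,C_2$.
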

\begin{proof}
Denote $\bm{H}=\bm{X}^*-\bm{X}^o$. Let $\bm{X}^o=\bm{U}_o\mathcal{D}(\bm{x}^0)\bm{V}_o^{\top}$ and $\bm{H}=\bm{U}_H\mathcal{D}(\bm{h})\bm{V}_H^{\top}$ be the SVD of $\bm{X}^o$ and $\bm{H}$, respectively. Define $\Omega:=\textup{supp}(\bm{x}^o_{\max(k)})$, $\Gamma:=\textup{supp}(\bm{h}_{\max(k)})$ and  $\omega:=2\Vert \bm{X}^o\Vert_*+\Vert\bm{h}\Vert_2$. On the one hand, noting the fact $\Vert\bm{h}_{\Gamma}\Vert_1\leq \sqrt{k}\Vert \bm{h}_{\Gamma}\Vert_2$, together with Lemma~\ref{upper_hT} and Lemma~\ref{upper_hc}, we obtain 
\begin{equation}
    \Vert\bm{h}_{\Gamma}\Vert_2\leq\cfrac{\beta_1}{\sqrt{k}}\left(\omega+\sqrt{k}\Vert\bm{h}_{\Gamma}\Vert_2+\frac{\epsilon}{\lambda}\Vert\mathcal{A}(\bm{H})\Vert_2\right)+\gamma_1\Vert\mathcal{A}(\bm{H})\Vert_2,
\end{equation}
which is exactly 
\begin{equation}
    \Vert\bm{h}_{\Gamma}\Vert_2\leq\cfrac{\beta_1\epsilon+\gamma_1\sqrt{k}\lambda}{(1-\beta_1)\sqrt{k}\lambda}\Vert\mathcal{A}(\bm{H})\Vert_2+\frac{\beta_1}{(1-\beta_1)\sqrt{k}} \omega
\end{equation}
due to $\beta_1<1$ by~(\ref{upper_delta}) and~(\ref{thetak}).
On the other hand, invoking Lemma~\ref{befo_lemma} and~(\ref{inequ_hc}), it yields 
\begin{equation*}
    \Vert \bm{h}_ {\Gamma^c}\Vert_2\leq \sqrt{k}\left(  \sqrt{\cfrac{\Vert\bm{h}_{\Gamma}\Vert_2^2}{k}}+\cfrac{\omega+\frac{\epsilon}{\lambda}\Vert \mathcal{A}(\bm{H})\Vert_2}{k}\right),
\end{equation*}
that is, 
\begin{equation*}
    \Vert \bm{h}_ {\Gamma^c}\Vert_2\leq \Vert\bm{h}_{\Gamma}\Vert_2+\frac{1}{\sqrt{k}}\left(  \omega+\frac{\epsilon}{\lambda}\Vert \mathcal{A}(\bm{H})\Vert_2\right).
\end{equation*}
Define
\begin{equation}\label{def_hatbeta1}
    \hat{\beta}_1:=(\sqrt{2}-1)\beta_1+1.
\end{equation}
Therefore,
we obtain that 
\begin{equation}\label{vert_h_leq}
    \begin{aligned}
\Vert \bm{h}\Vert_2
=&\sqrt{\Vert \bm{h}_ {\Gamma}\Vert_2^2+\Vert \bm{h}_ {\Gamma^c}\Vert_2^2} \\
 \leq& \sqrt{ \Vert \bm{h}_ {\Gamma}\Vert_2^2+ \left(\Vert\bm{h}_{\Gamma}\Vert_2+\frac{1}{\sqrt{k}}(  \omega+\frac{\epsilon}{\lambda}\Vert \mathcal{A}(\bm{H})\Vert_2)\right)^2    }\\
 \leq& \sqrt{2}\Vert \bm{h}_ {\Gamma}\Vert_2+\frac{1}{\sqrt{k}}(\omega+\frac{\epsilon}{\lambda}\Vert \mathcal{A}(\bm{H})\Vert_2)\\
 \leq &\cfrac{\sqrt{2k}\gamma_1\lambda+\epsilon\left((\sqrt{2}-1)\beta_1+1\right)}{\lambda\sqrt{k}(1-\beta_1)}\Vert \mathcal{A}(\bm{H})\Vert_2+\frac{(\sqrt{2}-1)\beta_1+1}{\sqrt{k}(1-\beta_1)}\omega\\
 =&
\cfrac{\sqrt{2k}\gamma_1\lambda+\hat{\beta}_1\epsilon}{\lambda\sqrt{k}(1-\beta_1)}\Vert \mathcal{A}(\bm{H})\Vert_2+\frac{\hat{\beta}_1}{\sqrt{k}(1-\beta_1)} \omega.
    \end{aligned}
\end{equation}
By employing~(\ref{inequ_h}) and Lemma~\ref{upper_hT}, we have  
\begin{equation*}
\begin{aligned}
    \Vert\mathcal{A}(\bm{H})\Vert_2^2-2\epsilon\Vert \mathcal{A}(\bm{H})\Vert_2
    \leq 2 \sqrt{k}\lambda(\frac{\beta_1}{\sqrt{k}}\Vert\bm{h}_{\Gamma^c}\Vert_1+\gamma_1\Vert \mathcal{A}(\bm{H})\Vert_2)-2\lambda\Vert \bm{h}_{\Gamma^c}\Vert_1+2\lambda\omega,
\end{aligned}
\end{equation*}
that is, 
\begin{equation*}
     \Vert\mathcal{A}(\bm{H})\Vert_2^2-(2\sqrt{k}\lambda\gamma_1+2\epsilon)\Vert \mathcal{A}(\bm{H})\Vert_2-2\lambda\omega\leq 2\lambda(\beta_1-1)\Vert\bm{h}_{\Gamma^c}\Vert_1.
\end{equation*}
Define
\begin{equation}\label{define_hatgamma}
    \eta:=\frac{\epsilon}{\lambda},\ \hat{\gamma}_1:=\sqrt{k}\gamma_1+\eta.
\end{equation}
It yields 
\begin{equation*}
    \Vert\mathcal{A}(\bm{H})\Vert_2^2-2\hat{\gamma}_1\lambda\Vert \mathcal{A}(\bm{H})\Vert_2-2\lambda\omega\leq0.
\end{equation*}
Thus, 
\begin{equation*}
    \begin{aligned}
        \Vert\mathcal{A}(\bm{H})\Vert_2&\leq\frac{2\hat{\gamma}_1\lambda+\sqrt{4\hat{\gamma}_1^2\lambda^2+8\lambda\omega}}{2}\\
        &\leq \hat{\gamma}_1\lambda+\sqrt{(\hat{\gamma}_1\lambda+\frac{\omega}{\hat{\gamma}_1})^2}\\
        &=2\hat{\gamma}_1\lambda+\frac{\omega}{\hat{\gamma}_1},
    \end{aligned}
\end{equation*}
which indicates that 
\begin{equation*}
\begin{aligned}
\Vert\bm{h}\Vert_2
\leq&\cfrac{\sqrt{2k}\gamma_1\lambda+\hat{\beta}_1\epsilon}{\lambda\sqrt{k}(1-\beta_1)}(2\hat{\gamma}_1\lambda+\frac{\omega}{\hat{\gamma}_1})+\frac{\hat{\beta}_1}{\sqrt{k}(1-\beta_1)} \omega\\
=&\left[\frac{\hat{\beta}_1}{\sqrt{k}(1-\beta_1)}+\frac{\sqrt{2k}\gamma_1+\hat{\beta}_1\cfrac{\epsilon}{\lambda}}{\sqrt{k}\hat{\gamma}_1(1-\beta_1)}\right] \omega+\frac{\sqrt{2k}\gamma_1+\hat{\beta}_1\cfrac{\epsilon}{\lambda}}{\sqrt{k}(1-\beta_1)}2\hat{\gamma}_1\lambda\\
=& \frac{\hat{\beta}_1(\hat{\gamma}_1+\eta)+\sqrt{2k}\gamma_1}{\sqrt{k}\hat{\gamma}_1(1-\beta_1)}\omega+\frac{2\hat{\gamma}_1(\sqrt{2k}\gamma_1+\hat{\beta}_1\eta)}{\sqrt{k}(1-\beta_1)}\lambda. 
\end{aligned}
\end{equation*}
Define 
\begin{equation}\label{def_xi1}
    \xi_1:=\sqrt{2k}\gamma_1+\hat{\beta}_1\eta.
\end{equation}
Then, we have
\begin{equation*}
\begin{aligned}
    \left(1-\frac{\hat{\beta}_1\hat{\gamma}_1+\xi_1}{\sqrt{k}(1-\beta_1)\hat{\gamma}_1}\right)\Vert\bm{h}\Vert_2   \leq\frac{2(\hat{\beta}_1\hat{\gamma}_1+\xi_1)}{\sqrt{k}(1-\beta_1)\hat{\gamma}_1}\Vert \bm{X}^o\Vert_*+\frac{2\hat{\gamma}_1\xi_1}{\sqrt{k}(1-\beta_1)}\lambda.
\end{aligned}
\end{equation*}
Denote 
\begin{equation}\label{def_kappa1}
    \kappa_1:=1-\frac{\hat{\beta}_1\hat{\gamma}_1+\xi_1}{\sqrt{k}(1-\beta_1)\hat{\gamma}_1},    
\end{equation} 
it is easy to verify that $\kappa_1>0$ by applying the relation~(\ref{upper_delta}) and~(\ref{thetak}) and the fact $\xi_1\leq \sqrt{2}\hat{\gamma}_1$. Therefore,
\begin{equation*}
    \Vert \bm{H}\Vert_F
    \leq\frac{2(\hat{\beta}_1\hat{\gamma}_1+\xi_1)}{\sqrt{k}(1-\beta_1)\hat{\gamma}_1\kappa_1}\Vert \bm{X}^o\Vert_*+\frac{2\hat{\gamma}_1\xi_1}{\sqrt{k}(1-\beta_1)\kappa_1}\lambda
\end{equation*}
and we complete the proof.
\end{proof}

\section{Conclusion}
In this paper, we focused on analysis of theoretical guarantees for reconstruction of low rank matrix from noisy measurements via $L_{*-F}$ minimization. Firstly, we briefly presented the relationship of the optimal solution among several $L_{*-F}$ minimization problems. Secondly, we gave the sufficient conditions of stable recovery and the recovery error estimation for the general constrained $L_{*-F}$ minimization problems. Besides, we considered the robust low rank matrix recovery by regularized $L_{*-F}$ minimization model, approximate error estimation of which was obtained under the framework of powerful RIP tools. To our knowledge, this theoretical result has not been studied before. A further issue worth considering is developing a tighter recovery error estimation for this regularized model. It is also interesting to see if the techniques in this paper can be applied in other settings.


%


\ifCLASSOPTIONcaptionsoff
  \newpage
\fi


\begin{thebibliography}{1}

\bibitem{chang2018unified}
X. Chang, Y. Zhong, Y. Wang, and S. Lin, 
``Unified low-rank matrix
estimate via penalized matrix least squares approximation," 
\emph{IEEE Trans. Neural Netw. Learn. Syst.}, vol. 30, no. 2, pp. 474–485, 2018.

\bibitem{candes2010power}
E. J. Cand{\`e}s and T. Tao, 
``The power of convex relaxation: Near-optimal 
matrix completion,"
\emph{IEEE Trans. Inf. Theory}, vol. 56,
no. 5, pp. 2053–2080, 2010.

\bibitem{candes2012exact}
E. J. Cand{\`e}s and B. Recht, 
``Exact matrix completion via convex optimization,"
\emph{Commun. ACM}, vol. 55, no. 6, pp. 111–119, 2012.

\bibitem{candes2011robust}
E. J. Cand{\`e}s, X. Li, Y. Ma, and J. Wright, 
``Robust principal component analysis?" 
\emph{J. ACM }, vol. 58, no. 3, pp. 1–37, 2011.

\bibitem{tomasi1993shape}
C. Tomasi and T. Kanade, 
``Shape and motion from image streams: a
factorization method," 
\emph{Proceed. National Academy of Sciences}, vol. 90, no. 21, pp. 9795–9802, 1993.

\bibitem{meka2008rank}
R. Meka, P. Jain, C. Caramanis, and I. S. Dhillon, 
``Rank minimization
via online learning," 
in \emph{Proceedings of the 25th International Conference on Machine Learning}, 2008, pp. 656–663.

\bibitem{fazel2001rank}
M. Fazel, H. Hindi, and S. P. Boyd, 
``A rank minimization heuristic with
application to minimum order system approximation," 
in \emph{Proceedings of the 2001 American Control Conference.(Cat. No. 01CH37148)}, vol. 6. IEEE, 2001, pp. 4734–4739.

\bibitem{donoho2006compressed}
D. L. Donoho, 
``Compressed sensing," 
\emph{IEEE Trans. Inf. Theory}, vol. 52, no. 4, pp. 1289–1306, 2006.

\bibitem{cai2010singular}
J.-F. Cai, E. J. Candes, and Z. Shen, 
``A singular value thresholding algorithm for matrix completion," 
\emph{SIAM J. Optim.}, vol. 20, no. 4, pp. 1956–1982, 2010.

\bibitem{toh2010accelerated}
K.-C. Toh and S. Yun, 
``An accelerated proximal gradient algorithm for
nuclear norm regularized linear least squares problems," 
\emph{Pacific J. Optim.}, vol. 6, no. 615-640, p. 15, 2010.

\bibitem{zhang2012matrix}
D. Zhang, Y. Hu, J. Ye, X. Li, and X. He, 
``Matrix completion by
truncated nuclear norm regularization," 
in \emph{2012 IEEE Conference on Computer Vision and Pattern Recognition}. IEEE, 2012, pp. 2192–2199.

\bibitem{gu2014weighted}
S. Gu, L. Zhang, W. Zuo, and X. Feng, 
``Weighted nuclear norm
minimization with application to image denoising," 
in \emph{Proceedings of the IEEE conference on computer vision and pattern recognition}, 2014, pp. 2862–2869.

\bibitem{recht2010guaranteed}
B. Recht, M. Fazel, and P. A. Parrilo, 
``Guaranteed minimum-rank
solutions of linear matrix equations via nuclear norm minimization,"
\emph{SIAM Rev.}, vol. 52, no. 3, pp. 471–501, 2010.

\bibitem{tao1997convex}
P. D. Tao and L. H. An, 
``Convex analysis approach to dc programming:
theory, algorithms and applications,"
\emph{Acta Math. Vietn.}, vol. 22, no. 1, pp. 289–355, 1997.

\bibitem{gong2013general}
P. Gong, C. Zhang, Z. Lu, J. Huang, and J. Ye, 
``A general iterative
shrinkage and thresholding algorithm for non-convex regularized optimization problems," 
in \emph{International Conference on Machine Learning}. PMLR, 2013, pp. 37–45.

\bibitem{candes2005decoding}
E. J. Cand{\`e}s and T. Tao, 
``Decoding by linear programming," 
\emph{IEEE Trans. Inf. Theory}, vol. 51, no. 12, pp. 4203–4215, 2005.

\bibitem{ge2020new}
H. Ge, W. Chen, and M. K. Ng, 
``New rip bounds for recovery of sparse
signals with partial support information via weighted $\ell_p$-minimization,"
\emph{IEEE Trans. Inf. Theory}, vol. 66, no. 6, pp. 3914–3928, 2020.

\bibitem{davenport2010analysis}
M. A. Davenport and M. B. Wakin, 
``Analysis of orthogonal matching
pursuit using the restricted isometry property," 
\emph{IEEE Trans. Inf. Theory}, vol. 56, no. 9, pp. 4395–4401, 2010.

\bibitem{chang2014improved}
L.-H. Chang and J.-Y. Wu, 
``An improved rip-based performance guarantee for sparse signal recovery via orthogonal matching pursuit," 
\emph{IEEE Trans. Inf. Theory}, vol. 60, no. 9, pp. 5702–5715, 2014.

\bibitem{candes2010tight}
E. J. Cand{\`e}s and Y. Plan, 
``Tight oracle inequalities for low-rank matrix
recovery from a minimal number of random measurements,"
\emph{IEEE Trans. Inf. Theory}, vol. 57, no. 4, pp. 2342-2359, 2011.

\bibitem{cai2013sparse}
T. T. Cai and A. Zhang, 
``Sparse representation of a polytope and
recovery of sparse signals and low-rank matrices,"
\emph{IEEE Trans. Inf. Theory}, vol. 60, no. 1, pp. 122–132, 2013.

\bibitem{tu2016low}
S. Tu, R. Boczar, M. Simchowitz, M. Soltanolkotabi, and B. Recht,
``Low-rank solutions of linear matrix equations via procrustes flow,"
in \emph{International Conference on Machine Learning}. PMLR, 2016, pp. 964–973.

\bibitem{bhojanapalli2016global}
S. Bhojanapalli, B. Neyshabur, and N. Srebro, 
``Global optimality
of local search for low rank matrix recovery," 
\emph{Adv. Neural Inf. Process. Syst.}, vol. 29, 2016.

\bibitem{liu2022robust}
X. Liu, J. Hou, and J. Wang, 
``Robust low-rank matrix recovery fusing local-smoothness," \emph{IEEE Signal Process. Letters}, vol. 29, pp. 2552–2556, 2022.

\bibitem{krahmer2011new}
F. Krahmer and R. Ward, 
``New and improved johnson–lindenstrauss
embeddings via the restricted isometry property,"
\emph{SIAM J. Math. Anal.}, vol. 43, no. 3, pp. 1269–1281, 2011.

\bibitem{do2011fast}
T. T. Do, L. Gan, N. H. Nguyen, and T. D. Tran, 
``Fast and efficient compressive sensing using structurally random matrices," 
\emph{IEEE Trans. Signal Process.}, vol. 60, no. 1, pp. 139–154, 2011.

\bibitem{cai2020minimization}
Y. Cai, 
``Minimization of the difference of nuclear and frobenius norms for noisy low rank matrix recovery," 
\emph{International Journal of Wavelets, Multiresolution and Information Processing}, vol. 18, no. 02, p. 1950056,
2020.

\bibitem{ma2017truncated}
T.-H. Ma, Y. Lou, and T.-Z. Huang, 
``Truncated $\ell_{1-2}$ models for sparse recovery and rank minimization," 
\emph{SIAM J. Imag. Sci.}, vol. 10, no. 3, pp. 1346–1380, 2017.

\bibitem{zhang2013restricted}
M. Zhang, Z.-H. Huang, and Y. Zhang, 
``Restricted $ p $-isometry properties of nonconvex matrix recovery," 
\emph{IEEE Trans. Inf. Theory}, vol. 59, no. 7, pp. 4316–4323, 2013.

\bibitem{guo2022low}
H. Guo, Z.-H. Huang, and X. Zhang, 
``Low rank matrix recovery with impulsive noise," 
\emph{Appl. Math. Lett.}, vol. 134, p. 108364, 2022.

\bibitem{boyd2004convex}
S. Boyd, S. P. Boyd, and L. Vandenberghe, 
\emph{Convex Optimization}.
Cambridge University Press, 2004.

\bibitem{yao2016fast}
Q. Yao, J. T. Kwok, and X. Guo, 
``Fast learning with nonconvex $\ell_{1-2}$
regularization,"
\emph{arXiv preprint arXiv:1610.09461}, 2016.

\bibitem{candes2011tight}
E. J. Cand{\`e}s and Y. Plan, 
``Tight oracle inequalities for low-rank matrix recovery from a minimal number of noisy random measurements," 
\emph{IEEE Trans. Inf. Theory}, vol. 57, no. 4, pp. 2342–2359, 2011.

\bibitem{cai2013sharp}
T. T. Cai and A. Zhang, 
``Sharp rip bound for sparse signal and low rank matrix recovery," 
\emph{Appl. Comput. Harmon. Anal.},
vol. 35, no. 1, pp. 74–93, 2013.

\end{thebibliography}
\end{document}